\def\marginpar#1{\ignorespaces}
\newtheorem{thm}{Theorem}
\newtheorem{prop}[thm]{Proposition}
\newtheorem{lem}[thm]{Lemma}
\newtheorem{cor}[thm]{Corollary}
\newtheorem{hyp}{Assumption}
\theoremstyle{definition}
\newtheorem{rmq}[thm]{Remark}
\def\eqlabel#1{\def\@currentlabel{#1}}
\def\formula#1{\def\@tempa{#1}\let\@tempb\theequation\def\theequation{%
\hbox{#1}}\def\@currentlabel{(\theequation)}$$}
\def\endformula{\leqno\hbox{(\@tempa)}$$\@ignoretrue\let\theequation\@tempb}
\def\given{\hskip5\p@\relax\vrule\@width.4\p@\hskip5\p@\relax}
\newcommand{\open}[1]{%
\par\normalfont\topsep6\p@\@plus6\p@\trivlist\item[\hskip\labelsep\itshape#1%
\@addpunct{.}]\ignorespaces}
\DeclareRobustCommand{\close}[1]{%
  \ifmmode % if math mode, assume display: omit penalty etc.
  \else \leavevmode\unskip\penalty9999 \hbox{}\nobreak\hfill
  \fi
  \quad\hbox{$#1$}}
\newlength{\toskip}\settowidth{\toskip}{(\theequation)}
\renewcommand{\theequation}{\thesection.\arabic{equation}}
\numberwithin{equation}{section}
\renewcommand\H{\mathcal{H}}
\newcommand{\RR}{{\mathbb R}}
\newcommand{\dd}{\text{d}}
\newcommand{\Div}{\mathop{\rm Div\,}\nolimits}
\newcommand{\Rom}[1]{\uppercase\expandafter{\romannumeral #1\relax}}
\newcommand{\rom}[1]{\lowercase\expandafter{\romannumeral #1\relax}}
\newtheorem*{assumption*}{\assumptionnumber}
\providecommand{\assumptionnumber}{}
\begin{document}

\title[Hypocoercivity and Hypoellipticity in $H^k$ spaces]{Hypocoercivity and global hypoellipticity for the kinetic Fokker-Planck equation in $H^k$ spaces}

\author[Chaoen Zhang]{\textbf{\quad {Chaoen} Zhang \, \, }}
\address{{\bf {Chaoen} ZHANG}\\  Institute for Advanced Study in Mathematics, Harbin Institute of Technology,
NO.92 West Da Zhi St. Harbin, China.} \email{chaoenzhang@hit.edu.cn}

\maketitle

\begin{abstract}
	The purpose of this paper is to extend the hypocoercivity results for the kinetic Fokker-Planck equation in $H^1$ space in Villani's memoir \cite{Villani} to higher order Sobolev spaces. As in the $L^2$ and $H^1$ setting, there is lack of coercivity in $H^k$ for the associated operator. To remedy this issue, we shall modify the usual $H^k$ norm with certain well-chosen mixed terms and with suitable coefficients which are constructed by induction on $k$. In parallel, a similar strategy but with coefficients depending on time (c.f. \cite{Herau}), usually referred as H\'erau's method, can be employed to prove global hypoellipticity in $H^k$. The exponents in our regularity estimates are optimal in short time. Moreover, as in our recent work \cite{GLWZ}, the general results here can be applied in the mean-field setting to get estimates independent of the dimension; in particular, an application to the Curie-Weiss model is presented.
\end{abstract}

\textit{ Key words : Kinetic Fokker-Planck equation, Langevin equation, Convergence to equilibrium, Hypocoercivity, Hypoellipticity, Poincar\'e inequality, Curie-Weiss model.}
\bigskip

\tableofcontents

\section{Introduction}
\label{SectIntr}
We are concerned in this paper with the hypocoercivity and global hypoellipticity of the kinetic Fokker-Planck equation which takes the form
\begin{equation}\label{Eq-kFP}
\frac{\partial f}{\partial t} + v\cdot\nabla_x f - \nabla_xV(x)\cdot\nabla_vf= \Delta_v f + \nabla_v\cdot(vf), \quad t\geq 0
\end{equation}
subject to the initial condition $f(0,x,v)=f_0(x,v)$, where the unknown function $f:=f_t(x,v):=f(t,x,v)$ stands for the density function at time $t$ with position $x\in \RR^d$ and velocity $v\in \RR^d$, and the function $V=V(x):\RR^d\to \RR$ is a smooth potential. We shall always assume that $\int e^{-V(x)}\dd x<\infty$ and thus the kinetic Fokker-Planck equation admits a unique invariant probability measure
\begin{equation}\label{mu}
\dd\mu(x,v)= \frac1{Z_0} e^{-V(x)-\frac{|v|^2}{2}}\dd x\dd v
\end{equation}
where $Z_0= \iint e^{-V(x)-\frac{|v|^2}{2}}\dd x\dd v$ is the normalizing constant.
\medskip

The evolution \eqref{Eq-kFP} preserves mass and positivity. Assume that the initial datum $f_0$ is a probability density function, then, by Ito's formula, $f_t(x,v)$ is the law of a Langevin diffusion process $(X_t, Y_t)_{t\geq 0}$ on $\RR^d\times\RR^d$ evolving according to the stochastic differential equation
\begin{equation}\label{Eq-kFPsde}
\left\{\begin{array}{l}
dX_t=Y_tdt\\
dY_t=-Y_tdt-\nabla V(X_t)dt+\sqrt{2}dB_t
\end{array}\right.
\end{equation}
with the initial law of $(X_0,Y_0)$ being $f_0(x,v)$, where $(B_t)_{t\geq 0}$ is a standard Brownian motion on $\RR^d$.
\medskip

Motivated by the regularizing effect of Kolmogorov's fundamental solution, L. H\"{o}rmander established his celebrated hypoellipticity theorem for second order differential equations in his seminal 1967 paper \cite{Hor67}. The general structure he discovered is usually referred to as H\"{o}rmander's bracket condition (or sometimes simply hypoelliptic structure): let $\mathcal{L}$ be a differential operator in the sum-of-squares form in the sense that
\begin{equation}\label{Hormander}
\mathcal{L}=\sum\nolimits_{i=0}^{n}X_i^2+ X_0 +c
\end{equation}
where  $X_0,X_1,\cdots, X_n$ are smooth vector fields in a domain and $c$ is a smooth function, then H\"{o}rmander's bracket condition of rank $r$ is said to be satisfied if the vector fields generated by iterated Lie brackets
\[
\Big\{X_{j_1},\,\, [X_{j_1},X_{j_2}],\, \cdots\,\,\cdots,\,\,  [X_{j_1},[X_{j_2},\cdots,X_{j_r}]\cdots]\Big\}_{j_i=0,1,\cdots,n}
\]
span the whole tangent space at any point. %Very beautiful structure!
H\"{o}rmander's theorem asserts that the operator $\mathcal{L}$ is hypoelliptic whenever H\"{o}rmander's bracket condition holds. It is worth mentioning that J.J.~Kohn simplified H\"omander's proof by applying the theory of pseudo-differential operators (with the price of non-optimal estimates), and that P.~Malliavin gave a probabilistic proof which is the birth of Malliavin calculus.
%advantage of pseudo-differential operators: Kohn's proof is much more flexible and well-adapted for many variants. Hormander: precise!(equivalent....etc.). then in the case of second order differential operators with real analytical coefficients, Hormander's condition is neccesary. (a counterexample with smooth coefficients was constructed by ?? , see Helffer-Nier)

Concerning the kinetic Fokker-Planck equation \eqref{Eq-kFP}, by H\"ormander's hypoellipticity theory, it is the interaction between the transport part and the diffusion part, more precisely the basic identity $[v\cdot\nabla_x,\nabla_v]=-\nabla_x$, that results in regularization in the position variable where ellipticity fails though.

It turns out that the long time behaviour is also connected to such an interaction. %(we refer to \cite[Section 2]{DV01} for an interesting and insightful heuristic discussion in the spirit of Grad).
It is well-known that the diffusion part of \eqref{Eq-kFP} is coercive in velocity (in the sense that it admits a spectral gap in velocity) while the transport part is conservative. This lack of coercivity in position variable leads to a large class of local equilibria.
However, as in hypoellipticity theory, it is again the interaction between the diffusion and the transport that results in exponential (as usually expected) convergence to the global equilibrium. Moreover, this seems to be a common feature of many important spatially inhomogeneous kinetic equations.
Such a phenomenon and then the related methods are called ``hypocoercivity", c.f. C.~Villani's memoir \cite{Villani}. The term was suggested to Villani by T.~Gallay to emphasize the links and analogies with hypoellpticity. For instance, at the simplest level, just like the goal of hypoellipticity is to prove regularity estimates in the absence of ellipticity, the goal of hypocoercivity is to prove exponential decay in the absence of coercivity.

We recall the definition in the simplest case: let $L$ be a densely-defined linear operator on a Hilbert space $H$ equipped with a norm $||\cdot||$, then $L$ is called hypocoercive if there exist some constants $C$ and $\lambda>0$ such that
\[
||e^{-tL}h||\leq Ce^{-t\lambda}||h||,
\]
for all $h\in H$ orthogonal to the space spanned by the global equilibrium. Here the constants $C$ and $\lambda$ are supposed to be quantitative or constructive. It is rather straightforward to extend the previous definition to nonlinear PDEs and other kinds of distances, or to other quantities such as entropy.

Despite their close relations, hypocoercivity is indeed distinct from hypoellipticity (for an example we refer to \cite{Herau06}) and can be proved independently. A general strategy for hypocoercivity, systematically developed by Villani in \cite{Villani}, is that one may add some well-chosen auxiliary terms to the usual Lyapunov functional in order to obtain certain coercivity estimates. This strategy might seem simple, but it turns out to be very powerful in the study of the kinetic Fokker-Planck equation and spatially inhomogeneous Boltzmann equation, as shown in \cite{Villani}, as well as of many other kinetic models.

We refer to \cite{HelN} and \cite{Villani} and the references therein for the results and contributions on hypoelliticity and hypocoercivity. Note that basic well-posedness theorems for the kinetic Fokker-Planck equation can also be found there.
\medskip

Let us review some results in the concern of the present article. More or less at the same time, the rate of convergence for solutions of \eqref{Eq-kFP} or \eqref{Eq-kFPsde} has attracted attention of both probability community and PDE community. On the one hand, Wu \cite{Wu01} has established non-quantitative exponential convergence to equilibrium for the ``stochastic Hamiltonian system" (i.e. general kinetic Langevin diffusion) based on Lyapunov functions and Meyn-Treedie's techniques, and Talay \cite{Tal02} proved the exponential convergence in $L^2$ for the kinetic Langevin diffusion. On the other hand, Desvillettes and Villani \cite{DV01} have proved convergence to equilibrium in entropy with quantitative algebraic rates of decay (indeed $O(t^{-\infty})$).
%and hence in $L^1$ by entropy-entropy-production inequalities, provided some technical assumptions.
Notably their approach applies to the spatially inhomogeneous Boltzmann equation.

Later, the work of H\'erau and Nier \cite{HN04} gives the first quantitative exponential convergence result (excluding those for the special case of  quadratic potentials) for the evolution equation \eqref{Eq-kFP}, see also some further development in the book \cite{HelN} by Helffer and Nier. Their approach is based on Kohn's method of hypoellipticity theory and spectral analysis.

The research of hypocoercivity for kinetic equations, with an emphasis on constructive rates of convergence, was then put much forward by H\'erau, Mouhot, Villani, and many others. In his memoir \cite[Part \Rom{1}]{Villani}, Villani proved exponential convergence in $H^1$, $L^2$ and entropy under some boundedness (relative or not) assumptions on the Hessian of the potential. One of his key ingredients is to take into consideration higher order terms (namely the Dirichlet energy or Fisher information), and to add auxiliary mixed terms of derivatives which help to make advantages of the basic identity $[v\cdot\nabla_x,\nabla_v]=-\nabla_x$. H\'erau \cite{Herau} also obtained exponential decay in $L^2$ under the bounded Hessian assumption. His approach is based on the spectral methods in the earlier work \cite{HN04}. Another approach to hypocoercivity in $L^2$ under similar assumptions was proposed by Dolbeault, Mouhot and Schmeiser in \cite{DMS15} (for previous works, see \cite{MN} and \cite{DMS09}). Based on a micro-macro decomposition, they constructed an auxiliary operator which helps to make good use of the microscopic and macroscopic coercivity.

Meanwhile, global hypoellipticity for the equation \eqref{Eq-kFP} was also initiated by the quoted works of H\'erau and Villani. While Villani's methods (see \cite[A.21]{Villani}) are based on interpolation inequalities and a system of differential inequalities, H\'erau \cite{Herau} devised a very nice functional to study the short time behaviour of the derivatives. His idea was to distinguish the orders of time for the derivatives in velocity and the ones in position. Moreover the regularity estimates of H\'erau and Villani are in terms of global quantities (i.e. integrals involving derivatives in the whole phase space), contrary to earlier works on hypoellipticity. We shall adapt H\'erau's method for global hypoellipticity in higher order Sobolev spaces. Besides, we refer to the work \cite{GW} by Guillin and Wang in which they obtained some local version %(see Cor 3.3 there)
of H\'erau-Villani's global hypoellipticity estimates by coupling methods; see also \cite{Zx} for a related work concerning Bismut formula. We further remark that uniform-in-time hypoellipticity estimates are of interest for the study of kinetic equations. We also call attentions to recent developments on the regularity estimates for general kinetic Fokker-Planck equations, see for instance Mouhot's ICM report \cite{M18ICM} and the references therein.

In addition, we refer to our recent work \cite{CGMZ} for a relaxation of the bounded Hessian condition with the help of weighted logarithmic Sobolev inequalities. We also refer to \cite{Baud}  and \cite{BauGH} by Baudoin and his collaborators for works by local $\Gamma$-calculus. We also mention \cite{Cal} for another reformulation of Villani's argument by the language of Riemannian geometry, and \cite{GrS} for an extension of Dolbeault-Mouhot-Schmeiser's method together with an application to the so-called spherical velocity Langevin operator. %hibertian setting, with domain issues taken care of....

Moreover, hypocoercivity can be measured in many different ways, such as entropy, $\varphi$-entropies, total variation, Wasserstein distance, $L^2$ norms, $H^k$ norms, or more general weighted Sobolev norms, see for instance \cite{BCG08}, \cite{BGM10}, \cite{AAS}, \cite{MM16}, \cite{CGMZ}, \cite{DL18}, \cite{Baud}, \cite{EGZ17}(see also related work \cite{Eberle} and \cite{EGZ16}), \cite{E} and the references therein. %Among them we highlight a nice probabilistic approach in the work \cite{EGZ17} of Eberle, Guillin and Zimmer. They constructed a clever combination of synchronous and reflection coupling to demonstrate the exponential convergence in a Wasserstein semi-metric with an explicit rate.
We also call attentions to \cite{HW}, \cite{DEH}, \cite{BDMMS}, \cite{BDS}, \cite{BDLS}, \cite{Cao}, \cite{Cao19} and \cite{GLWZ} for some recent results concerning hypocoercivity with various kinds of potentials. In particular, we highlight \cite{BauGH} and \cite{HM} for some results concerning singular potentials.

\medskip
The purpose of the present article is twofold. ($\rom{1}$) Concerning the long time behaviour, we extend Villani's hypocoercivity theorem in $H^1(\mu)$ to the setting of $H^k(\mu)$, where $k\geq 1$ is an arbitrary integer. For this we introduce a mixed term involving the derivatives $ \nabla_x^k $ and $\nabla_x^{k-1}\nabla_v$ and modify the coefficients in the usual $H^k(\mu)$-semi-norm. ($\rom{2}$) Concerning the short time behaviour, we present higher regularity estimates with optimal exponents in the form of H\'erau-Villani's global hypoellipticity estimates. For this we adapt H\'erau's method and hence use time-dependent coefficients. %The general strategy will be explained in section $2$.
\medskip

\textbf{Plan of the paper.}
We introduce our main results and the notations in Section ~2. The Sections ~3 is devoted to the estimates needed for the proof of our main results. Then the proof of our hypocoercivity result (Theorem \ref{thmHigh}) is presented in Section ~4, while the global hypoellipticity result (Theorem \ref{thmReg}) is proved in section ~5. The proof of an application to the Curie-Weiss model can be found in section ~6. The optimality of the exponents in Theorem \ref{thmReg} is shown in the Appendix A. Some technical lemma concerning quadratic forms is proven in the Appendix B.

\section{Main results and notations}
\label{SectMain}
Let us introduce the basic framework we shall work within. It might be convenient to consider the density function with respect to the invariant measure $\mu$, i.e. the function $$h(t,x,v)= Z_0f(t,x,v) e^{V(x)+\frac{|v|^2}{2}}$$
(see \eqref{mu} for $\mu$ and $Z_0$). Then the evolution equation \eqref{Eq-kFP} becomes
\begin{equation}\label{Eq-kFP*}
\partial_t h + Lh=0
\end{equation}
with $L$ (here and henceforth) being
\[
Lh=-\Delta_v h + v\cdot\nabla_vh +v\cdot\nabla_x h - \nabla_xV(x)\cdot\nabla_vh.
\]
The weighted square-integrable space $L^2(\mu)$, with the norm $||h||_{L^2(\mu)}^2= \int h^2\dd\mu$, is standard for the study of kinetic Fokker-Planck equation. One of the advantages is that the operator $-\Delta_v + v\cdot\nabla_v$ is symmetric in $L^2(\mu)$, while $v\cdot\nabla_x - \nabla_xV(x)\cdot\nabla_v$ is anti-symmetric. Actually, if we set $A:=\nabla_v$, $B:=v\cdot\nabla_x - \nabla_xV(x)\cdot\nabla_v$, then $L$ may be rewritten in H\"ormander's form of a sum of squares
\[
L=A^*A+B
\]
where $A^*$ is the dual operator of $A$ in $L^2(\mu)$. Here we remark that $X^*=-X + g$ for any vector field $X$ and some function $g$, as compared to \eqref{Hormander}. In this article we are mainly concerned with higher order weighted Sobolev spaces $H^k(\mu)$ (with $k$ being a positive integer), of which the notations are specified later in this section.
\medskip

Let us briefly recall Villani's hypocoercive method in the setting of kinetic Fokker-Planck equations. As mentioned in the introduction, the dissipative part in \eqref{Eq-kFP*}, namely the diffusion operator $-\Delta_v+ v\cdot\nabla_v$, only acts on velocity variable and is degenerate in the $x$-directions. This leads to the lack of coercivity in the $x$-variable. It may be illustrated by the computation below,
\begin{align*}
	&-\frac12\frac{\dd }{\dd t} \int h^2\dd \mu = \int |\nabla_v h|^2 \dd\mu,\\
	&-\frac{\dd }{\dd t} \int h\log h\dd \mu = \int \frac{|\nabla_v h|^2}{h}\dd\mu
\end{align*}
where $h$ in the latter equality is assumed to be a positive solution. As one can see, the entropy production is only in the $v$-directions and hence the evolution possesses a large class of local equilibria. This is very different from the Fokker-Planck equation where the entropy production is in all the directions and so one can apply entropy-entropy-production inequalities to deduce quantitative convergence to equilibrium (c.f. \cite{BGL14} or \cite{AMTU}).

To remedy this difficulty, Villani devised certain carefully-chosen Lyapunov functionals for the evolution equation \eqref{Eq-kFP*}. More precisely, he constructed
\begin{enumerate}
	\item in the Hilbertian setting ($L^2$ or $H^1$)
	\[
	((h,h))_{H^1}:=\int h^2 \dd\mu + a \int |\nabla_v h|^2\dd\mu + 2b \int \nabla_vh\cdot\nabla_x h\dd\mu + c \int |\nabla_x h|^2\dd\mu;
	\]
	\item in the entropic setting
	\[
	\int h\log h \dd\mu + a \int \frac{|\nabla_v h|^2}{h}\dd\mu + 2b\int \frac{ \nabla_vh\cdot\nabla_x h}{h}\dd\mu + c \int \frac{|\nabla_x h|^2}{h}\dd\mu
	\]
\end{enumerate}
with some appropriate constants $a,b,c$. The introduction of the mixed terms $\int \nabla_vh\cdot\nabla_x h\dd\mu$ and $\int \frac{ \nabla_vh\cdot\nabla_x h}{h}\dd\mu$ turned out very helpful in the analysis of the long time behaviour: thanks to the commutation relation $[v\cdot\nabla_x,\nabla_v]=-\nabla_x$, it can be used to produce entropy dissipation in the missing $x$-direction.
%The introduction of second order temporal derivative of entropy (i.e. dissipation of entropy dissipation), which goes back to the seminal work of Bakry-\'Emery, has shown its strength already in Desvillettes-Villani's work \cite{DV01}.
%The basic identity $[v\cdot\nabla_x,\nabla_v]=-\nabla_x$ has been intensively used in Herau-Nier's work \cite{HN04} from the hypoelliptic perspective.
%Adding auxiliary terms involving mixed derivatives was inspired by works of Yan GUO and Denis Talay. It seems mixed terms between $x$ and $v$ have been used in many constructions of Lyapunov function in stochastic processes, see for instance \cite{Wu01} and \cite{Tal02} (maybe there are more...)

Let us see what happens in the $H^1$ setting. According to the proof of \cite[Theorem 18, Theorem 35]{Villani}, it holds
\begin{align*}
	% \nonumber % Remove numbering (before each equation)
	((h,Lh))_{H^1} =\, & ||\nabla_v h||^2 + a(||\nabla_v^2h||^2 + ||\nabla_vh||^2+\langle \nabla_vh,\nabla_x h\rangle)  \notag\\
	& +b(2 \langle \nabla^2_vh,\nabla^2_{xv}h\rangle + \langle \nabla_vh,\nabla_x h\rangle+||\nabla_x h||^2 - \langle \nabla_vh,  \nabla^2V\cdot\nabla_v h\rangle) \notag\\
	& +c(||\nabla^2_{xv}h||^2- \langle\nabla_xh,\nabla^2V\cdot\nabla_vh\rangle)
\end{align*}
where $||\cdot||$ and $\langle\cdot,\cdot\rangle$ standard for the norm and inner product in $L^2(\mu)$, c.f. subsection 2.3. Then it is clear that the missing Dirichlet energy in the $x$-direction appears in the temporal derivative of the mixed term  $\langle\nabla_vh,\nabla_x h\rangle$. When $\nabla^2V$ is relatively bounded in the sense of $\eqref{IneqV2}$(see below), the constants $a, b, c$ can be chosen such that $((h,Lh))_{H^1} \geq \lambda' ||h||^2_{\dot{H}^1}$ for some $\lambda'>0$.

Now we recall the Poincar\'e inequality which is somewhat standard for the study of long time behaviour.

\begin{hyp}\label{HypPI}
	Denote $Z_1= \int e^{-V(x)}\dd x$. Assume that the measure $\dd\nu(x):=\frac{1}{Z_1}e^{-V(x)}\dd x$ satisfies a Poincar\'e inequality with constant $\kappa$, i.e. it holds
	\[
	\int \left(g-\int g\dd\nu\right)^2\dd\nu \leq \kappa \int |\nabla g|^2\dd\nu
	\]
	for all functions $g=g(x)\in H^1(\nu)$.
\end{hyp}
This assumption implies the equivalence of the $H^1(\nu)$-norm and the $H^1(\nu)$-seminorm $\|\nabla g\|_{L^2(\nu)}$. Note that the gaussian distribution of the velocity also satisfies the Poincar\'e inequality with $\kappa=1$, we then know the equivalence of the $H^1(\mu)$-norm and the $H^1(\mu)$-seminorm, by the tensorization property of Poincar\'e inequalities. We refer to the monograph \cite[Chapter 4]{BGL14} by Bakry, Gentil and Ledoux for more on Poincar\'e inequalities. See also \cite{BBCG} for some criteria and a simple proof of Poincar\'e inequalities for a very general class of probability measures. 

In particular, whenever $ac>b^2$, the $H^1(\mu)$-seminorm is equivalent to the norm defined by $((\cdot,\cdot))_{H^1}$ due to the Poincar\'e inequality.  By the Gronwall lemma, one can conclude exponential decay in $((\cdot,\cdot))_{H^1}$. Finally one arrives at Villani's hypocoercivity theorem in $H^1$ for the kinetic Fokker-Planck equation,

\begin{thm}(\cite[Theorem 35]{Villani})\label{thmVillani}
	Suppose the Assumption \ref{HypPI} holds. Assume furthermore that the  potential $V\in C^\infty(\RR^d)$ satisfies
	\begin{equation}\label{IneqV2}
		\int |\nabla^2V\cdot \nabla_v g|^2\dd\mu \leq M\bigg(\int |\nabla_v g|^2\dd\mu + \int |\nabla_{xv}^2 g|^2\dd\mu\bigg),
	\end{equation}
	for all $g\in H^2(\mu)$. Then there exist explicitly computable constants $C$ and $\lambda>0$ such that
	\begin{equation}\label{EqThmH2}
		||h_t-\int h_0\dd\mu||_{H^1(\mu)}\leq Ce^{-\lambda t} ||h_0-\int h_0\dd\mu||_{H^1(\mu)}, \quad \quad \forall t\geq 0
	\end{equation}
	where $h_t:=h(t,x,v)$ is the solution to the kinetic Fokker-Planck equation with the initial condition $h_0\in H^1(\mu)$. The constants $C$ and $\lambda$ only depends on $\kappa$ and $M$.
\end{thm}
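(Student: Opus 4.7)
The approach follows the Lyapunov-functional blueprint sketched immediately above the statement. Denote $\bar h := \int h_0\,\dd\mu$, which is preserved by the flow since $L$ annihilates constants. Consider
\[
\Phi(h) := \|h - \bar h\|^2 + a\|\nabla_v h\|^2 + 2b\,\langle \nabla_v h, \nabla_x h\rangle + c\|\nabla_x h\|^2,
\]
with constants $a,b,c>0$ to be fixed with $ac>b^2$. Under that condition the $2\times 2$ matrix $\bigl[\begin{smallmatrix}a & b\\ b & c\end{smallmatrix}\bigr]$ is positive definite, so pointwise $a|u|^2 + 2b\,u\cdot v + c|v|^2 \ge \delta(|u|^2+|v|^2)$ for some $\delta=\delta(a,b,c)>0$; this directly gives the norm equivalence $\Phi(h) \asymp \|h-\bar h\|_{H^1(\mu)}^2$.

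Along the evolution $\partial_t h + Lh = 0$, differentiating yields $\frac{\dd}{\dd t}\Phi(h_t) = -2((h_t, Lh_t))_{H^1}$, and the identity for $((h, Lh))_{H^1}$ displayed in the excerpt is the main computational input. I would take it as given; its derivation is the standard integration-by-parts computation using $L = A^*A + B$ together with the commutation $[B,\nabla_v] = -\nabla_x$.

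The coercivity estimate is the heart of the matter. View the right-hand side of that identity as a quadratic expression in the non-negative quantities $X=\|\nabla_v h\|$, $Y=\|\nabla_v^2 h\|$, $W=\|\nabla_{xv}^2 h\|$, $Z=\|\nabla_x h\|$. The manifestly favorable squares are $(1+a)X^2 + aY^2 + bZ^2 + cW^2$. The cross scalar products $(a+b)\langle \nabla_v h, \nabla_x h\rangle$ and $2b\langle \nabla_v^2 h, \nabla_{xv}^2 h\rangle$ are dominated in absolute value by $(a+b)XZ$ and $2bYW$ via Cauchy–Schwarz, then absorbed into the favorable squares by Young's inequality—respectively using the smallness of $b$ against $a$, and the condition $b^2<ac$. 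The Hessian-of-$V$ contributions $-b\langle \nabla_v h, \nabla^2 V\cdot\nabla_v h\rangle$ and $-c\langle \nabla_x h, \nabla^2 V\cdot\nabla_v h\rangle$ are first controlled by Cauchy–Schwarz, then by assumption \eqref{IneqV2} which replaces $\|\nabla^2V\cdot\nabla_v h\|$ by $\sqrt{M}\sqrt{X^2+W^2}$, yielding total error bounds of the form $(b+c)\sqrt{M}\,\sqrt{X^2+W^2}\,(X+Z)$; another Young inequality absorbs them into $X^2, Z^2, W^2$. Choosing the parameters in the hierarchy $b \ll a \ll 1$ with $c = \theta\,b^2/a$ for some $\theta>1$ (so that $ac>b^2$), and the Young weights proportional to $b$, makes every error term smaller than half of the corresponding favorable square, yielding
\[
((h,Lh))_{H^1} \ge \lambda'\bigl(\|\nabla_v h\|^2 + \|\nabla_x h\|^2\bigr)
\]
for an explicit $\lambda'>0$ depending only on $M$.

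Finally, since $h_t-\bar h$ has zero $\mu$-mean for every $t$, Assumption \ref{HypPI} tensorized with the unit-Gaussian Poincar\'e inequality in the velocity gives $\|h_t - \bar h\|^2 \le \max(\kappa,1)\bigl(\|\nabla_x h_t\|^2 + \|\nabla_v h_t\|^2\bigr)$. Combining with the coercivity bound and the norm equivalence of the first step gives $((h_t,Lh_t))_{H^1} \ge \lambda\,\Phi(h_t)$ for some $\lambda>0$ depending only on $\kappa$ and $M$, whence $\frac{\dd}{\dd t}\Phi(h_t) \le -2\lambda\,\Phi(h_t)$; Gronwall then yields $\Phi(h_t) \le e^{-2\lambda t}\Phi(h_0)$, and reverting to the $H^1(\mu)$-norm via the first paragraph's equivalence establishes \eqref{EqThmH2}. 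The principal obstacle is the simultaneous balancing of $a,b,c$ and the Young weights so that every cross term and every $\nabla^2V$-contribution is absorbed by the favorable squares while preserving $ac>b^2$; the rest is bookkeeping.
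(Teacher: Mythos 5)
Your proposal reproduces the argument the paper sketches (attributed to Villani): the twisted $H^1$ norm with the mixed term $\langle\nabla_v h,\nabla_x h\rangle$, the displayed identity for $((h,Lh))_{H^1}$ as the main computational input, absorption of the cross and Hessian terms into the favorable squares via Cauchy--Schwarz/Young under the constraint $ac>b^2$, and finally Poincar\'e plus Gronwall. The bookkeeping you defer is indeed just bookkeeping; for concreteness, the coefficient choice used in the paper's inductive construction (Step 1 of the proof of Proposition~\ref{PropCoerHk} with $k=1$: $a=64Mb$, $c=b/(16M)$, $b$ small depending on $M$) is one explicit instance of the hierarchy you describe.
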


The inequality \eqref{IneqV2} is slightly different from the one in the original statement of Villani's results \cite{Villani} where \eqref{IneqV2} is verified by the assumption that there exists some positive constant $C$ such that
\[
|\nabla^2 V|\leq C(1+ |\nabla V|).
\]
But the formulation \eqref{IneqV2} is of interest for the generality and flexibility. One of the advantages in this formulation is that the constant $M$ can be independent of the number of particles in the mean-field setting, see for instance in our recent work \cite{GLWZ}. Moreover, it is a special case of the following  boundedness assumption which will be used in our main results:
\begin{hyp}\label{HypBHess}
	There exists some constant $M$ such that
	\begin{equation}\label{IneqVk}
		\int |\nabla_x^{l}V\cdot\nabla_vg|^2 \dd\mu\leq M \bigg(\int |\nabla_vg|^2\dd\mu + \int |\nabla^2_{xv}g|^2\dd\mu\bigg)
	\end{equation}
	for $2\leq l\leq k+1$ and any function $g\in H^2(\mu)$, where
	\[
	|\nabla_x^{l}V\cdot\nabla_vg|^2 := \sum\limits_{ |\alpha|=l-1} \Big|\sum\limits_{j=1}^{d} (D^{\alpha}_x \partial_{x_j}V(x)) \partial_{v_j}g\Big|^2.
	\]
	with $\alpha$ a multi-index in the partial derivative  $D^{\alpha}_x$ in the $x$-variable.
\end{hyp}

This assumption holds trivially for instance whenever the partial derivatives of $V$ of order 2 or more are uniformly bounded, which is the assumption imposed in \cite{Villani}, \cite{Herau}, and \cite{HT16}.

Roughly speaking, Assumption \ref{HypBHess} requires relative boundedness of $\nabla^lV$ ($2\leq l\leq k+1$) as operators; and the inequalities  \eqref{IneqVk} can also be viewed as weighted Poincar\'e inequalities. One can develop Lyapunov type conditions to ensure the validity of such inequalities.
\medskip

We also recall the global hypoellipticity estimates of H\'erau-Villani for the kinetic Fokker-Planck equation.
\begin{thm}[H\'erau\cite{Herau}, Villani\cite{Villani}]
	Let $V$ be a smooth potential satisfying Assumption \ref{HypBHess} with $l=2$. Then there exist constant $C>0$ such that
	\[
	\int |\nabla_x h|^2 \dd\mu \leq Ct^{-3} \int h^2_0 \dd\mu,
	\quad \int |\nabla_v h|^2 \dd\mu \leq Ct^{-1} \int h^2_0 \dd\mu, \quad \quad \mbox{for } 0<t\leq 1
	\]
	for any solution $h_t:=h(t,x,v)$ to \eqref{Eq-kFP*} with initial datum $h_0\in L^2(\mu)$.
\end{thm}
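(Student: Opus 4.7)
My strategy is to employ H\'erau's time-weighted functional, in the spirit sketched after the statement of Theorem~\ref{thmVillani}, but with powers of $t$ chosen to reflect the different parabolic scaling of $v$-derivatives (order $1$ in $t$) and $x$-derivatives (order $3$ in $t$). Set
\[
F(t):= \int h^2\,\dd\mu + \alpha t\int|\nabla_v h|^2\,\dd\mu + 2\beta t^2 \int \nabla_v h\cdot\nabla_x h\,\dd\mu + \gamma t^3 \int|\nabla_x h|^2\,\dd\mu,
\]
with positive constants $\alpha,\beta,\gamma$ to be fixed so that $\alpha\gamma>\beta^2$ (to ensure $F$ controls the weighted gradients) and so that $\dd F/\dd t$ is non-positive on $(0,1]$. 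The first step is to differentiate $F$ along the flow of $L$ using the commutator identities $[\nabla_v,L]=\nabla_v+\nabla_x$ and $[\nabla_x,L]=-\nabla_x^2V\cdot\nabla_v$ (which come from $[v\cdot\nabla_x,\nabla_v]=-\nabla_x$), together with the standard dissipation identity $\langle \varphi,L\varphi\rangle=\|\nabla_v\varphi\|^2$ coming from the $A^*A+B$ decomposition. This yields six negative structural terms (including $-2\alpha t\|\nabla_v^2 h\|^2$, $-2\gamma t^3\|\nabla_{xv}^2 h\|^2$, and crucially $-2\beta t^2\|\nabla_x h\|^2$ coming from the mixed bracket), plus positive terms from the time derivatives of the coefficients ($\alpha\|\nabla_v h\|^2$, $4\beta t\langle\nabla_v h,\nabla_x h\rangle$, $3\gamma t^2\|\nabla_x h\|^2$) and the two Hessian terms involving $\nabla_x^2V\cdot\nabla_v h$.

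The second step is to absorb the bad terms. The term $-2\|\nabla_v h\|^2$ (from $\dd/\dd t\|h\|^2_{L^2(\mu)}$) dominates $\alpha\|\nabla_v h\|^2$ if $\alpha<2$. The cross product $\langle\nabla_v h,\nabla_x h\rangle$ is treated by Young's inequality, absorbing small pieces into $\|\nabla_v h\|^2$ and $\|\nabla_x h\|^2$; since its overall coefficient is $O(t)$ while the good $\|\nabla_x h\|^2$ coefficient is $O(t^2)$, for $t\in(0,1]$ this is manageable provided $\beta$ is chosen large enough compared with the residual contributions. The off-diagonal $-4\beta t^2\langle\nabla_v^2 h,\nabla_{xv}^2 h\rangle$ is absorbed via Young into $-\alpha t\|\nabla_v^2 h\|^2$ and $-\gamma t^3\|\nabla_{xv}^2 h\|^2$, which forces the quantitative condition $\alpha\gamma>2\beta^2$. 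Finally, the two Hessian terms are estimated by Assumption~\ref{HypBHess} with $l=2$, which gives
\[
|\langle\nabla_v h,\nabla_x^2V\cdot\nabla_v h\rangle|+|\langle\nabla_x h,\nabla_x^2V\cdot\nabla_v h\rangle| \leq \varepsilon\bigl(\|\nabla_v h\|^2+\|\nabla_x h\|^2\bigr) + C_\varepsilon M\bigl(\|\nabla_v h\|^2+\|\nabla_{xv}^2 h\|^2\bigr);
\]
multiplied by $2\beta t^2$ and $2\gamma t^3$ respectively, and since $t\leq 1$, these are dominated by the negative structural terms once $\beta,\gamma$ are small enough relative to $\alpha$. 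A standard choice is $\alpha=\eta$, $\beta=\eta^2$, $\gamma=\eta^3$ with $\eta>0$ sufficiently small depending only on $M$.

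With the coefficients so chosen, $\dd F/\dd t\leq 0$ on $(0,1]$, hence $F(t)\leq F(0^+)=\|h_0\|_{L^2(\mu)}^2$ because the $t$-weighted terms vanish in the limit $t\to 0^+$. The completion-of-squares identity
\[
\alpha t\|\nabla_v h\|^2 + 2\beta t^2\langle\nabla_v h,\nabla_x h\rangle + \gamma t^3\|\nabla_x h\|^2 = \alpha t\Bigl\|\nabla_v h+\tfrac{\beta t}{\alpha}\nabla_x h\Bigr\|^2 + \Bigl(\gamma-\tfrac{\beta^2}{\alpha}\Bigr)t^3\|\nabla_x h\|^2
\]
then gives both $t\|\nabla_v h\|^2\leq C\|h_0\|_{L^2(\mu)}^2$ and $t^3\|\nabla_x h\|^2\leq C\|h_0\|_{L^2(\mu)}^2$, which are the desired estimates.

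\textbf{Main obstacle.} The delicate point is not any single estimate but the simultaneous choice of $\alpha,\beta,\gamma$: one needs $\alpha\gamma>2\beta^2$ for absorption of $-4\beta t^2\langle\nabla_v^2 h,\nabla_{xv}^2 h\rangle$ into the diagonal good terms, yet $\beta$ must remain large enough relative to $\gamma$ so that the $-2\beta t^2\|\nabla_x h\|^2$ hypocoercive gain beats the $+3\gamma t^2\|\nabla_x h\|^2$ produced by differentiating the time weight, while at the same time keeping $\alpha,\beta,\gamma$ small enough that the Hessian terms controlled via Assumption~\ref{HypBHess} do not spoil the sign. The geometric scaling $(\alpha,\beta,\gamma)=(\eta,\eta^2,\eta^3)$ is designed precisely to make all these inequalities compatible for $\eta$ small; verifying that a single $\eta$ depending only on $M$ works for every $t\in(0,1]$ is where the arithmetic bookkeeping concentrates.
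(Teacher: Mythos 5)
Your plan reproduces the method the paper itself sketches (and implements in full generality in Proposition~\ref{propReg}, whose $k=1$ case is precisely this theorem): differentiate H\'erau's time-weighted functional along the flow, use the commutator identities to extract the hypocoercive gain $-2\beta t^2\|\nabla_x h\|^2$ from the mixed term, absorb the off-diagonal and Hessian contributions via Cauchy--Schwarz and Assumption~\ref{HypBHess}, and conclude by monotonicity and the vanishing of the weighted terms at $t=0^+$. All the structural identities you list are correct, and the step where you match powers of $t$ in the Young inequalities (so that, for instance, $2\beta t^2\sqrt{M}\,\|\nabla_v h\|\,\|\nabla^2_{xv}h\|$ is split against $\gamma t^3\|\nabla^2_{xv}h\|^2$ and $(\beta^2 M/\gamma)\,t\,\|\nabla_v h\|^2$) is the right bookkeeping.

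There is, however, a genuine arithmetic slip in the final coefficient choice. You correctly state that the construction requires $\alpha\gamma>\beta^2$ (and, with room for the off-diagonal $-4\beta t^2\langle\nabla_v^2 h,\nabla^2_{xv}h\rangle$, something strictly larger); but the choice $(\alpha,\beta,\gamma)=(\eta,\eta^2,\eta^3)$ gives $\alpha\gamma=\eta^4=\beta^2$ exactly. With this choice the completion-of-squares identity you write down has prefactor $\gamma-\beta^2/\alpha=0$ on $t^3\|\nabla_x h\|^2$, so the functional no longer controls $\|\nabla_x h\|^2$ and the final step collapses. The fix is trivial --- take, e.g., $\beta=\eta^2/4$, or keep $\beta=\eta^2$ and enlarge $\gamma$ to $c\eta^3$ with $c>4$, so that $\alpha\gamma\geq 4\beta^2$. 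For comparison, the paper's general construction at $k=1$ uses the ratios $\alpha:\beta:\gamma = 64M:1:(16M)^{-1}$, which gives exactly $\alpha\gamma=4\beta^2$, precisely to leave this slack. Once the coefficients are tilted off the degenerate ray, your argument closes as written.
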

To this end, H\'erau devised a clever Lyapunov functional
\[
\int h^2 \dd\mu + at\int |\nabla_v h|^2\dd\mu + 2bt^2 \int \nabla_vh\cdot\nabla_x h\dd\mu + ct^3 \int |\nabla_x h|^2\dd\mu
\]
with suitable constants $a,b,c$. Then it can be shown that this functional is monotonic in time from which one can conclude the results by standard approximation.

The key point here is to notice the different orders of time of the gradients in different directions. In fact, the choice of the coefficients $t, t^2, t^3$ is not innocent at all. They are related to the intrinsic structure of the kinetic Fokker-Planck equation. As one can see in the appendix A, the global hypoelliptic estimates above are optimal in certain sense. We shall follow H\'erau's construction for the global hypoelliptic estimates in $H^k(\mu)$.
\medskip

\subsection{Main results}
Now we turn to present the main results. Our strategy for the hypocoercivity and hypoellipticity results in $H^k$ closely follows the line of Villani's proof and H\'erau's method. To remedy the degeneracy at the $x$-direction, we introduce a mixed term $ \langle\nabla^{l-1}_x\nabla_v h,\nabla^l_x h \rangle$ (see the subsection 2.3 for the notations) in the $H^l(\mu)$-seminorm for each $1\leq l\leq k$. These mixed terms play the same role as the one in Villani's argument: their temporal derivatives will give the missing pattern $||\nabla^l_xh||^2$ in the temporal derivative of the usual $H^l(\mu)$-seminorm, and so they provide great help to get coercive estimates or monotonicity.
\medskip

Concerning the global hypoellipticity estimates, we have
\begin{thm}\label{thmReg}
	Suppose Assumption \ref{HypBHess} holds. Then there exist explicitly computable constants $C$, depending only on $k$ and $M$, such that
	\begin{equation}\label{EqThm}
	||\nabla_x^{l}\nabla_v^{k-l}h_t||_{L^2(\mu)}\leq Ct^{-(\frac{k}{2}+l)} ||h_0||_{L^2(\mu)}, \quad \mbox{ for } 0<t\leq 1,
	\end{equation}
	where $h_t:=h(t,x,v)$ is the solution to the kinetic Fokker-Planck equation \eqref{Eq-kFP*} with initial datum $h_0\in L^2(\mu)$. Here $||\nabla_x^{l}\nabla_v^{k-l}h||_{L^2(\mu)}$ is some Hilbert-Schmidt norm defined by
	\[
	||\nabla_x^{l}\nabla_v^{k-l}h||^2_{L^2(\mu)} := \sum\limits_{|\alpha|= i, |\beta|=j} \int |D_x^{\alpha}D_v^{\beta}h|^2\dd\mu.
	\]
	Moreover, the exponent $\frac{k}{2}+l$ in the estimate \eqref{EqThm} is sharp.
\end{thm}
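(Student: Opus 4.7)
The plan is to adapt H\'erau's time-dependent Lyapunov functional from \cite{Herau} to the setting of $H^k$. The goal is to construct, for each integer $k \geq 1$, a functional $\Phi_k(t, h)$ depending polynomially on $t$ and quadratically on $h$, that is non-increasing along the flow of \eqref{Eq-kFP*} and whose principal pure contributions at time $t$ are precisely the quantities $t^{k+2l}\|\nabla_x^l \nabla_v^{k-l} h\|_{L^2(\mu)}^2$ for $0 \leq l \leq k$. Once this is achieved, the chain $\Phi_k(t,h_t) \leq \Phi_k(0,h_0) = \|h_0\|_{L^2(\mu)}^2$ yields \eqref{EqThm} directly, since all time-weighted terms vanish at $t=0$. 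I would argue by induction on $k$: the base $k=0$ is the standard $L^2(\mu)$-contractivity, and $k=1$ recovers H\'erau's original estimate.

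Concretely, I would take
\[
\Phi_k(t,h) = \|h\|_{L^2(\mu)}^2 + \sum_{j=1}^k \sum_{l=0}^{j} a_{j,l}\, t^{j+2l} \|\nabla_x^l \nabla_v^{j-l} h\|^2 + \text{(mixed terms)},
\]
where the mixed terms are of the form $b_{j,l}\, t^{j+2l-1}\langle \nabla_x^{l-1}\nabla_v^{j-l+1} h, \nabla_x^l \nabla_v^{j-l} h\rangle$, carrying the geometric-mean time exponent of the two adjacent pure slots. The exponent $j+2l = (j-l)+3l$ encodes the parabolic scaling in velocity combined with the fact that each $x$-derivative costs three powers of $t$ via the hypoelliptic commutator $[\nabla_v, v\cdot\nabla_x]=\nabla_x$, exactly as in H\'erau's original functional.

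The core step is to differentiate $\Phi_k(t,h_t)$ using $\partial_t D_x^\alpha D_v^\beta h = -D_x^\alpha D_v^\beta Lh$ and iterating the basic commutators $[\nabla_v, L] = \nabla_v + \nabla_x$ and $[\nabla_x, L] = -\nabla^2 V\cdot\nabla_v$. This produces three kinds of contributions: (i) manifestly nonpositive dissipation $-\|D_x^\alpha D_v^{\beta+e_i} h\|^2$ coming from the $A^*A = -\Delta_v + v\cdot\nabla_v$ part of $L$; (ii) transport-type cross terms $\langle D_x^\alpha D_v^\beta h, D_x^{\alpha+e_i}D_v^{\beta-e_i} h\rangle$ arising from $[\nabla_v, v\cdot\nabla_x]$, which are exactly the patterns that differentiating the mixed terms in $\Phi_k$ produces with the opposite sign; and (iii) lower-order remainders involving $\nabla_x^l V$ for $2\leq l\leq k+1$, controlled by Assumption \ref{HypBHess}. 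Differentiation of the time weights themselves generates further terms carrying one fewer power of $t$, which must also be absorbed.

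The main obstacle will be algebraic: choosing $(a_{j,l}, b_{j,l})$ so that the resulting sum $\frac{d}{dt}\Phi_k(t,h_t)$ is a nonpositive quadratic form in the derivatives of $h$ up to order $k+1$, while simultaneously ensuring that $\Phi_k$ itself is a positive quadratic form whose size controls each pure term from above. I expect both issues to reduce to positivity of finite-dimensional matrices: first take the $b_{j,l}$ small enough that mixed terms are dominated by the adjacent pure ones via $2|\langle X, Y\rangle| \leq \varepsilon\|X\|^2 + \varepsilon^{-1}\|Y\|^2$, and then choose the $a_{j,l}$ hierarchically in $j$, so that lower-order pollution leaking into $\frac{d}{dt}\Phi_k$ is absorbed by the induction hypothesis at level $j-1$. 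The quadratic-form lemma in Appendix B is presumably what is invoked to close this bookkeeping. Once $\frac{d}{dt}\Phi_k(t,h_t) \leq 0$ is established, reading off each pure summand gives \eqref{EqThm} on $0 < t \leq 1$; sharpness of the exponent $k/2+l$ is treated separately in Appendix A.
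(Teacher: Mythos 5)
Your proposal takes essentially the same route as the paper: a H\'erau-style time-weighted Lyapunov functional with pure terms carrying weight $t^{j+2l}$, auxiliary cross terms to manufacture the missing $x$-dissipation, and coefficients chosen by induction on $k$ so that the time derivative becomes a nonpositive quadratic form (closed via the small positivity lemma of Appendix B), after which monotonicity from $\Phi_k(0,h_0)=\|h_0\|^2$ yields the estimate and sharpness is deferred to Appendix A. The one refinement the paper exploits and your sketch leaves loose is that because $A^*A$ already dissipates every $D_x^\alpha D_v^\beta h$ with $|\beta|\ge 1$, the only quantity genuinely missing at level $j$ is $\|\nabla_x^j h\|^2$, so a single mixed term $2\sigma_j\,t^{3j-1}\langle\nabla_x^{j-1}\nabla_v h,\nabla_x^j h\rangle$ per level suffices; the full family $b_{j,l}$ for $0<l<j$ that you allow is unnecessary and would only add bookkeeping.
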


Following H\'erau's method, the key of the proof is to construct the Lyapunov functional
\begin{align*}
	\mathcal{F}(t,h_t)
	&= ||h_t||^2 + \left(\sigma_{1,0}t||\nabla_vh_t||^2 + \sigma_{1,1}t^{3}||\nabla_xh_t||^2 + 2\sigma_1 t^2\langle \nabla_v h_t, \nabla_x h_t\rangle \right)\\
	&\quad + \cdots + \cdots + \cdots +\\
	&\quad + \bigg(\sigma_{k,0}t^{k}||\nabla^{k}_v h_t||^2 + \sigma_{k,1}t^{k+2}||\nabla^{k-1}_v\nabla_x h_t||^2 + \cdots + \sigma_{k,k}t^{3k}||\nabla^k_x h_t||^2 \\
	&\quad\quad\quad + 2\sigma_{k}t^{3k-1} \langle \nabla_x^{k-1}\nabla_v h_t, \nabla_x^{k}h_t\rangle \bigg)
\end{align*}
with certain coefficients $\sigma_{l,i}, \sigma_{l}$ ($0\leq i\leq l, 1\leq l\leq k$) such that this functional is monotonic decreasing in time. Indeed, we shall prove a stronger version of monotonicity, i.e.
\[
-\frac{\dd}{\dd t} \mathcal{F}(t,h_t)
\geq \frac{\Lambda_{k}}{t}\left( \sum_{1\leq l\leq k}
t^{3l}||\nabla_x^lh_t||^2+\sum\limits_{0\leq i \leq l\leq k} t^{l+1+2i}||\nabla^{l-i+1}_v\nabla^i_x h_t||^2\right)
\]
for some positive constant $\Lambda_k$, c.f. the Proposition \ref{propReg}. As mentioned above, the terms $||\nabla^l_xh||^2$ ($1\leq l\leq k$), which are important in the proof, are due to the introduction of the mixed terms $ \langle\nabla^{l-1}_x\nabla_v h,\nabla^l_x h \rangle$, c.f. Lemma \ref{lemDsptHk}. The implementation of the proof is presented in the Section~\ref{SectElli}, while the optimality of the exponents can be verified by the fundamental solutions in the special case of quadratic potentials, see for instance in the appendix A.
\medskip

Below is our hypocoercivity result in $H^k(\mu)$, where $k\geq 1$ is a integer.
\begin{thm}\label{thmHigh}
	Suppose Assumptions \ref{HypPI} and \ref{HypBHess} hold. Then there exist explicitly computable constants $C$ and $\lambda>0$, depending only on $\kappa$, $k$ and $M$, such that
	\begin{equation}\label{EqThmk}
	||h_t-\int h_0\dd\mu||_{H^{k}(\mu)}\leq Ce^{-\lambda t} ||h_0-\int h_0\dd\mu||_{H^{k}(\mu)}
	\end{equation}
	for $t\geq 0$, where $h_t=h(t,x,v)$ is the solution to the kinetic Fokker-Planck equation \eqref{Eq-kFP*} with the initial datum $h_0\in H^k(\mu)$.
\end{thm}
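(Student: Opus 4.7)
The plan is to adapt Villani's $H^1$ argument by constructing, inductively on $k$, a twisted inner product $((\cdot,\cdot))_{H^k}$ on $H^k(\mu)$ that is equivalent to the usual $H^k(\mu)$ structure (modulo the zero-mean constraint) and satisfies the coercivity estimate $((h, Lh))_{H^k} \geq \lambda_k \, ((h, h))_{H^k}$ for some $\lambda_k > 0$ and all $h$ with $\int h\,\dd\mu = 0$. The base case $k=1$ is Theorem \ref{thmVillani}. For the inductive step, building on $((\cdot,\cdot))_{H^{k-1}}$, I would set
\[
((h,h))_{H^k} := ((h,h))_{H^{k-1}} + \sum_{i=0}^{k} a_{k,i} \, \|\nabla_x^i \nabla_v^{k-i} h\|^2 + 2 b_k \, \langle \nabla_x^{k-1}\nabla_v h, \nabla_x^k h\rangle
\]
with positive constants $a_{k,i}, b_k$ to be chosen, where the mixed term $\langle \nabla_x^{k-1}\nabla_v h, \nabla_x^k h\rangle$ plays the role of $\langle \nabla_v h, \nabla_x h\rangle$ in Villani's $H^1$ argument: its temporal derivative, via $[v\cdot\nabla_x,\nabla_v]=-\nabla_x$, produces the missing coercivity $\|\nabla_x^k h\|^2$.

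Concretely, I would differentiate $((h_t, h_t))_{H^k}$ along \eqref{Eq-kFP*}. Each pure term $\|\nabla_x^i \nabla_v^{k-i} h\|^2$ yields, after commuting $\nabla_x, \nabla_v$ through $L$, the dissipation $2\|\nabla_x^i \nabla_v^{k-i+1} h\|^2$ together with mixed cross patterns and expressions involving $\nabla_x^j V \cdot \nabla_v$ for $2 \leq j \leq k+1$; these last are precisely what Assumption \ref{HypBHess} controls. The mixed term produces, after expansion, the desired $\|\nabla_x^k h\|^2$ plus lower-order remainders. I would then choose $b_k$ small enough so that Cauchy--Schwarz ensures $((\cdot,\cdot))_{H^k}$ is positive definite, and tune the hierarchy $a_{k,0} \gg a_{k,1} \gg \cdots \gg a_{k,k} \gg b_k$ so that the new dissipation at level $k$ dominates the internal cross terms and the couplings with the already-handled levels $\leq k-1$. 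Finally, the tensorized Poincar\'e inequality coming from Assumption \ref{HypPI} (combined with the Gaussian Poincar\'e inequality in velocity) converts this full seminorm dissipation into coercivity with respect to $((\cdot,\cdot))_{H^k}$, and Gr\"onwall's lemma concludes \eqref{EqThmk} after translating back through the norm equivalence.

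The main technical obstacle is the explicit verification, at each inductive step, that the quadratic form appearing in $-\frac{\dd}{\dd t}((h_t,h_t))_{H^k}$ is bounded below by a positive multiple of $\sum_{l\le k,\, 0\le i\le l}\|\nabla_x^i\nabla_v^{l-i+1}h\|^2 + \sum_{l\le k}\|\nabla_x^l h\|^2$. This requires careful management of the many cross terms that proliferate at higher $k$, using Young's inequality with well-chosen weights to absorb them into the dissipative terms, while controlling the $V$-dependent contributions $\|\nabla_x^j V \cdot \nabla_v g\|$ for $j$ up to $k+1$ uniformly by the right-hand side of \eqref{IneqVk}. The technical lemma on quadratic forms promised in Appendix B is presumably the abstract device that certifies the existence of admissible coefficients $(a_{k,i}, b_k)$ once these local bounds are in place, decoupling the combinatorial explosion of cross terms from the positivity argument.
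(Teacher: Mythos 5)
Your proposal matches the paper's proof strategy essentially exactly: the inductive construction of a twisted $H^k$ norm built by adding the seminorm $\sum_i \omega_{k,i}\|\nabla_x^i\nabla_v^{k-i}h\|^2 + 2\omega_k\langle\nabla_x^{k-1}\nabla_v h,\nabla_x^k h\rangle$ at each level, the use of the top-level mixed term to recover $\|\nabla_x^k h\|^2$ via $[v\cdot\nabla_x,\nabla_v]=-\nabla_x$, control of the $\nabla_x^j V\cdot\nabla_v$ contributions through Assumption \ref{HypBHess}, the Appendix~B lemma as the device certifying positivity of the resulting quadratic form, and the tensorized Poincar\'e inequality plus Gr\"onwall to close. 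One small mis-specification: the hierarchy you suggest, $a_{k,0}\gg\cdots\gg a_{k,k}\gg b_k$, is not the one the paper uses nor one that would satisfy the required matrix inequality -- the paper takes $\omega_{k,0}=\cdots=\omega_{k,k-1}=64k^2M\,\omega_k$ but $\omega_{k,k}=\omega_k/(16k^2M)$, so $\omega_{k,k}\ll\omega_k\ll\omega_{k,k-1}$; the $x$-only coefficient must sit \emph{below} the mixed coefficient because $\|\nabla_x^k h\|^2$ is produced by the mixed term, not by the pure $\omega_{k,k}$-term. This is a detail you would discover in carrying out the computation, and does not change the overall structure of the argument.
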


The proof of this result can be found in Section \ref{SectCoer}. The crucial ingredient is to construct a twisted $H^k(\mu)$ norm, which is equivalent to the usual $H^k(\mu)$ norm, defined by
\begin{align*}
	&\quad ((h,h))_{H^k}\\
	&= ||h||^2 + \left(\omega_{1,0}||\nabla_vh||^2 + \omega_{1,1}||\nabla_xh||^2 + 2\omega_1 \langle \nabla_v h, \nabla_x h\rangle \right)\\
	&\quad + \cdots + \cdots + \cdots +\\
	&\quad + \bigg(\omega_{k,0}||\nabla^{k}_v h_t||^2 + \omega_{k,1}||\nabla^{k-1}_v\nabla_x h||^2 + \cdots + \omega_{k,k}||\nabla^k_x h||^2 + 2\omega_{k}\langle \nabla_x^{k-1}\nabla_v h, \nabla_x^{k}h\rangle \bigg)
\end{align*}
with suitable coefficients $\omega_{l,i}, \omega_{l}$ ($0\leq i\leq l, 1\leq l\leq k$) such that
\[
((h,Lh))_{H^k}\geq \lambda_{k,0} \bigg(\sum\limits_{1\leq l \leq k,0\leq i\leq l}||\nabla_x^i\nabla_v^{l-i}h||^2 + \sum\limits_{0\leq l\leq k}||\nabla_x^l\nabla^{k+1-l}_v h||^2\bigg)
\]
for some constant $\lambda_{k,0}>0$ (c.f. Proposition \ref{PropCoerHk}). Here we are led to add the mixed terms $\langle \nabla_x^{l-1}\nabla_v h_t, \nabla_x^{l}h_t\rangle$ ($1\leq l\leq k$) in the new norm for the missing $||\nabla^l_xh_t||^2$ ($1\leq l\leq k$) in the dissipation of the usual $H^k$-norm. Combined with Assumption \ref{HypPI}, the preceding estimates implies that  the evolution equation \eqref{Eq-kFP*} is coercive under the new norm defined by $((\cdot,\cdot))_{H^k}$. From this the hypocoercivity in $H^k(\mu)$ follows, just as in Villani's proof.

\begin{rmq}
	One can also combine Theorem \ref{thmReg} with Theorem \ref{thmVillani} to prove \eqref{EqThmk} for $t\geq t_0>0$, see the proof of Corollary \ref{corHigh} below. However, in small time, hypocoercivity means that the quantity under consideration is not explosive; in large time, it requires exponential decay in that quantity; and both of the estimates are in a form with constants independent of the initial data. In this sense, our proof in Section \ref{SectCoer} deals with both short and long time estimates in $H^k$ with $H^k$ initial data simultaneously; while the proof for $t\geq t_0>0$ via regularity estimates in Theorem \ref{thmReg} cannot treat the short-time non-explosion in $H^k$ directly, due to the fact that regularization always takes time (i.e. regularity estimates are valid only for later times, and the constants will explode when approaching the starting time).
	
	Moreover, the proofs of Theorem \ref{thmHigh} and Theorem \ref{thmReg} are presented in a more or less unified style. The former is technically easier since the coefficients in the associated Lyaounov functionals do not depend on time. We also remark that, in the context of Theorem \ref{thmHigh}, it might be possible to choose time-dependent coefficients to obtain both hypocoercive (in large time) and global hypoelliptic estimates (in short time) simultaneously, as in our  work \cite{CGMZ}; but we shall not develop this viewpoint for the sake of technical clarity.
\end{rmq}

An immediate consequence of Theorem \ref{thmHigh} and Theorem \ref{thmReg} is,
\begin{cor}\label{corHigh}
	 Suppose the potential $V\in C^\infty(\RR^d)$ satisfies with some constant $M_0$
	\[
	|D_x^{\alpha}V(x)|\leq M_0, \quad \mbox{ for any } x\in \RR^d
	\]
	for any multi-index $\alpha$ such that $2\leq |\alpha|\leq k+1$. Suppose that the measure $e^{-V(x)}\dd x$ satisfies a Poincar\'e inequality. Then there exist explicitly computable constants $C$ and $\lambda>0$ such that
	\begin{equation}\label{EqThmkc}
	||h_t-\int h_0\dd\mu||_{H^{k}(\mu)}\leq Ct^{-3k/2}e^{-\lambda t} ||h_0-\int h_0\dd\mu||_{L^2(\mu)}
	\end{equation}
	where $h_t=h(t,x,v)$ is the solution to the kinetic Fokker-Planck equation \eqref{Eq-kFP*} with the initial datum $h_0\in L^2(\mu)$.
\end{cor}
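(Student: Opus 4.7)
The plan is to splice together the short-time regularization of Theorem \ref{thmReg} and the long-time exponential decay of Theorem \ref{thmHigh}. First, I would verify that the hypotheses of both theorems are met. The uniform bound $|D_x^\alpha V(x)|\leq M_0$ for $2\leq|\alpha|\leq k+1$ forces every tensor $\nabla_x^l V$ with $2\leq l\leq k+1$ to be uniformly bounded, hence
\[
\int|\nabla_x^l V\cdot\nabla_v g|^2\dd\mu \leq C(M_0,d,k)\int|\nabla_v g|^2\dd\mu,
\]
which is stronger than \eqref{IneqVk}, so Assumption \ref{HypBHess} holds; the assumed Poincar\'e inequality for $e^{-V(x)}\dd x$ is exactly Assumption \ref{HypPI}. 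I would also reduce to the mean-zero setting by replacing $h_0$ with $\bar h_0:=h_0-\int h_0\dd\mu$: since $L\mathbf{1}=0$ and \eqref{Eq-kFP*} is linear, $h_t-\int h_0\dd\mu$ is the solution emanating from $\bar h_0\in L^2(\mu)$.

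For $0<t\leq 1$, I would invoke Theorem \ref{thmReg} at each order $j\in\{1,\dots,k\}$ to obtain
\[
\|\nabla_x^l\nabla_v^{j-l}h_t\|_{L^2(\mu)}\leq C\, t^{-(j/2+l)}\|\bar h_0\|_{L^2(\mu)},\qquad 0\leq l\leq j\leq k,
\]
and combine with the trivial $L^2$-contraction for the $j=0$ component. Since $j/2+l\leq 3k/2$ in this range and $t^{-a}$ is nondecreasing in $a$ for $t\leq 1$, summing over all pairs $(j,l)$ yields
\[
\bigl\|h_t-{\textstyle\int} h_0\dd\mu\bigr\|_{H^k(\mu)}\leq C_1\, t^{-3k/2}\|\bar h_0\|_{L^2(\mu)},\qquad 0<t\leq 1.
\]
In particular, $h_1-\int h_0\dd\mu$ lies in $H^k(\mu)$ with norm controlled by $\|\bar h_0\|_{L^2(\mu)}$.

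For $t\geq 1$, I would restart the evolution at time $1$ and apply Theorem \ref{thmHigh} to the mean-zero $H^k(\mu)$-function $h_1-\int h_0\dd\mu$, obtaining
\[
\bigl\|h_t-{\textstyle\int} h_0\dd\mu\bigr\|_{H^k(\mu)}\leq C\, e^{-\lambda(t-1)}\bigl\|h_1-{\textstyle\int} h_0\dd\mu\bigr\|_{H^k(\mu)}\leq C_2\, e^{-\lambda t}\|\bar h_0\|_{L^2(\mu)}.
\]
To merge the two ranges into the single estimate \eqref{EqThmkc}, I would observe that on $(0,1]$ one has $t^{-3k/2}\leq e^{\lambda'}t^{-3k/2}e^{-\lambda' t}$ for any $\lambda'>0$, while on $[1,\infty)$ the finiteness of $\sup_{s\geq 1}s^{3k/2}e^{-(\lambda-\lambda')s}$ for any $\lambda'\in(0,\lambda)$ gives $e^{-\lambda t}\leq C_3\, t^{-3k/2}e^{-\lambda' t}$. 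Re-labelling $\lambda'$ as $\lambda$ and taking a common constant completes the proof. I do not anticipate any real obstacle; the argument is essentially book-keeping, the only mildly delicate point being the slight loss of exponential rate needed to absorb the polynomial factor $t^{3k/2}$ on $[1,\infty)$.
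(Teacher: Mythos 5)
Your plan coincides with the paper's first proof of Corollary~\ref{corHigh}: verify the hypotheses, reduce to mean zero, use Theorem~\ref{thmReg} on $(0,1]$, restart at $t=1$ and use Theorem~\ref{thmHigh} on $[1,\infty)$. Two points are worth flagging. First, when you invoke Theorem~\ref{thmReg} ``at each order $j\in\{1,\dots,k\}$'' to fill out the full $H^k$ norm, this is legitimate because Assumption~\ref{HypBHess} for $k$ (requiring \eqref{IneqVk} for $2\le l\le k+1$) a fortiori implies it for every $j<k$; it is good that you say so, since the theorem as stated only controls derivatives of total order exactly $k$. Second, and more substantively, your treatment of the merge on $[1,\infty)$ is more careful than the paper's. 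The paper concludes with ``take $C=\max\{C'e^{\lambda},C'C''e^{\lambda}\}$,'' but on $[1,\infty)$ the bound produced is $C'C''e^{\lambda}e^{-\lambda t}\|h_0\|_{L^2}$ whereas the target is $Ct^{-3k/2}e^{-\lambda t}\|h_0\|_{L^2}$; since $t^{-3k/2}\to 0$, no fixed $C$ works unless one relaxes the exponential rate. Your observation that one should shrink $\lambda$ to some $\lambda'\in(0,\lambda)$ and absorb $t^{3k/2}$ via $\sup_{s\ge1}s^{3k/2}e^{-(\lambda-\lambda')s}<\infty$ is exactly the correction needed, and it is compatible with the corollary's statement, which only claims existence of \emph{some} $\lambda>0$. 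Your argument is therefore correct, and in fact closes a small gap in the paper's own first proof.
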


\begin{proof}[First proof of Corollary \ref{corHigh}]
	By the assumptions, we can apply Theorem \ref{thmHigh} and Theorem \ref{thmReg}. Since $h_{t}-\int h_0\dd\mu$ satisfies the same evolution equation, we may assume $\int h_0\dd\mu =0$. By Theorem \ref{thmReg}, for $0<t\leq 1$, there exists some positive constants $C'$ such that
	\begin{align*}
		||h_{t}||_{H^{k}(\mu)}
		&\leq C't^{-3k/2} ||h_{0}||_{L^{2}(\mu)}.
	\end{align*}
	By Theorem \ref{thmHigh}, for $t\geq 1$, there exists some positive constants $C''$ and $\lambda$ such that
	\begin{align*}
		||h_{t}||_{H^{k}(\mu)}
		&\leq C''e^{-\lambda(t-1)} ||h_{1}||_{H^{k}(\mu)} \leq C'C''e^{-\lambda(t-1)} ||h_{0}||_{L^{2}(\mu)}
	\end{align*}
	It suffices to take $C=\max\{C'e^{\lambda}, C'C''e^{\lambda}\}$ to ensure \eqref{EqThmkc}.
\end{proof}

\begin{proof}[Second proof of Corollary \ref{corHigh}]
	We can also apply the hypocoercivity results in $H^1$ (c.f. Theorem \ref{thmVillani} or Villani \cite{Villani}) or in $L^2$ (c.f. H\'erau \cite{Herau} or Dolbeault-Mouhot-Schmeiser \cite{DMS15}). For instance, as in the first proof, we may assume $\int h_0\dd\mu =0$ and we can apply Theorem \ref{thmReg} for $0<t\leq 1$. For $t\geq 1$, there exists some positive constants $C'$ such that
	\begin{align*}
		||h_{t}||_{H^{k}(\mu)}
		&\leq C' ||h_{t-1}||_{L^{2}(\mu)}.
	\end{align*}
	By applying the hypocoercivity theorem in $L^2$, there exists positive constants $C$ and $\lambda$ such that
	\[
	||h_{t-1}||_{L^{2}(\mu)}\leq C''e^{-\lambda(t-1)}||h_0||_{L^2(\mu)}
	\]
	which then implies the desired result \eqref{EqThmkc} for $t\geq 1$.
\end{proof}

\begin{rmq}[Further questions] It would be interesting to relax the boundedness assumption~\ref{HypBHess}.
\end{rmq}

\subsection{An example in the mean field setting}
In this subsection, we present an example of the mean field model which shows our results could be independent of the number of particles.

The starting point is the following observation: The constants $C$ and $\lambda$ in Theorems \ref{thmHigh} and \ref{thmReg} have no dependence on the dimension as soon as $M$ (in Assumption \ref{HypBHess}) and $\kappa$ (in Assumption \ref{HypPI}) do so. It has been shown in our recent work \cite{GLWZ} that inequalities in the form of \eqref{IneqVk} are useful to get uniform-in-dimension convergence to equilibrium in $H^1$.

%We should also notice that the constant $M_0$ in Corollary \ref{corHigh} has strong dependence on dimension.

Now consider the potential $V$ of mean-field type, i.e.
\begin{equation}\label{CW-V}
	V(x_1,x_2,\cdots,x_N)= \sum_{i} U(x_i) + \frac{1}{2N}\sum_{j\neq i} W(x_i,x_j)
\end{equation}
where $U(x_i)$ stands for the spatial confinement on the particle at position $x_i\in \RR$, $W(x_i,x_j)$ for the interaction between the particle at $x_i$ and the one at $x_j$, and $N$ the total number of particles. Note that the case of $W(x_i,x_j) = W_0(x_i-x_j)$ was treated in  \cite{GLWZ}. Here we shall focus on the following Curie-Weiss model,
\begin{equation}\label{CW-uw}
	U(x_i)= \beta(\frac{x_i^4}{4}-\frac{x_i^2}{2}), \quad W(x_i,x_j)= -\beta K x_ix_j
\end{equation}
where $\beta>0$ is the inverse temperature, and the model is ferromagnetic or antiferromagnetic according to $K > 0$ or $K < 0$. We remark that the method for Assumption \ref{HypBHess} developed in Section \ref{SectApp1} does not make use of uniform log-Sobolev inequalities, unlike the methods in \cite{GLWZ}.

We shall prove in Section \ref{SectApp1} that Assumption 2 holds with
\[
M=2020(\beta^{2/3}+ \beta^2 + K^4\beta^2)
\]
for any $k\geq 1$, and that Assumption \ref{HypPI} holds with constant $\kappa$ independent of the number $N$ of particles under certain conditions which prevent phase transitions. It follows that the Theorems \ref{thmHigh} and \ref{thmReg} apply to this setting with estimates that do not rely on the number of particles. More precisely,

\begin{prop}[Curie-Weiss model]\label{PropCW}
	Let the potential $V$ be defined by \eqref{CW-V} and \eqref{CW-uw}.
	\begin{itemize}
		\item in the antiferromagnetic case ($K < 0$), when the number $N$ of particles is sufficiently large in the sense that
		\[
		N\geq \frac{2 \beta^{3/2} K}{\sqrt{\pi}} e^{-\beta/4},
		\]
		the Assumption $\ref{HypPI}$ holds with $\kappa=\frac12 \frac{\sqrt{\beta}}{\sqrt{\pi}} e^{-\beta/4}$.
		\item in the ferromagnetic case ($K > 0$), suppose the temperature is high enough in the sense that ($\beta$ is sufficiently small)
		\begin{equation}\label{CW_beta}
		\lambda_1:=\frac{\sqrt{\pi}}{\sqrt{\beta}} e^{\beta/4} -\beta K>0.
		\end{equation}
		Then Assumption $\ref{HypPI}$ holds with $\kappa=1/\lambda_1$.
	\end{itemize}
   In both cases, for $L^2$ initial data, it holds that the distribution $h_t$ becomes smooth at positive times,
   \[
   	||\nabla_x^{l}\nabla_v^{k-l}h_t||_{L^2(\mu)}\leq Ct^{-(\frac{k}{2}+l)} ||h_0||_{L^2(\mu)}, \quad \mbox{ for } 0<t\leq 1;
   \]
   for $H^k$ initial data, it holds that the distribution $h_t$ decays exponentially fast to the equilibrium, namely,
   \[
   ||h_t-\int h_0\dd\mu||_{H^{k}(\mu)}\leq Ce^{-\lambda t} ||h_0-\int h_0\dd\mu||_{H^{k}(\mu)},
   \]
   where the constants in the above estimates are independent of $N$(for $N$ large in the first case).

   In particular, in both cases, starting from any $L^2(\mu)$-initial state, the distribution converges to the equilibrium exponentially fast in $H^k(\mu)$ (for any $k\geq 1$) in large time, and the rates of convergence are independent of the number $N$ of particles.
\end{prop}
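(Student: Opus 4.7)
The statement has three logically separate components: the Poincar\'e inequality (Assumption \ref{HypPI}) with the claimed $\kappa$ in each regime, the relative boundedness (Assumption \ref{HypBHess}) with the claimed $M$, and the two convergence estimates, which then follow directly from Theorems \ref{thmHigh} and \ref{thmReg}. Throughout, the crux is to keep all constants uniform in the number of particles $N$.

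For the Poincar\'e inequality, my plan is to start from the single-site measure $\nu_0 \propto e^{-U(x)}\dd x$, where $U(x)=\beta(x^4/4 - x^2/2)$ is a symmetric one-dimensional double-well. Its spectral gap admits an explicit estimate via the Muckenhoupt criterion (or via a two-well test function), giving the prefactor $\sqrt{\pi/\beta}\,e^{\beta/4}$, which encodes the barrier height $\beta/4$ together with the Gaussian-width normalization near each minimum. Tensorization yields the same constant for $\nu_0^{\otimes N}$. The full measure $\nu$ differs from this product by the quadratic interaction factor
\[
\exp\!\left(\frac{\beta K}{2N}\Bigl[\bigl(\textstyle\sum_i x_i\bigr)^2 - \sum_i x_i^2\Bigr]\right),
\]
and the two regimes are then handled separately. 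In the antiferromagnetic case ($K<0$) this factor is log-concave (modulo the small per-site piece absorbed into $U$), so a perturbation of the product Poincar\'e inequality combined with the largeness condition on $N$ yields the stated constant, the factor $1/2$ reflecting a natural symmetrization loss. In the ferromagnetic case ($K>0$) I would expand the variance across the product measure, apply the single-site Poincar\'e along each $x_i$-direction, and then Cauchy--Schwarz-control the attractive cross-term from $-(\sum_i x_i)^2/N$; this gives $\kappa^{-1} \geq \sqrt{\pi/\beta}\,e^{\beta/4} - \beta K = \lambda_1$, whence the high-temperature hypothesis \eqref{CW_beta}.

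For Assumption \ref{HypBHess}, the derivatives of $V$ are explicit and polynomial of degree at most two in any $x_i$, with $\nabla_x^l V \equiv 0$ for $l\geq 5$: namely $\partial_{x_i}^2 V = \beta(3x_i^2-1)$, $\partial_{x_i}\partial_{x_j}V = -\beta K/N$ for $i\neq j$, $\partial_{x_i}^3 V = 6\beta x_i$, and $\partial_{x_i}^4 V = 6\beta$. The left-hand side of \eqref{IneqVk} therefore reduces to weighted quantities of the form $\int x_i^{2m}(\partial_{v_i}g)^2\,\dd\mu$ for $m\leq 2$, together with a mean-field off-diagonal piece that is harmless by Cauchy--Schwarz: $\tfrac{1}{N^2}\bigl|\sum_{j\neq i}\partial_{v_j}g\bigr|^2 \leq \tfrac{N-1}{N^2}\sum_{j\neq i}(\partial_{v_j}g)^2$, whose sum over $i$ is bounded by $||\nabla_v g||^2$. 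For the weighted inequalities I would integrate by parts in $x_i$ against $e^{-V}$, exploiting the cubic tail growth $\partial_{x_i}V \sim \beta x_i^3$ to absorb the moments of $x_i$ into $||\nabla_v g||^2 + ||\nabla_{xv}^2 g||^2$; carrying the explicit constants through the computation yields the displayed $M = 2020(\beta^{2/3} + \beta^2 + K^4\beta^2)$, with no dependence on $N$ thanks to the $1/N$ scaling of the interaction.

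With both assumptions verified with $N$-independent constants, the short-time smoothing estimate is an immediate application of Theorem \ref{thmReg} and the long-time exponential decay in $H^k$ of Theorem \ref{thmHigh}. I expect the main obstacle to be the weighted Poincar\'e-type inequalities underlying Assumption \ref{HypBHess}: naively, polynomial moments of $x$ under $\mu$ would enter with $N$-dependent prefactors, and the mean-field scaling must be used carefully to avoid this; obtaining a clean closed estimate in terms of $||\nabla_v g||^2 + ||\nabla_{xv}^2 g||^2$ (rather than higher-order quantities) is the delicate point. The ferromagnetic Poincar\'e estimate is also sensitive near the phase-transition threshold $\lambda_1\downarrow 0$, where both the constant and the perturbation argument degenerate.
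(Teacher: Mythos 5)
Your high-level decomposition matches the paper: verify Assumption \ref{HypPI} and Assumption \ref{HypBHess} with $N$-independent constants, then invoke Theorems \ref{thmHigh} and \ref{thmReg}. But there are two notable divergences, one inconsequential and one a genuine gap.

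First, the inconsequential one: the paper does \emph{not} prove the spatial Poincar\'e inequality at all — it simply cites Lemma \ref{CWLemA1} from \cite{GLWZ1} and writes ``the proof is omitted.'' Your single-site Muckenhoupt + tensorization + perturbation sketch is a plausible route and may well resemble what \cite{GLWZ1} does, but it is not the paper's content, so there is nothing to compare; I only note that your claim of log-concavity of the interaction factor in the antiferromagnetic case requires exactly the ``absorb $-\sum x_i^2$ into $U$'' caveat you parenthetically insert, since the raw Hessian $\frac{\beta|K|}{N}(J-I)$ is indefinite.

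The genuine gap is in the verification of Assumption \ref{HypBHess}. You correctly reduce to the weighted inequalities \eqref{CWLemI1}--\eqref{CWLemI2} for $\int x_i^{2m}g^2\,\dd\mu$ and correctly propose integration by parts with the test function exploiting $\partial_{x_i}V\sim\beta x_i^3$, but you leave unresolved precisely the step you flag as ``the delicate point.'' A single integration by parts at site $i$ with $\Phi=\beta x_i$ produces, on the good side, $\beta^2 x_i^4$, but on the bad side the cross terms $\frac{\beta^2 K}{N}\sum_{j\neq i}x_ix_j$, and there are $N-1$ of these: they \emph{cannot} be absorbed by $x_i^4$ alone with an $N$-free constant. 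The paper's key idea (Lemma \ref{CWLem24} and the inequality \eqref{CWLemIn'}) is to apply the very same Lyapunov/integration-by-parts estimate at \emph{every} other site $j$, but with the test function rescaled to $\Phi=\frac{\beta}{N}x_j$, and then sum all $N$ estimates. Each rescaled estimate contributes a good term $\frac{\beta^2}{N}x_j^4$ with the $1/N$ prefactor exactly cancelling the $N-1$ multiplicity, and it is these $\frac{1}{N}$-weighted quartics that swallow the cross terms via $2|K|x_ix_j\le K^2x_j^2+x_i^2$ (and a second pass for the $x_jx_k$ terms that the site-$j$ estimate itself produces). Without this device the moments under $\mu$ do enter with $N$-dependent prefactors, exactly as you fear. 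So the obstacle you identify is real, and your proposal does not contain the idea that closes it.
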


It is generally believed that the uniform in the number of particles exponential convergence above holds only in the case of no phase transition. That is exactly the meaning of the anti-ferromagnetic condition (at any temperature) or of the condition \eqref{CW_beta} on the inverse temperature in the ferromagnetic case.

\subsection{Notations}
We follow the notations in \cite{Villani}. Let $\H$ be a Hilbert space equipped with the Hilbert norm $||\cdot||$ and the scalar product $\langle\cdot,\cdot\rangle$. Let $A:\H\rightarrow \H^m$ be a linear densely-defined operator with domain $\mathcal{D}(A)$. So the operator $A$ could be written as a $m$-tuple vector of linear operators on $\H$, say
\[
A=(A_i)_{1\leq i\leq m}^\mathsf{T}=(A_1, A_2, \cdots, A_m)^\mathsf{T}, \quad \mbox{ with } A_i:\H \rightarrow \H.
\]
And its adjoint operator $A^*: \H^m\rightarrow \H$ is then given by
\[
A^*=(A_i^*)_{1\leq i\leq m}= (A_1^*, A_2^*, \cdots, A_m^*),
\]
or more explicitly, for a vector $g=(g_1,g_2,\cdots, g_m)^\mathsf{T} \in \H^m$,
\[
A^*g= (A_1^*, A_2^*, \cdots, A_m^*) (g_1,g_2,\cdots, g_m)^\mathsf{T} =\sum\limits_{1\leq i\leq m} A_i^*g_i.
\]
Therefore the linear operator $A^*A$ has the form
\[
A^*A = (A_1^*, A_2^*, \cdots, A_m^*)(A_1, A_2, \cdots, A_m)^\mathsf{T}=\sum\limits_{1\leq i\leq m} A_i^*A_i.
\]
Given two operators $B_1,B_2: \H \rightarrow \H$, their commutator is defined by $[B_1,B_2]:=B_1B_2-B_2B_1$. We stress that the commutator $[A,B]$ of $A:\H \rightarrow \H^m$ and $B: \H \rightarrow \H$, should be understood as the $m$-tuple operator-valued vector $([A_i,B])^\mathsf{T}_{1\leq i\leq m}$; similarly, the commutator of two operator-valued vectors should be understood as an operator-valued matrix in the same way.

Hereafter, we set $\H= L^2(\mu)$. Set two operators $A: \H\rightarrow \H^d $ and $B:\H\rightarrow \H$ as
\[
A:=(A_i)^\mathsf{T}_{1\leq i\leq d}=\nabla_v, \quad\quad B:= v\cdot\nabla_x - \nabla V(x)\cdot \nabla_v,
\]
then
\[
A^*= -\Div_v + v\cdot, \quad\quad B^*=-B,
\]
and
\[
A^*A= -\Delta_v + v\cdot\nabla_v.
\]

We shall mainly consider the weighted $H^k$ space $H^k(\mu)$ (with $k$ being a positive integer) with the norm
\[
||h||_{H^k(\mu)}^2= \sum\limits_{\alpha,\beta: |\alpha|+|\beta|\leq k}\int |D_x^{\alpha}D_v^{\beta}h|^2\dd\mu
\]
where $\alpha, \beta$ are multi-indexes of respective order $|\alpha|$ and $|\beta|$, and the partial derivative $D_x^{\alpha}D_v^{\beta}g$ is given as usual by
\[
D_x^{\alpha}D_v^{\beta}g = \frac{\partial^{|\alpha|+|\beta|} g}{\partial x_1^{\alpha_1}\cdots \partial x_d^{\alpha_d}\partial v_1^{\beta_1}\cdots \partial v_d^{\beta_d}}.
\]
To avoid heavy notations, we shall denote the $L^2(\mu)$ norm by $||\cdot||$, the $L^2(\mu)$ scalar product by $\langle \cdot,\cdot\rangle$ without subscripts referring to the reference measure $\mu$. We should also mention that $\langle\cdot,\cdot\rangle$ might be used for the scalar product in Euclidean spaces in certain occasions such as in local computations or in linear algebra. The notation $\dot{H}^{k}(\mu)$ stands for the homogeneous Sobolev space with the semi-norm defined by
\[
||h||_{\dot{H}^k(\mu)}^2= \sum\limits_{\alpha,\beta: |\alpha|+|\beta|= k}\int |D_x^{\alpha}D_v^{\beta}h|^2\dd\mu.
\]

For any non-negative integers $i,j$ and suitably-differentiable function $g$, we denote
\[
\nabla_x^i\nabla_v^{j}g := (D_x^{\alpha}D_v^{\beta}g)_{\alpha,\beta}
\]
where $\alpha, \beta$ run over multi-indexes of respective order $|\alpha|=i$ and $|\beta|=j$. In particular, the squared Hilbert-Schmidt norm of $\nabla_x^i\nabla_v^{j}g$, denoted by $|\nabla_x^i\nabla_v^{j}g|$, is defined via
\[
|\nabla_x^i\nabla_v^{j}g|^2
:= \sum\limits_{|\alpha|= i, |\beta|=j} |D_x^{\alpha}D_v^{\beta}g|^2,
\]
and so
\begin{equation}\label{NormHS}
||\nabla_x^i\nabla_v^{j}g||_{L^2(\mu)}^2
:=\int |\nabla_x^i\nabla_v^{j}g|^2\dd\mu = \sum\limits_{|\alpha|= i, |\beta|=j} \int |D_x^{\alpha}D_v^{\beta}g|^2\dd\mu.
\end{equation}
Note that the definition of $|\nabla_x^{l}V\cdot\nabla_vh|$ in Assumption \ref{HypBHess} is consistent with the above convention since
\[
\nabla_x^{l}V\cdot\nabla_vh := \sum\nolimits_{j=1}^{d}\big(\nabla_x^{l-1} \partial_{x_j}V\big) \partial_{v_j}h= \left(\sum\nolimits_{j=1}^{d}(D^{\alpha}_x (\partial_{x_j} V)\cdot \partial_{v_j}h\right)_{\alpha: |\alpha|=l-1}
\]
where the summation and multiplication in the middle are componentwise.

The Hilbertian structure, after polarization, induces a scalar product between the matrices of partial derivatives. For instance, consider $\nabla^2_{xv}:=\nabla_{x}\nabla_{v}:= (\partial_{x_i}\partial_{v_j})_{(i,j): 1\leq i,j\leq d}$ and $\nabla^2_{vx}=\nabla_{v}\nabla_{x}$(note that these two matrices are not identical), we have
\[
\langle \nabla_{v}\nabla_{x}g,\nabla_{x}\nabla_{v}h\rangle = \int \sum\limits_{i,j} (\partial^2_{v_ix_j}g) (\partial^2_{x_iv_j}h) \dd\mu.
\]
Here is one more example we shall use later. For nonnegative integers $m_2$ and $ l\leq m_1$,
\begin{align*}
\langle\nabla_x^{m_1-l}\nabla_v\nabla_x^{l-1}\nabla_v^{m_2}g, \nabla_x^{m_1}\nabla_v^{m_2}h\rangle
&=
\sum_{\alpha_1,\alpha_2,\alpha_3,\alpha_4}
\langle D_x^{\alpha_1}D_v^{\alpha_2}D_x^{\alpha_3}D_v^{\alpha_4}g,
D_x^{\alpha_1}D_x^{\alpha_2}D_x^{\alpha_3}D_v^{\alpha_4}h\rangle\\
&= \sum_{\alpha_1,\alpha_2,\alpha_3,\alpha_4} \int(D_x^{\alpha_1}D_v^{\alpha_2}D_x^{\alpha_3}D_v^{\alpha_4}g) (D_x^{\alpha_1}D_x^{\alpha_2}D_x^{\alpha_3}D_v^{\alpha_4}h) \dd\mu
%\sum_{i_1,i_2,\cdots,i_{m_1+m_2}} \langle\nabla_{x_{i_1}}\nabla_{x_{i_2}}\cdots\nabla_{x_{i_{m_1}}}
%\nabla_{v_{i_{m_1+1}}}\nabla_{v_{i_{m_1+2}}}\cdots\nabla_{v_{i_{m_1+m_2-1}}}
%\nabla_x\nabla_v^{l-1}h,
%\nabla_{x_{i_1}}\nabla_{x_{i_2}}\cdots\nabla_{x_{i_{m_1}}}
%\nabla_v^{m_2}h\rangle
\end{align*}
where $\alpha_1$, $\alpha_2$ $\alpha_3,\alpha_4 $ run over multi-indexes of respective order $m_1-l, 1, l-1, m_2$.

\section{Preliminary estimates in $H^{k}(\mu)$}
\label{SectPre}
The goal of this section is to provide estimates for the temporal derivatives of the terms in the twisted $H^k$ norm. Such estimates will then be applied to obtain hypocoercivity and global hypoellpticity in the next sections.

To begin with, let us introduce the following commutation relations which will play an essential role. Recall that 
\[
A=\nabla_v, \quad B=v\cdot\nabla_x - \nabla_xV(x)\cdot\nabla_v,\quad L=A^*A+B.
\]

\begin{lem}\label{lemComm}  By direct computation, it holds
	\begin{enumerate}[label=(\arabic*)]
		\item $[A,A^*]=I$, i.e. $[A_i,A_j^*]=\delta_{ij}$;
		\item $C:=[A,B]=\nabla_x$;
		\item $R:=[C,B]=[\nabla_x, v\cdot\nabla_x - \nabla V(x)\cdot \nabla_v]= -\nabla^2V(x)\cdot\nabla_v$.
	\end{enumerate}
	Note also that $A$ commutes with both itself and $C$.
\end{lem}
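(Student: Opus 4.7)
The statement is really a bookkeeping calculation: all three commutators are first-order in nature once one writes out the operators explicitly. The plan is therefore to compute each bracket by applying it to a test function and using the definition $A^\ast = -\Div_v + v\,\cdot$ together with the elementary identity $[\partial_{v_i},\partial_{v_j}] = [\partial_{x_i},\partial_{x_j}] = [\partial_{v_i},\partial_{x_j}] = 0$ for partial derivatives of smooth functions.

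For (1), I would expand $A_iA_j^\ast$ and $A_j^\ast A_i$ on a smooth test function $h$: the composition $\partial_{v_i}\bigl(-\partial_{v_j}h + v_j h\bigr)$ produces $-\partial^2_{v_iv_j}h + \delta_{ij}h + v_j\partial_{v_i}h$, while $(-\partial_{v_j}+v_j)\partial_{v_i}h$ gives $-\partial^2_{v_jv_i}h + v_j\partial_{v_i}h$; subtracting yields $\delta_{ij}h$, hence $[A_i,A_j^\ast] = \delta_{ij}$. This is the classical canonical commutation relation for the Ornstein--Uhlenbeck operator, and there is no obstacle.

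For (2) I would split $B$ into the transport part $v\cdot\nabla_x$ and the drift part $-\nabla V(x)\cdot\nabla_v$. For each index $i$, the Leibniz rule gives $[\partial_{v_i},v_j\partial_{x_j}] = \delta_{ij}\partial_{x_j}$ (since $\partial_{v_i}v_j = \delta_{ij}$), whose sum over $j$ equals $\partial_{x_i}$; on the other hand $\partial_{x_j}V(x)$ depends only on $x$, so $[\partial_{v_i},\partial_{x_j}V\,\partial_{v_j}] = 0$. Collecting the $d$ components gives $[A,B] = \nabla_x = C$. For (3), the same decomposition shows that $[\partial_{x_i},v_j\partial_{x_j}] = 0$ while $[\partial_{x_i},\partial_{x_j}V\,\partial_{v_j}] = (\partial^2_{x_ix_j}V)\,\partial_{v_j}$, and summing over $j$ together with the minus sign coming from $B$ yields the $i$-th component $-(\nabla^2V\cdot\nabla_v)_i$, hence $[C,B] = -\nabla^2V(x)\cdot\nabla_v$.

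The final remark that $A$ commutes with itself and with $C = \nabla_x$ is immediate from the commutativity of partial derivatives applied to smooth functions. Since each identity is obtained by an elementary componentwise check, there is no genuine obstacle; the only thing to be careful about is notational bookkeeping in the vector-valued brackets, following the convention from Section~2.3 that $[A,B]$ with $A:\mathcal{H}\to\mathcal{H}^d$ and $B:\mathcal{H}\to\mathcal{H}$ is understood as the $d$-tuple $([A_i,B])_{1\leq i\leq d}^{\mathsf T}$.
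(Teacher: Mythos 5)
Your proposal is correct and is exactly the direct componentwise computation the paper alludes to when it says ``by direct computation''; the paper supplies no more detail than that, and your three verifications (canonical commutation relation, $[\partial_{v_i}, v_j\partial_{x_j}]=\delta_{ij}\partial_{x_j}$ with vanishing drift contribution, and $[\partial_{x_i},\partial_{x_j}V\,\partial_{v_j}]=(\partial^2_{x_ix_j}V)\partial_{v_j}$) match the intended argument precisely.
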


\subsection{Temporal derivatives}
Let $m_1,m_2$ be non-negative integers such that $m_1+m_2=k$. Set
\begin{align}
  T_{m_1,m_2}^A&:= \langle \nabla_x^{m_1}\nabla^{m_2}_{v}A^*Ah, \nabla_x^{m_1}\nabla^{m_2}_{v}h \rangle, \\
  T_{m_1,m_2}^B&:= \langle \nabla_x^{m_1}\nabla^{m_2}_{v}Bh, \nabla_x^{m_1}\nabla^{m_2}_{v}h \rangle, \\
  T_{mix}^A &:= \langle \nabla_x^{k-1}\nabla_v A^*Ah, \nabla_x^{k}h\rangle + \langle \nabla_x^{k-1}\nabla_vh, \nabla_x^{k}A^*Ah\rangle,\\
  T_{mix}^B &:= \langle \nabla_x^{k-1}\nabla_v Bh, \nabla_x^{k}h\rangle + \langle \nabla_x^{k-1}\nabla_vh, \nabla_x^{k}Bh\rangle,
\end{align}
where the pairing is as stated as in section 2, and we also note that $A^*Ah$ and $Bh$ are scalar functions.  Then we have the following result.

\begin{lem}\label{lemDsptHk}Let $h\in \mathcal{S}(\RR^{2d})$ be a rapidly decreasing function. Then
\begin{align}
\label{EqDsptHk1}
  T_{m_1,m_2}^A&= ||\nabla_x^{m_1}\nabla_v^{m_2+1}h||^2 + m_2 ||\nabla_x^{m_1}\nabla_v^{m_2}h||^2,
  \end{align}
  \begin{align}
\label{EqDsptHk2}
  T_{m_1,m_2}^B &=\sum\limits_{l=1}^{m_2}\langle \nabla_x^{m_1}\nabla_v^{m_2-l}\nabla_x\nabla_v^{l-1}h, \nabla_x^{m_1}\nabla_v^{m_2}h\rangle \notag\\
   & \quad +\sum\limits_{l=1}^{m_1} \langle\nabla_x^{m_1-l}(-\nabla^2V\cdot\nabla_v)\nabla_x^{l-1}\nabla_v^{m_2}h, \nabla_x^{m_1}\nabla_v^{m_2}h\rangle,
\end{align}
\begin{align}
\label{EqDsptHk3}
 T_{mix}^A &= 2\langle \nabla_x^{k-1}\nabla_v^2 h, \nabla^k_x\nabla_v h\rangle + \langle \nabla_x^{k-1}\nabla_v h, \nabla^k_xh\rangle,
 \end{align}
 \begin{align}
\label{EqDsptHk4}
 T_{mix}^B &= ||\nabla^k_x h||^2 + \sum\limits_{l=1}^{k-1} \langle \nabla_x^{k-l-1}(-\nabla^2V\cdot\nabla_v)\nabla_x^{l-1}\nabla_v h, \nabla^k_x h\rangle\notag\\
 &\quad + \sum\limits_{l=1}^{k} \langle \nabla^{k-1}_x\nabla_v h, \nabla_x^{k-l}(-\nabla^2V\cdot\nabla_v)\nabla_x^{l-1} h\rangle
\end{align}
where $-\nabla^2V\cdot\nabla_v$ is understood as a twisted gradient in the pairing.
\end{lem}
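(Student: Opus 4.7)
The plan is to prove all four identities by a direct commutator computation, relying on three ingredients: the commutation relations of Lemma~\ref{lemComm}, the identity $\langle A^*Ag,g\rangle=\|Ag\|^2$ obtained by integration by parts in $L^2(\mu)$, and the antisymmetry $B^*=-B$. The common strategy is to push $A^*A$ or $B$ past the differential prefactors $\nabla_x^{m_1}\nabla_v^{m_2}$ (or $\nabla_x^{k-1}\nabla_v$ and $\nabla_x^k$ in the mixed case), collect the commutator residues, and then dispose of the principal term either through integration by parts through $A$ or through $\langle B\phi,\phi\rangle=0$.

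For \eqref{EqDsptHk1}, I would first note that $\nabla_x^{m_1}$ commutes with $A^*A=-\Delta_v+v\cdot\nabla_v$, and iterate the elementary identity $[\partial_{v_i},v\cdot\nabla_v]=\partial_{v_i}$ to obtain $[\nabla_v^{m_2},A^*A]=m_2\nabla_v^{m_2}$. Integrating by parts on the resulting principal term converts it into $\|\nabla_x^{m_1}\nabla_v^{m_2+1}h\|^2$, while the scalar correction contributes $m_2\|\nabla_x^{m_1}\nabla_v^{m_2}h\|^2$. For \eqref{EqDsptHk2}, the key is the pair of inductive commutator expansions
\begin{align*}
[\nabla_v^{m_2},B]&=\sum_{l=1}^{m_2}\nabla_v^{m_2-l}\,\nabla_x\,\nabla_v^{l-1},\\
[\nabla_x^{m_1},B]&=\sum_{l=1}^{m_1}\nabla_x^{m_1-l}(-\nabla^2V\cdot\nabla_v)\nabla_x^{l-1},
\end{align*}
which follow immediately from $[\nabla_v,B]=\nabla_x$, $[\nabla_x,B]=-\nabla^2V\cdot\nabla_v$ and the commutation $[\nabla_x,\nabla_v]=0$. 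Writing $\nabla_x^{m_1}\nabla_v^{m_2}Bh = B\nabla_x^{m_1}\nabla_v^{m_2}h + (\textrm{two commutator sums})$ and discarding the $B$-principal term by antisymmetry yields \eqref{EqDsptHk2} directly.

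The two mixed identities are handled by the same procedure applied to an asymmetric pairing. For \eqref{EqDsptHk3}, the scalar correction coming from $[\nabla_v,A^*A]=\nabla_v$ produces the lower-order term $\langle\nabla_x^{k-1}\nabla_v h,\nabla_x^k h\rangle$, while integrating by parts through $A$ once on each of the brackets $\langle A^*A\nabla_x^{k-1}\nabla_v h,\nabla_x^k h\rangle$ and $\langle\nabla_x^{k-1}\nabla_v h,A^*A\nabla_x^k h\rangle$ gives two copies of $\langle\nabla_x^{k-1}\nabla_v^2 h,\nabla_x^k\nabla_v h\rangle$. For \eqref{EqDsptHk4}, I would expand $\nabla_x^{k-1}\nabla_v Bh = B\nabla_x^{k-1}\nabla_v h + [\nabla_x^{k-1},B]\nabla_v h + \nabla_x^k h$, where the last summand arises from $[\nabla_v,B]=\nabla_x$ acting on $\nabla_x^{k-1}h$, and analogously expand $\nabla_x^k Bh = B\nabla_x^k h + [\nabla_x^k,B]h$. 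The two $B$-principal terms cancel pairwise via $\langle B\psi,\phi\rangle+\langle\psi,B\phi\rangle=0$, leaving the coercive $\|\nabla_x^k h\|^2$ together with the two Hessian commutator sums of \eqref{EqDsptHk4}.

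The computations are essentially routine once the commutator expansions are in hand; the principal nuisance is purely notational, namely keeping track of which slots of each tensor are paired with which in the Hilbert--Schmidt pairing of Section~2.3, and of the convention that the composite operator $\nabla_x^{m_1-l}(-\nabla^2V\cdot\nabla_v)\nabla_x^{l-1}$ is interpreted componentwise in the sense described there. All manipulations are justified for Schwartz $h$ by standard integration by parts against $\dd\mu$.
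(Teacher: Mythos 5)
Your proposal is correct and follows essentially the same route as the paper's proof: you use the same inductive commutator expansions (the paper's \eqref{EqHkCommu1}--\eqref{EqHkCommu3} are your two bracket formulas for $[\nabla_v^{m_2},B]$ and $[\nabla_x^{m_1},B]$, combined by the Leibniz rule for $[\nabla_x^{m_1}\nabla_v^{m_2},B]$), combine them with the commutation of $\nabla_x$ with $A^*A$ and the scalar correction $[\nabla_v^{m_2},A^*A]=m_2\nabla_v^{m_2}$ (the paper phrases this via its relation \eqref{EqHkCommu4}, but the two statements are equivalent because $A$ commutes with $A^{m_2}$), and discard the $B$-principal terms by antisymmetry and convert the $A^*A$-principal terms by integration by parts. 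The only superficial difference is one of bookkeeping: the paper first states and proves the commutator identities by induction and then substitutes, while you absorb that step into the narration as ``follows immediately.''
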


We may find that $T_{m_1,m_2}^A$ does not contain the term $||\nabla^{k}_xh||^2$ for all $m_1,m_2$ with  a given sum $m_1+m_2=k$, and that the terms in the expression of $T_{m_1,m_2}^B$ might be non-positive. This is the reason to introduce a mixed term $\langle \nabla_x^{k-1}\nabla_v h, \nabla_x^{k}h\rangle$ which helps to obtain $||\nabla^{k}_xh||^2$, as shown in the following equality
\[
\langle\nabla_x^{k-1}\nabla_v Bh, \nabla_x^{k}h \rangle= \langle\nabla_x^{k-1}[\nabla_v, B]h+\nabla_x^{k-1}B\nabla_vh, \nabla_x^{k}h \rangle= ||\nabla^{k}_xh||^2 + \langle\nabla_x^{k-1}B\nabla_vh, \nabla_x^{k}h \rangle
\]
or in the expression in \eqref{EqDsptHk4}.

Before turning to the proof of Lemma \ref{lemDsptHk}, we shall present some detailed computation for $k=2$ first which might be helpful to facilitate the reading. Let us consider the temporal derivative of $\langle\nabla_{xv}^2 h, \nabla^2_x h\rangle + \langle\nabla_{xv}^2 h, \nabla^2_x h\rangle$. The desired equalities corresponding to \eqref{EqDsptHk3} and \eqref{EqDsptHk4} are respectively
\begin{align*}
T_{A}&:=\langle\nabla_{xv}^2 A^*Ah, \nabla^2_x h\rangle + \langle\nabla_{xv}^2 h, \nabla^2_x A^*Ah\rangle
=2\langle \nabla_x\nabla_v^2 h, \nabla_x^2 \nabla_v h\rangle + \langle \nabla^2_{xv} h, \nabla_{x}^2h \rangle,\\
T_{B}&:=\langle\nabla_{xv}^2 Bh, \nabla^2_x h\rangle + \langle\nabla_{xv}^2 h, \nabla^2_x Bh\rangle\\
&=  -\langle \nabla^2V\cdot \nabla^2_v h, \nabla_x^2 h\rangle + ||\nabla_x^2 h||^2 - \langle \nabla^2_{xv} h,\nabla^2V \cdot\nabla^2_{vx} h\rangle - \langle \nabla^2_{xv}h, \nabla_x(\nabla^2V\cdot\nabla_v h)\rangle.
\end{align*}
To this end, we write everything (especially the pairing in the Hilbertian structure) in detailed subscripts. For the term $T_A$, we can use the fact $[A_j,A_k^*]=\delta_{jk}$ to find
	\begin{align*}
		% \nonumber % Remove numbering (before each equation)
		T_{A} &=\sum\limits_{i,j,k}\Big( \langle C_iA_jA_k^*A_k h, C_iC_j h\rangle + \langle C_iA_j h, C_iC_j A_k^*A_k h\rangle \Big)\\
		&=\sum\limits_{i,j,k} \Big(\langle C_i(A_k^*A_j + [A_j,A_k^*])A_k h, C_iC_j h\rangle + \langle C_iA_j h, A_k^*C_iC_jA_k h\rangle\Big) \\
		&= \sum\limits_{i,j,k}\Big(\langle A_k^*C_iA_j A_k h + \delta_{jk}C_iA_k h, C_iC_j h\rangle + \langle A_kC_iA_j h, C_iC_jA_k h \rangle\Big)\\
		&=\sum\limits_{i,j,k} \langle C_iA_j A_k h , A_kC_iC_j h\rangle + \sum\limits_{i,j}\langle C_iA_j h, C_iC_j h\rangle + \sum\limits_{i,j,k}\langle C_iA_j A_k h, C_iC_jA_k h \rangle \\
		&= 2\langle CA^2h, C^2A h\rangle + \langle CA h, C^2 h\rangle.
	\end{align*}
For the term $T_B$, we need to apply the commutation relations between $B$ and $A$, $C$ as listed in Lemma \ref{lemComm}. Recall that $R=[C,B]= -\nabla^2V(x)\cdot\nabla_v$.
	\begin{align*}
		% \nonumber % Remove numbering (before each equation)
		\langle\nabla_{xv}^2 Bh, \nabla^2_x h\rangle
		&= \sum\limits_{i,j}\langle C_iA_jB h, C_iC_j h\rangle = \sum\limits_{i,j}\langle C_i(BA_j + [A_j,B] ) h, C_iC_j h\rangle \\
		&= \sum\limits_{i,j}\langle (BC_iA_j + [C_i,B]A_j + C_iC_j) h, C_iC_j h\rangle \\
		&= \sum\limits_{i,j}\Big(\langle BC_iA_j  h, C_iC_j h\rangle + \langle R_iA_j h, C_iC_j h\rangle + \langle C_iC_j h, C_iC_j h\rangle\Big);\\
		\langle\nabla_{xv}^2 h, \nabla^2_x Bh\rangle
		&=  \sum\limits_{i,j} \langle C_iA_j h, C_iC_j Bh\rangle
		= \sum\limits_{i,j}\langle C_iA_j h, C_i(BC_j + [C_j,B] )h\rangle \\
		&= \sum\limits_{i,j}\langle C_iA_j h, (BC_iC_j + [C_i,B]C_j + C_i R_j)h\rangle\\
		&= \sum\limits_{i,j} \Big(\langle C_iA_j h, BC_iC_j h\rangle + \langle C_iA_j h, R_iC_j h\rangle + \langle C_iA_j h, C_iR_j h\rangle\Big).
	\end{align*}
	Hence we obtain, by the anti-symmetry of $B$,
	\begin{align*}
		% \nonumber % Remove numbering (before each equation)
		T_{B}  %&=& \langle\nabla_{xv}^2 Bh, \nabla^2_x h\rangle + \langle\nabla_{xv}^2 h, \nabla^2_x Bh\rangle \\
		&=\sum\limits_{i,j}\Big(  \langle R_iA_j h, C_iC_j h\rangle + \langle C_iC_j h, C_iC_j h\rangle 
		+ \langle C_iA_j h, R_iC_j h\rangle + \langle C_iA_j h, C_iC_j h\rangle\Big)\\
		&= \langle RAh, C^2h\rangle + ||C^2h||^2 + \langle CA h, RC h\rangle + \langle CAh, CR h\rangle.
	\end{align*}
As a consequence, the equalities \eqref{EqDsptHk3} and \eqref{EqDsptHk4} hold with $k=2$.

\begin{proof}[Proof of Lemma \ref{lemDsptHk}]
(0). We collect some commutation relations first. Let $l_1,l_2$ be positive integers, then
\begin{align}\label{EqHkCommu1}
   &[A^{l_2},B] %:= A^{l_2}B-BA^{l_2}
   =\sum\nolimits^{l_2}_{l=1} A^{l_2-l}CA^{l-1}; \\
 \label{EqHkCommu2}  &[C^{l_1},B]%:= C^{l_1}B -BC^{l_1}
 = \sum\nolimits^{l_1}_{l=1} C^{l_1-l}RC^{l-1} ; \\
  \label{EqHkCommu3} & [C^{l_1}A^{l_2},B]%:=C^{l_1}A^{l_2}B-BC^{l_1}A^{l_2}
  = \sum\nolimits^{l_2}_{l=1} C^{l_1}A^{l_2-l}CA^{l-1} + \sum\nolimits^{l_1}_{l=1} C^{l_1-l}RC^{l-1}A^{l_2}.
\end{align}
They may be deduced by induction from commutation relations between $B$ and $A$ or $C$ . We only provide a proof for \eqref{EqHkCommu2} here, since $\eqref{EqHkCommu1}$ may be proved in the very same manner, and \eqref{EqHkCommu3} follows from the two preceding equalities. For \eqref{EqHkCommu2}, the case of $l_1=1$ is exactly the commutation relation between $B$ and $C$. Now assume that \eqref{EqHkCommu2} holds for $l_1-1$ ($l_1\geq 2$), i.e.
\[
 [C^{l_1-1},B] = \sum\nolimits^{l_1-1}_{l=1} C^{l_1-1-l}RC^{l-1}
\]
then we obtain
\begin{align*}
  [C^{l_1},B] & = C(C^{l_1-1}B) - BC^{l_1}= C [C^{l_1-1},B] + [C,B]C^{l_1-1} \\
   & =C\sum\nolimits^{l_1-1}_{l=1} C^{l_1-1-l}RC^{l-1} + RC^{l_1-1} \mbox{ (by assumption for $l_1-1$ )}\\
   &=\sum\nolimits^{l_1}_{l=1} C^{l_1-l}RC^{l-1}
\end{align*}
as desired. Therefore $\eqref{EqHkCommu2}$ holds for any positive integer $l_1$ by induction.

Another commutation relation might also be useful, namely,
\begin{equation}\label{EqHkCommu4}
  \sum\nolimits_{j=1}^{d} A_{i_1} A_{i_2}\cdots  A_{i_m} A_{j}^* A_{j}= \sum\nolimits_{j=1}^{d}A_{j}^* A_{i_1} A_{i_2}\cdots  A_{i_m} A_{j} +  m A_{i_1} A_{i_2}\cdots  A_{i_m}.
\end{equation}
Omitting the subscripts $i_1,i_2,\cdots,i_m$, it may be written as
\[
 \sum\nolimits_{j=1}^{d} A^m A_{j}^* A_{j}= \sum\nolimits_{j=1}^{d}A_{j}^* A^mA_{j} +  mA^m.
\]
Again it is a simple application of the commutation relation $[A_i,A_j^*]=\delta_{ij}$ and it may be proved by induction on $m$. \vspace{0.314cm}

(1). Now we compute $T_{m_1,m_2}^A$ and $T_{m_1,m_2}^B$. Let us prove \eqref{EqDsptHk1} first,
\begin{align*}
  T_{m_1,m_2}^A
   & = \langle C^{m_1}A^{m_2}A^*A h, C^{m_1}A^{m_2}h\rangle
    = \sum_{j=1}^{d} \langle C^{m_1}A^{m_2}A_j^*A_j h, C^{m_1}A^{m_2}h\rangle\\
   & = \sum_{j=1}^{d} \langle C^{m_1}A_j^*A^{m_2}A_j h, C^{m_1}A^{m_2}h\rangle + m_2 \langle C^{m_1}A^{m_2} h, C^{m_1}A^{m_2}h\rangle \quad\mbox{  by \eqref{EqHkCommu4}}\\
   &= \sum_{j=1}^{d} \langle C^{m_1}A^{m_2}A_j h, A_jC^{m_1}A^{m_2}h\rangle + m_2 || C^{m_1}A^{m_2} h||^2 \\
   & = ||C^{m_1}A^{m_2+1} h||^2 + m_2 || C^{m_1}A^{m_2} h||^2
\end{align*}
where the two last equalities follow from the fact both $A^*$ and $A$ commutate with $C$.

Then we prove \eqref{EqDsptHk2}. As a consequence of \eqref{EqHkCommu3}, we find
\begin{align*}
  T_{m_1,m_2}^B
   & = \langle C^{m_1}A^{m_2}B h, C^{m_1}A^{m_2}h\rangle \\
   & = \sum\limits_{l=1}^{m_2}\langle C^{m_1}A^{m_2-l}CA^{l-1} h, C^{m_1}A^{m_2}h\rangle  \\
   &\quad + \sum\limits_{l=1}^{m_1}\langle C^{m_1-l}RC^{l-1}A^{m_2} h, C^{m_1}A^{m_2}h\rangle + \langle BC^{m_1}A^{m_2} h, C^{m_1}A^{m_2}h\rangle
\end{align*}
Then \eqref{EqDsptHk2} follows since $B$ is anti-symmetric,
\[
\langle BC^{m_1}A^{m_2} h, C^{m_1}A^{m_2}h\rangle=0.
\]

(2). Similarly we proceed with the computations of $T_{mix}^A$ and $T_{mix}^B$. Thanks to \eqref{EqHkCommu4} and $[A,C]=[A^*,C]=0$, we know
\begin{align*}
  T_{mix}^A
  & = \sum\limits_{j=1}^{d} \big(\langle C^{k-1}AA^*_jA_j h, C^kh\rangle + \langle C^{k-1}A h, C^kA^*_jA_j h\rangle \big)\\
  & =\langle C^{k-1}A h, C^kh\rangle + \sum\limits_{j=1}^{d} \langle A^*_jC^{k-1}AA_j h, C^kh\rangle +  \sum\limits_{j=1}^{d} \langle C^{k-1}A h, A^*_jC^kA_j h\rangle\\
  & =\langle C^{k-1}A h, C^kh\rangle + \sum\limits_{j=1}^{d} \langle C^{k-1}AA_j h,C^k A_jh\rangle +  \sum\limits_{j=1}^{d} \langle C^{k-1}A A_j h,C^kA_j h\rangle \\
  &  =\langle C^{k-1}A h, C^kh\rangle + 2\langle C^{k-1}A^2 h,C^k Ah\rangle,
\end{align*}
\begin{align*}
  T_{mix}^B
  &= \langle C^{k-1}AB h, C^kh\rangle + \langle C^{k-1}A h, C^kB h\rangle\\
  &= \langle C^{k}h, C^kh\rangle + \sum\limits_{l=1}^{k-1}\langle C^{k-1-l}RC^{l-1}A h, C^k h\rangle + \langle BC^{k-1}A h, C^kh\rangle\\
  &\quad + \sum\limits_{l=1}^{k}\langle C^{k-1}A h, C^{k-l}RC^{l-1} h\rangle + \langle C^{k-1}A h, BC^kh\rangle \quad\mbox{ by \eqref{EqHkCommu3}}\\
  &= ||C^kh||^2 + \sum\limits_{l=1}^{k-1}\langle C^{k-1-l}RC^{l-1}A h, C^k h\rangle + \sum\limits_{l=1}^{k}\langle C^{k-1}A h, C^{k-l}RC^{l-1} h\rangle
\end{align*}
where the last equality holds since $B$ is anti-symmetric.
\end{proof}

\subsection{The estimates} To abbreviate the notations, let us introduce
\begin{align}
Z&:=\big(\sum\limits_{1\leq l \leq k-1}||h||_{\dot{H}^l}^2 + \sum\limits_{0\leq l\leq k-1}||\nabla_x^l\nabla^{k-l}_v h||^2\big)^{\frac12}\\
W&:=(W_x,W_0,W_1,\cdots, W_{k})^{\mathsf{T}}\in\RR^{k+2}\\
& \mbox{where } W_x:= ||\nabla_x^{k}h||,\quad W_l:=||\nabla_x^l\nabla^{k+1-l}_v h||, \quad 0\leq l\leq k.
\end{align}
So we know $Z^2+ W_x^2 = \sum_{l=1}^{k}||h||_{\dot{H}^l}^2$, and $(W_0,W_1,\cdots,W_k)$ involves all the $(k+1)$-th partial derivatives except $\nabla^{k+1}_xh$. The next step is to give lower bounds of the terms in Lemma~\ref{lemDsptHk} by $Z$ and $W$, which will prove helpful in the next sections. Here we denote a binomial coefficient by$\begin{pmatrix}
	m \\
	l
\end{pmatrix}= \frac{m!}{l!(m-l)!}$ in the lemma. %(Note also that, concerning the coefficients in the estimates, we are more interested in  the dimension-independence rather than in their precise values.)

\begin{lem}\label{lemDsptHkE}
	In the context of Lemma \ref{lemDsptHk}, it holds
	\begin{align}
	T_{i,k-i}^A
	& = ||\nabla^i_x \nabla^{k-i+1}_vh||^2 + (k-i)||\nabla^i_x \nabla^{k-i}_vh||^2
	\notag\\
	T_{i,k-i}^B
	& \geq -(k-i) ||\nabla_x^{i+1}\nabla_v^{k-i-1}h ||\cdot ||\nabla_x^{i}\nabla_v^{k-i}h|| \notag\\
	&\quad -\sum\limits_{l=1}^{i}\sum\limits_{l_1=0}^{i-l}
	\begin{pmatrix}
	i-l \\
	l_1
	\end{pmatrix}
	\sqrt{M}\Big(||\nabla_x^{i-l_1-1}\nabla^{k-i+1}_v h|| +
	||\nabla_x^{i-l_1}\nabla^{k-i+1}_v h||\Big)
	|| \nabla_x^{i}\nabla_v^{k-i}h||
	\notag
	\end{align}
	\begin{align}
	T_{mix}^A
	&\geq -2||\nabla_x^{k-1}\nabla_v^2 h|| \cdot ||\nabla^k_x\nabla_v h|| - ||\nabla_x^{k-1}\nabla_v h|| \cdot ||\nabla^k_xh|| \notag\\
	T_{mix}^B
	&\geq ||\nabla^k_xh||^2  -\sum\limits_{l=1}^{k-1}\sum\limits_{l_1=0}^{k-l-1}
	\begin{pmatrix}
	k-1-l \\
	l_1
	\end{pmatrix}
	\sqrt{M}\big(||\nabla_x^{k-l_1-2}\nabla^2_v h||+ ||\nabla_x^{k-l_1-1}\nabla^2_v h||\big)
	|| \nabla^k_x h|| \notag \\
	& -\sum\limits_{l=1}^{k} \sum\limits_{l_1=0}^{k-l}\begin{pmatrix}
	k-l \\
	l_1
	\end{pmatrix}
	||\nabla^{k-1}_x\nabla_v h||\cdot\sqrt{M}
	\big(||\nabla_v\nabla_x^{k-l_1-1} h||+ ||\nabla_v\nabla_x^{k-l_1} h||\big). \notag
	\end{align}
	As a consequence, we have
	\begin{align}
	\label{EqDsptHk1E}
	T_{i,k-i}^A&\geq W_i^2,\\
	\label{EqDsptHk2E}
	T_{k,0}^B
	&\geq -2^{k+1}\sqrt{M}ZW_x - k\sqrt{M}W_kW_x,\\
	\label{EqDsptHk2E'}
	T_{i,k-i}^B
	&\geq
	- k Z^2-ZW_x - 2^i\sqrt{M}(2Z^2 + ZW_i), \mbox{ where } 0\leq i\leq k-1.
	\end{align}	
	\begin{align}
	\label{EqDsptHk3E}
	T_{mix}^A \geq -2W_{k-1}W_k - ZW_x,
	\end{align}
	\begin{align}
	\label{EqDsptHk4E}
	T_{mix}^B\geq W_x^2 -2^{k}\sqrt{M}ZW_x - (k-1)\sqrt{M}W_{k-1}W_x-  2^{k}\sqrt{M}Z(2Z+W_{k}).
	\end{align}	
\end{lem}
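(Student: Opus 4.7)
The plan is to derive the four detailed lower bounds for $T_{i,k-i}^A$, $T_{i,k-i}^B$, $T_{mix}^A$, $T_{mix}^B$ directly from the identities in Lemma \ref{lemDsptHk}, and then to deduce the consequences \eqref{EqDsptHk1E}--\eqref{EqDsptHk4E} from those bounds by a purely organizational argument. Three ingredients are needed for the detailed bounds: Cauchy--Schwarz on scalar products, the multivariate Leibniz rule applied to the factors $\nabla_x^{j}(\nabla^2 V\cdot \nabla_v g)$ arising from the commutator $R=-\nabla^2 V\cdot\nabla_v$, and Assumption \ref{HypBHess} in the form $||\nabla_x^{l}V\cdot\nabla_v\tilde g||_{L^2(\mu)}\leq \sqrt M(||\nabla_v\tilde g||+||\nabla^2_{xv}\tilde g||)$ applied to carefully chosen auxiliary functions~$\tilde g$ built from partial derivatives of $h$.

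The first inequality is immediate: $T_{i,k-i}^A$ is already an equality and dropping the non-negative lower-order summand yields $T_{i,k-i}^A\ge ||\nabla_x^i\nabla_v^{k-i+1}h||^2=W_i^2$. For $T_{i,k-i}^B$, the first sum in \eqref{EqDsptHk2} collapses, after commuting $\nabla_x$ and $\nabla_v$, to $(k-i)\langle\nabla_x^{i+1}\nabla_v^{k-i-1}h,\nabla_x^i\nabla_v^{k-i}h\rangle$, and Cauchy--Schwarz produces the first advertised term. The second sum is handled by expanding, via Leibniz, $\nabla_x^{i-l}(\nabla^2 V\cdot \nabla_v\,\nabla_x^{l-1}\nabla_v^{k-i}h)$ into $\sum_{l_1=0}^{i-l}\binom{i-l}{l_1}\nabla_x^{i-l-l_1+2}V\cdot \nabla_x^{l_1+l-1}\nabla_v^{k-i+1}h$; each summand is then paired with $\nabla_x^i\nabla_v^{k-i}h$ via Cauchy--Schwarz, and Assumption \ref{HypBHess} applied with $\tilde g=\nabla_x^{l_1+l-1}\nabla_v^{k-i}h$ supplies the factor $\sqrt M(||\nabla_v\tilde g||+||\nabla^2_{xv}\tilde g||)$. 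Reindexing $l_1\mapsto i-l-l_1$ in the outer sum, together with the binomial symmetry $\binom{i-l}{l_1}=\binom{i-l}{i-l-l_1}$, converts this into the exact form stated. The mixed-term identities \eqref{EqDsptHk3} and \eqref{EqDsptHk4} are treated the same way, with the additional observation that the positive term $||\nabla_x^k h||^2=W_x^2$ in $T_{mix}^B$ is produced by the commutator $[\nabla_v,B]=\nabla_x$, exactly as advertised right before the lemma.

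The passage to \eqref{EqDsptHk1E}--\eqref{EqDsptHk4E} then reduces to classifying each Hilbert--Schmidt norm appearing in the detailed bounds. A $k$-th order derivative equals $W_x$ when it coincides with $||\nabla_x^k h||$ and otherwise is dominated by $Z$; a $(k{+}1)$-st order derivative of the special shape $\nabla_x^l\nabla_v^{k+1-l}h$ is precisely $W_l$; every other derivative of order $\le k$ is absorbed into $Z$. The constants $2^i$, $2^{k+1}$ in \eqref{EqDsptHk2E'}, \eqref{EqDsptHk2E}, and \eqref{EqDsptHk4E} come from $\sum_{l_1=0}^{i-l}\binom{i-l}{l_1}=2^{i-l}$ followed by $\sum_{l=1}^{i}2^{i-l}\le 2^i$, while the coefficients $k$ in front of $W_kW_x$ and $W_{k-1}W_x$ collect only the $l_1=0$ contributions, which are the unique index combinations that push a derivative up to order $k{+}1$. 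The main obstacle is not analytical but combinatorial: one must track, across the double Leibniz sums, exactly which combinations $(l,l_1)$ raise the differential order to $k{+}1$ (producing a $W_\ast$ factor) versus those which remain at order $\le k$ (absorbed into $Z$), and the reindexing inside the Leibniz step is the step that is easiest to miscount.
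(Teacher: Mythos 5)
Your proposal follows the same route as the paper: equality for $T^A_{i,k-i}$, Leibniz rule plus Cauchy--Schwarz plus Assumption~\ref{HypBHess} for the $B$-terms, and a classification of each Hilbert--Schmidt norm into the $Z$, $W_x$, $W_l$ families. The only substantive difference is cosmetic: you write the Leibniz expansion with the $l_1$ derivatives falling on the $h$-factor rather than on $V$, then recover the paper's form by the substitution $l_1\mapsto i-l-l_1$ (note this is a reindexing of the \emph{inner} $l_1$-sum, not the outer $l$-sum as you wrote), and the verification of the auxiliary function $\tilde g$ and the range $2\le i-l-l_1+2\le k+1$ needed for the hypothesis of Assumption~\ref{HypBHess} all go through. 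Two small slips to fix: the coefficient in front of $W_{k-1}W_x$ in \eqref{EqDsptHk4E} is $k-1$, not $k$ (it is only for $W_kW_x$ in \eqref{EqDsptHk2E} that the coefficient is $k$), and the extra factor of $2$ that turns $\sum_{l=1}^k 2^{k-l}\le 2^k$ into the advertised $2^{k+1}$ in \eqref{EqDsptHk2E} and \eqref{EqDsptHk4E} comes from the ``$2Z$'' contributions for $l_1\ge 1$, which you gesture at but do not state explicitly.
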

\begin{rmq}[A possible refinement] One can replace the inequality \eqref{IneqVk} in Assumption \ref{HypBHess} by 
	\begin{equation}\label{IneqVk*}
		\int |\nabla_x^{2}D_x^{\alpha} V\cdot\nabla_vg|^2 \dd\mu\leq M^* \bigg(\int |\nabla_vg|^2\dd\mu + \int |\nabla^2_{xv}g|^2\dd\mu\bigg)
	\end{equation}
for every $\alpha $ with $0\leq |\alpha|\leq k-1$. Subsequently, one can proceed to work with derivatives such as $D_x^{\alpha}D_v^{\beta}$ rather than those like $\nabla_x^i\nabla^j_v$; the results in Lemma \ref{lemDsptHk} hold under corresponding modifications, and so do the first part of Lemma \ref{lemDsptHkE}. The price is that  one is then led to a larger (but still sparse in some sense) matrix, rather than a matrix of the $Z$ and $W$. 
\end{rmq}

\begin{proof} The inequality \eqref{EqDsptHk1E} follows from \eqref{EqDsptHk1} in Lemma \ref{lemDsptHk},
	\begin{equation}
	T_{i,k-i}^A= ||\nabla^i_x \nabla^{k-i+1}_vh||^2 + (k-i)||\nabla^i_x \nabla^{k-i}_vh||^2 \geq W_i^2
	\end{equation}
	for $0\leq i\leq k$. Note that $(k-i)||\nabla^i_x \nabla^{k-i}_vh||^2(0\leq i\leq k)$ are discarded for the simplification of our presentation, although they are good terms which might help.
	
	Similarly, for the expressions in $T_{i,k-i}^B, T_{mix}^A$ and $T_{mix}^B$, the term $||\nabla^k_x h||^2$ in $T_{mix}^B$ is the only one that really helps in our proof. All the other terms will be bounded from below by applying Cauchy-Schwarz inequality directly.
	
	Due to the (generalized) Leibnitz rule, it holds that
	\begin{equation}%Concerning derivatives on $-\nabla^2V\cdot\nabla_v$
	\nabla_x^{m_1-l}(-\nabla^2V\cdot\nabla_v)_{\alpha}\nabla_x^{l-1}\nabla_v^{m_2}h
	= -\sum\limits_{l_1=0}^{m_1-l} \begin{pmatrix}
	m_1-l \\
	l_1
	\end{pmatrix}
	\sum\limits_{j=1}^{d}\big(\nabla_x^{l_1} \partial^2_{x_{\alpha}x_j}V\big)(\partial_{v_j}
	\nabla_x^{m_1-l_1-1}\nabla^{m_2}_v h) \notag.
	\end{equation}
Substitute it into \eqref{EqDsptHk2}, we therefore have
	\begin{align}\label{4iB1}
	T_{i,k-i}^B
	&= \sum\limits_{l=1}^{k-i}\langle \nabla_x^{i}\nabla_v^{k-i-l}\nabla_x\nabla_v^{l-1}h, \nabla_x^{i}\nabla_v^{k-i}h\rangle \notag\\
	& \quad -\sum\limits_{l=1}^{i}\sum\limits_{l_1=0}^{i-l} \begin{pmatrix}
	i-l \\
	l_1
	\end{pmatrix}
	\langle \sum\limits_{j=1}^{d}\big(\nabla_x^{l_1} \nabla_x\partial_{x_j}V\big)(\partial_{v_j}
	\nabla_x^{i-l_1-1}\nabla^{k-i}_v h), \nabla_x^{i}\nabla_v^{k-i}h\rangle \notag\\
	&\geq -\sum\limits_{l=1}^{k-i} ||\nabla_x^{i}\nabla_v^{k-i-l}\nabla_x\nabla_v^{l-1}h ||\cdot ||\nabla_x^{i}\nabla_v^{k-i}h|| \notag\\
	&\quad -\sum\limits_{l=1}^{i}\sum\limits_{l_1=0}^{i-l} \begin{pmatrix}
	i-l \\
	l_1
	\end{pmatrix}
	||\sum\limits_{j=1}^{d}\big(\nabla_x^{l_1} \nabla_x\partial_{x_j}V\big)\big(\partial_{v_j}
	\nabla_x^{i-l_1-1}\nabla^{k-i}_v h\big)||\cdot|| \nabla_x^{i}\nabla_v^{k-i}h|| \notag\\
	& \geq -(k-i) ||\nabla_x^{i+1}\nabla_v^{k-i-1}h ||\cdot ||\nabla_x^{i}\nabla_v^{k-i}h|| \notag\\
	&\quad -\sum\limits_{l=1}^{i}\sum\limits_{l_1=0}^{i-l}
	\begin{pmatrix}
	i-l \\
	l_1
	\end{pmatrix}
	\sqrt{M}\Big(||\nabla_x^{i-l_1-1}\nabla^{k-i+1}_v h|| +
	||\nabla_x^{i-l_1}\nabla^{k-i+1}_v h||\Big)
	|| \nabla_x^{i}\nabla_v^{k-i}h||
	\end{align}
	where in the last inequality we have applied
	\begin{align*}
	||\sum\limits_{j=1}^{d}\big(\nabla_x^{l_1} \nabla_x\partial_{x_j}V\big)\big(\partial_{v_j}
	\nabla_x^{i-l_1-1}\nabla^{k-i}_v h\big)||
	&\leq \sqrt{M}\Big(||\nabla_{v}
	\nabla_x^{i-l_1-1}\nabla^{k-i}_v h||+ ||\nabla_{v}
	\nabla_x^{i-l_1-1+1}\nabla^{k-i}_v h||\Big)\\
	&= \sqrt{M}\Big(||
	\nabla_x^{i-l_1-1}\nabla^{k-i+1}_v h||+ ||
	\nabla_x^{i-l_1}\nabla^{k-i+1}_v h||\Big)
	\end{align*}
	which holds true by our assumption \eqref{IneqVk}. Note that if $i=0$, the summation in the last line of \eqref{4iB1} is over a empty set, and hence it equals to zero.
	
	We then reformulate the lower bound in terms of $Z, W_x, W_0, \cdots, $ and $W_k$. First we observe that
	\[
	-(k-i) ||\nabla_x^{i+1}\nabla_v^{k-i-1}h || ||\nabla_x^{i}\nabla_v^{k-i}h||
	\geq\left\{\begin{array}{ll}
	0,& \mbox{ if } i=k; \\
	-ZW_x,& \mbox{ if } i=k-1;\\
	-kZ^2,& \mbox{ if } 0\leq i<k-1,
	\end{array}
	\right.
	\]
	and so for $i<k$,
	\begin{equation}\label{4iB1a}
	-(k-i) ||\nabla_x^{i+1}\nabla_v^{k-i-1}h || ||\nabla_x^{i}\nabla_v^{k-i}h||\geq -kZ^2-ZW_x.
	\end{equation}
	
	Next we observe that for $i\geq 1$
	\[
	\Big(||\nabla_x^{i-l_1-1}\nabla^{k-i+1}_v h|| +
	||\nabla_x^{i-l_1}\nabla^{k-i+1}_v h||\Big)
	|| \nabla_x^{i}\nabla_v^{k-i}h||
	\leq\left\{\begin{array}{ll}
	2Z^2,& \mbox{ if } i<k, l_1\geq 1; \\
	(Z+W_{i})Z, &\mbox{ if } i<k, l_1=0;\\
	2ZW_x,& \mbox{ if } i=k, l_1\geq 1; \\
	(Z+W_k)W_x,& \mbox{ if } i=k, l_1=0.
	\end{array}
	\right.
	\]
	
	In fact, $\nabla_x^{i-l_1-1}\nabla^{k-i+1}_v h$ contains derivatives in $v$-direction and its highest order of derivatives of $h$ is $k-l_1$, so it depends only on $Z$ ($W$ is not involved at all). While the highest order of derivatives of $h$ in $||\nabla_x^{i-l_1}\nabla^{k-i+1}_v h||$ is not greater than $k+1$ with equality if and only if $l_1=0$; moreover, in the equality case, $||\nabla_x^{i-l_1}\nabla^{k-i+1}_v h||$ becomes $W_i$. If $l_1\geq 1$, then $i-l_1 + k-i+1\leq k$ and $k-i+1\geq 1$, so it follows that $||\nabla_x^{i-l_1}\nabla^{k-i+1}_v h||$ can be bounded by $Z$. The other factor $|| \nabla_x^{i}\nabla_v^{k-i}h||$  either becomes $W_x$ (if $i=k$), or occurs in $Z$ and thus can be bounded by $Z$ (if $i\leq k-1$).

	Therefore for $1\leq i\leq k-1$ , it holds
	\begin{align}
	&-\sum\limits_{l=1}^{i}\sum\limits_{l_1=0}^{i-l}
	\begin{pmatrix}
	i-l \\
	l_1
	\end{pmatrix}
	\sqrt{M}\Big(||\nabla_x^{i-l_1-1}\nabla^{k-i+1}_v h|| +
	||\nabla_x^{i-l_1}\nabla^{k-i+1}_v h||\Big)
	|| \nabla_x^{i}\nabla_v^{k-i}h||\notag\\
	\geq& -\sum\limits_{l=1}^{i}\sum\limits_{l_1=0}^{i-l}
	\begin{pmatrix}
	i-l \\
	l_1
	\end{pmatrix}
	\sqrt{M}\Big(
	2Z+W_i\Big)Z\notag\\
	\label{4iB1b}
	\geq& -2^{i}\sqrt{M}(2Z^2 + ZW_i).
	\end{align}
	Whereas for $i=k$, it holds
	\begin{align}
	&-\sum\limits_{l=1}^{i}\sum\limits_{l_1=0}^{i-l}
	\begin{pmatrix}
	i-l \\
	l_1
	\end{pmatrix}
	\sqrt{M}\Big(||\nabla_x^{i-l_1-1}\nabla^{k-i+1}_v h|| +
	||\nabla_x^{i-l_1}\nabla^{k-i+1}_v h||\Big)
	|| \nabla_x^{i}\nabla_v^{k-i}h||\notag\\
	\geq& -\bigg\{\sum\limits_{l=1}^{k}\sum\limits_{l_1=1}^{k-l}
	\begin{pmatrix}
	k-l \\
	l_1
	\end{pmatrix}
	\sqrt{M}\cdot 2ZW_x + \sum\limits_{l=1}^{k}\sum\limits_{l_1=0}\begin{pmatrix}
	k-l \\
	l_1
	\end{pmatrix}
	\sqrt{M}(Z+W_k)W_x\bigg\}\notag\\
	\label{4iB1c}
	\geq& -2^{k+1}\sqrt{M}ZW_x -k\sqrt{M}W_kW_x.
	\end{align}
	
	Then \eqref{EqDsptHk2E} and \eqref{EqDsptHk2E'} follows by the lower bounds in \eqref{4iB1}, \eqref{4iB1a}, \eqref{4iB1b} and \eqref{4iB1c}.
	\vspace{0.314cm}

	\textbf{Step 2.} Next we give lower bounds for the mixed terms $T_{mix}^A $ and $T_{mix}^B$, and thus for $T_{mix}^A +T_{mix}^B=\langle \nabla^{k-1}_x\nabla_vLh,\nabla^k_xh\rangle + \langle \nabla^{k-1}_x\nabla_vh,\nabla^k_xLh\rangle$. By \eqref{EqDsptHk3}, we find
	\begin{align}
	T_{mix}^A &\geq -2||\nabla_x^{k-1}\nabla_v^2 h|| \cdot ||\nabla^k_x\nabla_v h|| - ||\nabla_x^{k-1}\nabla_v h|| \cdot ||\nabla^k_xh| \notag\\
	\label{4iiA1}
	&\geq -2W_{k-1}W_k - ZW_x.
	\end{align}
	By \eqref{EqDsptHk4} and the generalized Leibniz rule, we have
	\begin{equation}\label{4iiB0}
	T_{mix}^B= ||\nabla^k_xh||^2 + (\Rom{1}) + (\Rom{2})
	\end{equation}
	where $(\Rom{1}), (\Rom{2})$ are given and bounded from below as follows (again due to \eqref{IneqVk}),
	\begin{align*}
	(\Rom{1})% =\sum\limits_{l=1}^{k-1} \langle \nabla_x^{k-l-1}(-\nabla^2V\cdot\nabla_v)\nabla_x^{l-1}\nabla_v h, \nabla^k_x h\rangle
	&= -\sum\limits_{l=1}^{k-1}\sum\limits_{l_1=0}^{k-l-1}
	\begin{pmatrix}
	k-1-l \\
	l_1
	\end{pmatrix}
	\langle\sum\limits_{j=1}^{d}\big(\nabla_x^{l_1}\nabla_x \partial_{x_j} V\big)\big(\partial_{v_j}\nabla_x^{k-l_1-2}\nabla_v h\big), \nabla^k_x h\rangle\\
	&\geq - \sum\limits_{l=1}^{k-1}\sum\limits_{l_1=0}^{k-l-1}
	\begin{pmatrix}
	k-1-l \\
	l_1
	\end{pmatrix}
	||\sum\limits_{j=1}^{d}\big(\nabla_x^{l_1}\nabla_x \partial_{x_j} V\big)\big(\partial_{v_j}\nabla_x^{k-l_1-2}\nabla_v h\big)||
	\cdot|| \nabla^k_x h||  \\
	&\geq - \sum\limits_{l=1}^{k-1}\sum\limits_{l_1=0}^{k-l-1}
	\begin{pmatrix}
	k-1-l \\
	l_1
	\end{pmatrix}
	\sqrt{M}\big(||\nabla_x^{k-l_1-2}\nabla^2_v h||+ ||\nabla_x^{k-l_1-1}\nabla^2_v h||\big)
	|| \nabla^k_x h||,
	\end{align*}
	and
	\begin{align*}
	(\Rom{2}) %=\sum\limits_{l=1}^{k-1} \langle \nabla^{k-1}_x\nabla_v h, \nabla_x^{k-l}(-\nabla^2V\cdot\nabla_v)\nabla_x^{l-1} h\rangle
	&= -\sum\limits_{l=1}^{k} \sum\limits_{l_1=0}^{k-l}\begin{pmatrix}
	k-l \\
	l_1
	\end{pmatrix}\langle \nabla^{k-1}_x\nabla_v h, \sum\limits_{j=1}^{d}\big(\nabla_x^{l_1}\nabla_x\partial_{x_j}V\big)
	\big(\partial_{v_j}\nabla_x^{k-l_1-1} h\big)\rangle  \\
	&\geq -\sum\limits_{l=1}^{k} \sum\limits_{l_1=0}^{k-l}\begin{pmatrix}
	k-l \\
	l_1
	\end{pmatrix}
	||\nabla^{k-1}_x\nabla_v h||\cdot
	||\sum\limits_{j=1}^{d}\big(\nabla_x^{l_1}\nabla_x\partial_{x_j}V\big)
	\big(\partial_{v_j}\nabla_x^{k-l_1-1} h\big)||\\
	&\geq -\sum\limits_{l=1}^{k} \sum\limits_{l_1=0}^{k-l}\begin{pmatrix}
	k-l \\
	l_1
	\end{pmatrix}
	||\nabla^{k-1}_x\nabla_v h||\cdot\sqrt{M}
	\big(||\nabla_v\nabla_x^{k-l_1-1} h||+ ||\nabla_v\nabla_x^{k-l_1} h||\big).
	\end{align*}

	Now we give lower bounds of $(\Rom{1})$ and $(\Rom{2})$ in terms of $Z$ and $W$. Note that
	\[
	||\nabla_x^{k-l_1-2}\nabla^2_v h||+ ||\nabla_x^{k-l_1-1}\nabla^2_v h||
	\leq\left\{\begin{array}{ll}
	2Z,& \mbox{ if } l_1\geq 1; \\
	Z+W_{k-1}, &\mbox{ if } l_1=0.
	\end{array}
	\right.
	\]
	Hence we have
	\begin{align}
	(\Rom{1})&\geq - \sum\limits_{l=1}^{k-1}\sum\limits_{l_1=0}^{k-l-1}
	\begin{pmatrix}
	k-1-l \\
	l_1
	\end{pmatrix}
	\sqrt{M}\cdot 2ZW_x - \sum\limits_{l=1}^{k-1}\sum\limits_{l_1=0}
	\begin{pmatrix}
	k-1-l \\
	l_1
	\end{pmatrix}
	\sqrt{M}W_{k-1}W_x \notag\\
	\label{4iiB1}
	&\geq -2^{k}\sqrt{M}ZW_x - (k-1)\sqrt{M}W_{k-1}W_x.
	\end{align}
	Similarly, for $(\Rom{2})$, since
	\[
	||\nabla_v\nabla_x^{k-l_1-1} h||+ ||\nabla_v\nabla_x^{k-l_1} h||
	\leq\left\{\begin{array}{ll}
	2Z,& \mbox{ if } l_1\geq 1; \\
	Z+W_{k}, &\mbox{ if } l_1=0,
	\end{array}
	\right.
	\]
	we obtain
	\begin{align}
	(\Rom{2})
	&\geq -\sum\limits_{l=1}^{k} \sum\limits_{l_1=0}^{k-l}\begin{pmatrix}
	k-l \\
	l_1
	\end{pmatrix}
	Z\cdot\sqrt{M}
	\big(2Z+W_{k}\big).\notag\\
	\label{4iiB2}
	&\geq -  2^{k}\sqrt{M}Z(2Z+W_{k}).
	\end{align}
	Therefore we can deduce the inequality \eqref{EqDsptHk4E}.
\end{proof}

\section{Hypocoercivity in $H^k(\mu)$}
\label{SectCoer}
Now we are ready to prove  Theorem \ref{thmHigh}. The following coercive estimate is the essential result to this end. The key of the proof is to construct an auxiliary Sobolev norm. Let us recall that the $H^l(\mu)$-seminorm $||\cdot||_{\dot{H}^l}$ is defined by
\[
||h||_{\dot{H}^l}^2 := \sum\limits_{i=0}^{ l}||\nabla_x^i\nabla_v^{l-i}h||_{L^2(\mu)}^2
= \sum\limits_{i=0}^{ l}\sum\limits_{|\alpha|= i, |\beta|=l-i} \int |D_x^{\alpha}D_v^{\beta}h|^2\dd\mu
\]
where $\alpha,\beta$ are multi-indexes.

\begin{prop}\label{PropCoerHk}
	Under the assumptions in Theorem \ref{thmHigh}. There exists a twisted $H^k(\mu)$-norm, denoted by $((\cdot,\cdot))^{\frac12}_{H^k}$, which is equivalent to the usual $H^k(\mu)$-norm and satisfies an estimate
	\begin{equation}\label{EqCoerHk}
	((h,Lh))_{H^k}\geq \lambda_{k,0} (\sum\limits_{1\leq l \leq k}||h||_{\dot{H}^l}^2 + \sum\limits_{0\leq l\leq k}||\nabla_x^l\nabla^{k+1-l}_v h||^2)
	\end{equation}
	for some constant $\lambda_{k,0}>0$ and for any rapidly decreasing function $h$ (i.e. $h\in \mathcal{S}(\RR^{2d})$). Here $\lambda_{k,0}$ depends only on $k$ and $M$. As a consequence, it holds the following coercive estimate
	\begin{equation}\label{EqCoerHk*}
	((h,Lh))_{H^k} \geq \lambda_k ((h-\int h\dd\mu,h-\int h\dd\mu))_{H^k}
	\end{equation}
	for some constant $\lambda_k>0$ and for all function $h\in \mathcal{S}(\RR^{2d})$.
\end{prop}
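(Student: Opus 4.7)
The plan is to construct the twisted norm $((\cdot,\cdot))_{H^k}$ inductively on $k$, building the top-order block on top of an already-coercive $H^{k-1}$ twisted norm. The base case $k=1$ is Villani's Theorem \ref{thmVillani}: one chooses $\omega_{1,0},\omega_{1,1},\omega_1 > 0$ with $\omega_{1,0}\omega_{1,1} > \omega_1^2$ small enough so that $((h,Lh))_{H^1}$ controls $\|\nabla_v h\|^2 + \|\nabla_x h\|^2 + \|\nabla_v^2 h\|^2 + \|\nabla^2_{xv} h\|^2$. For the inductive step, assuming coefficients at order $\leq k-1$ have been fixed, I would set
\[
((h,h))_{H^k} := ((h,h))_{H^{k-1}} + \sum_{i=0}^{k}\omega_{k,i}\|\nabla_x^i\nabla_v^{k-i}h\|^2 + 2\omega_k\langle\nabla_x^{k-1}\nabla_v h,\nabla_x^k h\rangle,
\]
and impose $\omega_{k,k-1}\omega_{k,k} > \omega_k^2$ (plus smallness relative to the rest) to keep this equivalent to the usual $H^k(\mu)$ norm.

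Next I would compute $((h,Lh))_{H^k}$ by expanding each temporal derivative via Lemma \ref{lemDsptHkE}. The diagonal $A$-terms give $\sum_i \omega_{k,i} T^A_{i,k-i} \geq \sum_i\omega_{k,i}W_i^2$, supplying all ``missing'' directions except $W_x = \|\nabla_x^k h\|$. The mixed block $2\omega_k(T_{mix}^A + T_{mix}^B)$ precisely produces the missing $2\omega_k W_x^2$, up to cross terms of the form $\sqrt{M}\,ZW_x$, $\sqrt{M}\,W_{k-1}W_x$, and $W_{k-1}W_k$. The $B$-diagonal terms $\omega_{k,i}T^B_{i,k-i}$ produce only bilinear cross pieces controlled by $Z^2$, $ZW_x$, and $\sqrt{M}\,ZW_i$. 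Combined with the inductive lower bound on $((h,Lh))_{H^{k-1}}$, the total dissipation takes the form of a positive-definite quadratic form in $(W_0,\ldots,W_k,W_x)$ plus $Z^2$, provided the cross terms can be absorbed.

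The absorption is where the hierarchical choice of coefficients matters, and this is the step I expect to be the main obstacle: all cross terms are linear in $\sqrt{M}$ (or in no parameter at all) while the diagonal controls $\omega_{k,i} W_i^2$ and $2\omega_k W_x^2$ depend on free parameters, so the natural cascade is to fix $\omega_{k,k}$ first, then pick $\omega_{k,k-1},\ldots,\omega_{k,0}$ successively small, and finally $\omega_k \ll \omega_{k,k-1}\omega_{k,k}$ small enough that the mixed block contributes a net positive multiple of $W_x^2$ while its cross-term leakage into the $W_i$'s is controlled by $\omega_{k,i}W_i^2$ via Young's inequality. This gives an effective matrix inequality of the Schur-complement type on $(W_0,\ldots,W_k,W_x,Z)$, for which positivity reduces to a triangular cascade of scalar inequalities among the $\omega$'s; a general diagonal-dominance / quadratic-form lemma (the one deferred to Appendix B is presumably used here) closes the argument and yields
\[
((h,Lh))_{H^k} \;\geq\; \lambda_{k,0}\,\Bigl(\sum_{1\leq l\leq k}\|h\|_{\dot H^l}^2 + \sum_{0\leq l\leq k}\|\nabla_x^l\nabla_v^{k+1-l}h\|^2\Bigr).
\]

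Finally, the coercivity \eqref{EqCoerHk*} follows from \eqref{EqCoerHk} by the Poincaré inequality of Assumption \ref{HypPI}, tensorized with the standard Gaussian Poincaré inequality in $v$: applied to $h - \int h\,\dd\mu$ and iteratively to the derivatives $D_x^\alpha D_v^\beta h$, it shows that the right-hand side of \eqref{EqCoerHk} dominates $\|h - \int h\,\dd\mu\|_{H^k(\mu)}^2$, and hence also the equivalent twisted norm, up to a constant depending only on $\kappa$, $k$, $M$.
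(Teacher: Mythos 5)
Your proposal reproduces the overall architecture of the paper's proof --- induction on $k$, building the twisted $\dot{H}^k$ block on top of a coercive $((\cdot,\cdot))_{H^{k-1}}$, lower-bounding the dissipation via Lemma~\ref{lemDsptHkE}, closing the quadratic form via the appendix lemma (Lemma~\ref{LemApp1}), and using Poincar\'e for the consequence~\eqref{EqCoerHk*} --- so the structure is right. However, your description of the coefficient cascade is inverted and, as stated, would not close the cross-term absorption. In the paper, the ratios $\omega_{k,i}/\omega_k$ are fixed up-front as functions of $k$ and $M$ alone: $\omega_{k,i}=64k^2M\,\omega_k$ for $0\le i\le k-1$ (\emph{large} multiples of $\omega_k$) and $\omega_{k,k}=\omega_k/(16k^2M)$ (a \emph{small} multiple); only $\omega_k$ itself is tuned small at the very end, against the inductive constant $\lambda_{k-1,0}$. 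You instead propose to fix $\omega_{k,k}$ first and to take $\omega_{k,k-1},\ldots,\omega_{k,0}$ ``successively small,'' which makes $\omega_{k,k}$ the largest of the $\omega_{k,i}$'s --- the opposite ordering to what is needed.

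The obstruction is concrete. The cross term $-(k-1)\sqrt{M}\,\omega_k\,W_{k-1}W_x$ in $\omega_k T_{mix}^B$ can only be absorbed by $\omega_k W_x^2+\omega_{k,k-1}W_{k-1}^2$ if $\omega_{k,k-1}\gtrsim k^2 M\,\omega_k$, so $\omega_{k,k-1}$ must be a \emph{large} multiple of $\omega_k$. Meanwhile the cross term $-k\sqrt{M}\,\omega_{k,k}\,W_k W_x$ coming from $T^B_{k,0}$ (see~\eqref{EqDsptHk2E}) --- which you overlooked when you wrote that the $B$-diagonal terms only produce $Z$-type cross pieces --- forces $\omega_{k,k}\lesssim\omega_k/(k^2M)$, so $\omega_{k,k}$ must be a \emph{small} multiple of $\omega_k$. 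The positivity of the quadratic form in $(W_x,W_{k-1},W_k)$ therefore requires $\omega_{k,k}\ll\omega_k\ll\omega_{k,k-1}$, which is exactly what Lemma~\ref{LemApp1} encodes with $b=\omega_k$, $a=\omega_{k,k-1}$, $c=\omega_{k,k}$ (after replacing $M$ by $k^2M$). Once the coefficient ratios are corrected, the rest of your plan --- take $\omega_k$ small enough that $\lambda_{k-1,0}Z^2$ absorbs the $K_1 Z^2+K_2 Z|W|$ leakage --- does close the argument; and for~\eqref{EqCoerHk*} a single application of the tensorized Poincar\'e inequality to control $\|h-\int h\,\dd\mu\|^2$ suffices, since all the derivative norms of order $1$ to $k$ already appear on the right of~\eqref{EqCoerHk}.
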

\begin{rmq}
    Note also that the only difference between
	$\sum_{l=0}^{ k}||\nabla_x^l\nabla^{k+1-l}_v h||^2$ and $||h||_{\dot{H}^{k+1}}^2$ is that the former expression does not contain the term $||\nabla^{k+1}_xh||^2$ while the latter one does.
\end{rmq}

Theorem \ref{thmHigh} is a direct corollary of the proposition above.
\begin{proof}[Proof of Theorem \ref{thmHigh}]
	It follows from the coercive estimate in Proposition \ref{PropCoerHk}. Note that as a solution to the kinetic Fokker-Planck equation, $h_t$ is smooth at positive time $t>0$. By Proposition \ref{PropCoerHk} and a standard approximation procedure, we know that at positive time
	\[
	((h,Lh))_{H^k(\mu)}\geq \lambda ((h-\int h\dd\mu, h- \int h\dd\mu))_{H^k(\mu)}.
	\]
	Consequently Gronwall's lemma implies that
	\[
	((h-\int h\dd\mu, h- \int h\dd\mu))_{H^k(\mu)}
	\leq
	e^{-2\lambda t}((h_0-\int h_0\dd\mu, h_0- \int h_0\dd\mu))_{H^k(\mu)}
	\]
	Note that  the norm induced by $((\cdot, \cdot))_{H^k(\mu)}$ is equivalent to the usual $H^k(\mu)$ norm with explicit constants. The theorem then follows.
\end{proof}

Define the twisted $H^l(\mu)$-seminorms $((h,h))^{\frac12}_{\dot{H}^l}$ by
\begin{equation}\label{EqTwHkSemi}
((h,h))_{\dot{H}^l}
:= \sum\limits_{0\leq i \leq l} \omega_{l,i}||\nabla^{l-i}_v\nabla^i_x h||^2 + 2\omega_{l} \langle \nabla_x^{l-1}\nabla_v h, \nabla_x^{l}h\rangle
\end{equation}
with all the constants $\omega_{l,i} (0\leq i\leq l)$, $\omega_{l}$ being strictly positive and satisfying
\begin{equation}
\omega_{l}^2<\omega_{l,l-1}\omega_{l,l}.
\end{equation}
It is then clear that such a seminorm is equivalent to the usual $H^l(\mu)$-seminorm in the sense that there exist constants $c_1=c_1(l),c_2=c_2(l) >0$ such that
\[
c_1 ||h||_{\dot{H}^{l}}^2 \leq ((h,h))_{\dot{H}^l} \leq c_2 ||h||_{\dot{H}^{l}}^2.
\]
For example, one may have a look at the case $l=1$ or $l=2$:
\begin{align*}
((h,h))_{\dot{H}^1}&:= \omega_{1,0}||\nabla_v h||^2 + \omega_{1,1}||\nabla_xh||^2 + 2\omega_1 \langle \nabla_v h, \nabla_xh\rangle,\\
((h,h))_{\dot{H}^2}&:= \omega_{2,0}||\nabla_v^2h||^2 + \omega_{2,1} ||\nabla_{xv}^2h||^2 +\omega_{2,2}||\nabla_x^2h||^2 + 2 \omega_{2}\langle\nabla_{xv}^2h ,\nabla_x^2 h\rangle.
\end{align*}
We can take $\omega_{1,0}=a$, $\omega_{1,1}=c$ and $\omega_{1}=b$ with $a,b,c$ given in Villani's auxiliary $H^1$ norm. 

We then define the twisted $H^k(\mu)$-norm by
\begin{align}\label{EqTwHk}
((h,h))_{H^k}&:= ||h||^2 + \sum\limits_{l=1}^{k}((h,h))_{\dot{H}^l}\\
&= \sum\limits_{0\leq i\leq j\leq k}
\omega_{j,i}||\nabla^{j-i}_v\nabla^i_x h||^2
+ \sum\limits_{1\leq j\leq k}2\omega_{j}
\langle \nabla_x^{j-1}\nabla_v h, \nabla_x^{j}h\rangle
\end{align}
with suitable coefficients $\omega_j$ ($1\leq j\leq k$), $\omega_{j,i}$($0\leq i\leq j\leq k$) to be determined later (by an induction argument on $k$). Here we set $\omega_{0,0}=1$ as a convention.

\begin{proof}[Proof of Proposition \ref{PropCoerHk}]
	We prove it by induction on $k$. The proposition for $k=1$ is verified by the construction in \cite{Villani}. Or we can start with the induction basis by $k=0$: with $\omega_{0,0}=1$ and the usual $L^2(\mu)$-norm, we have $\lambda_{0,0}=1$ since
	\[
	\langle h, Lh\rangle = ||\nabla_v h||^2.
	\]
	Now consider $k\geq 1$. The induction hypothesis asserts that the coercive estimate holds true for $k-1$, i.e., there exists a seminorm $((\cdot,\cdot))^{\frac12}_{H^{k-1}}$ and a constant $\lambda_{k-1,0}>0$ (depending only on $k-1$ and $M$) such that
	\begin{equation}\label{4-1}
	((h,Lh))_{H^{k-1}} \geq \lambda_{k-1,0} (\sum\limits_{1\leq l \leq k-1}||h||_{\dot{H}^l}^2 + \sum\limits_{0\leq l\leq k-1}||\nabla_x^l\nabla^{k-l}_v h||^2).
	\end{equation}
	We shall prove the existence of $\omega_{k}, \omega_{k,0},\omega_{k,1},\cdots,\omega_{k,k}$ in \eqref{EqTwHkSemi} such that the norm defined by
	\[
	((h,Lh))_{H^{k}}= ((h,Lh))_{H^{k-1}} + ((h,Lh))_{\dot{H}^{k}}
	\]
	satisfies a coercive estimate
	\begin{equation}\label{4-2}
	((h,Lh))_{H^{k}} \geq \lambda_{k,0} (\sum\limits_{1\leq l \leq k}||h||_{\dot{H}^l}^2 + \sum\limits_{0\leq l\leq k}||\nabla_x^l\nabla^{k+1-l}_v h||^2)
	\end{equation}
	for some $\lambda_{k,0}>0$ which depends only on $k$ and $M$.
	
	Now we rephrase \eqref{4-1} and \eqref{4-2} in terms of $Z$ and $W$. For convenience we recall that
	\[
	Z:=\big(\sum\limits_{1\leq l \leq k-1}||h||_{\dot{H}^l}^2 + \sum\limits_{0\leq l\leq k-1}||\nabla_x^l\nabla^{k-l}_v h||^2\big)^{\frac12}
	\]
	and $W:=(W_x,W_0,W_1,\cdots, W_{k})^{\mathsf{T}}\in\RR^{k+2}$ where
	\[
	W_x= ||\nabla_x^{k}h||,\quad W_l=||\nabla_x^l\nabla^{k+1-l}_v h||, \quad 0\leq l\leq k.
	\]
	So we have $Z^2+ W_x^2 = \sum_{l=1}^{k}||h||_{\dot{H}^l}^2$. Then the induction hypothesis \eqref{4-1} is equivalent to
	\[
	((h,Lh))_{H^{k-1}}\geq \lambda_{k-1,0}Z^2,
	\]
	and the desired estimate \eqref{4-2} is equivalent to
	\[((h,Lh))_{H^{k}}\geq \lambda_{k,0}(Z^2 + |W|^2).
	\]
	The idea here is to distinguish, on the one hand, the derivatives of order not greater than $k$ except $\nabla^k_x$, and on the other hand, $\nabla^k_x$ and the derivatives of order $k+1$ except $\nabla^{k+1}_x$. The former collection of derivatives already appeared in the coercive estimate of $((h,Lh))_{{H}^{k-1}}$, while the latter one is the new ones coming from $((h,Lh))_{\dot{H}^k}$. Such a division will prove helpful in the induction procedure .
	
	Then we shall bound $((h,Lh))_{\dot{H}^k}$ from below in terms of $Z$ and $W$, more precisely, we shall prove
	\begin{align}\label{EqHkGoal}
	((h,Lh))_{\dot{H}^k} &\geq \omega_k(-K_1 Z^2-K_2 Z|W| + \eta |W|^2)
	% ((h,Lh))_{\dot{H}^k} &\geq -K_1\omega_k |Z|^2-K_2\omega_k |Z|(|W_x|+|W_1|+\cdots+|W_k|)\notag \\
	%& \quad + \eta \omega_k (|W_x|^2+|W_1|^2+\cdots+|W_k|^2)
	\end{align}
	for some constants  $K_1,K_2$, and $\eta= \frac{1}{64k ^2M} $. Later in the proof, we shall see that $\omega_k$ can be chosen as small as one desires (without any modification of $K_1,K_2$ and $\eta$), and so that we are able to %apply Lemma \ref{lemLAHk} to
	obtain a coercive estimate in form of \eqref{4-2}.
	\vspace{0.314cm}
	
	By the definition \eqref{EqTwHkSemi}, we have
	\begin{align*}
	((h,Lh))_{\dot{H}^k}
	&:= \sum\limits_{0\leq i \leq k} \omega_{k,i}\langle\nabla^{k-i}_v\nabla^i_x Lh, \nabla^{k-i}_v\nabla^i_x h\rangle \\
	&\quad+ \omega_{k} \big(\langle \nabla_x^{k-1}\nabla_v L h, \nabla_x^{k}h\rangle + \langle \nabla_x^{k-1}\nabla_v h, \nabla_x^{k}Lh\rangle\big)\\
	& = \sum\limits_{0\leq i \leq k} \omega_{k,i}(T_{i,k-i}^A + T_{i,k-i}^B ) + \omega_{k}(T_{mix}^A + T_{mix}^B)
	\end{align*}
	where $T_{i,k-i}^A, T_{i,k-i}^B,T_{mix}^A,T_{mix}^B$ are defined in Section \ref{SectPre}(see Lemma \ref{lemDsptHk} for instance).
	
	\textbf{Step 1.} First of all, let us set some relation between the coefficients in $((h,h))_{\dot{H}^k}$. We may assume $M\geq 1$. Set
	\[
	\omega_{k,i}= \left\{\begin{aligned}64k^2M\omega_k, &\mbox{ for } 0\leq i\leq  k-1; \\  \frac{1}{16k^2M}\omega_{k}, &\mbox{ for } i=k,
	\end{aligned}\right.
	\]
	where $\omega_k$ will be determined later. Then by Lemma \ref{LemApp1} in the appendix, we know that in the sense of quadratic forms
	\begin{align}\label{EqCoerSwHk0}
	\begin{pmatrix}
	\omega_k         & 0            &0\\
	-k\omega_k\sqrt{M}      & \omega_{k,k-1}          &0\\
	-2k\omega_{k,k}\sqrt{M}       & -2\omega_k   &\omega_{k,k}
	\end{pmatrix}
	&\geq\text{Diag}(\frac{\omega_k}{2} ,\frac{\omega_{k,k-1}}{4},\frac{\omega_{k,k}}{4}).
	\end{align}
	It follows that
	\begin{align}
	Q(W)&:=\omega_{k}W_x^2 + \sum_{i=0}^{k}\omega_{k,i}W_i^2 - (k-1)\omega_{k}\sqrt{M}W_xW_{k-1}- k\omega_{k,k}\sqrt{M}W_xW_{k} - 2\omega_kW_{k-1}W_k \notag\\
	\label{IneqQW}
	&\geq \frac{1}{4}\big(\omega_k W_x^2+ \sum_{i=0}^{k}\omega_{k,i}W_i^2\big).
	\end{align}
	In particular, it holds
	\begin{equation}\label{EqCoerSwHk}
	Q(W)\geq \frac14 \cdot \frac{\omega_k}{16k^2M}|W|^2 = \eta \omega_k|W|^2.
	\end{equation}
	where $\eta = \frac{1}{64k^2M}$ as claimed in \eqref{EqHkGoal}.
	\vspace{0.314cm}
	
	\textbf{Step 2.} We then apply the estimates in Lemma \ref{lemDsptHkE} to bound $((h,Lh))_{\dot{H}^k}$ from below. By the estimates \eqref{EqDsptHk1E},\eqref{EqDsptHk2E} and \eqref{EqDsptHk2E'}, it holds
	\begin{align}
	\sum\limits_{i=0}^{k}\omega_{k,i}(T_{i,k-i}^A + T_{i,k-i}^B)
	&\geq \sum\limits_{i=0}^{k}\omega_{k,i}W_i^2 -  \sum\limits_{i=0}^{k-1}\omega_{k,i}\left[kZ^2 + ZW_x +2^i\sqrt{M}(2Z^2 + ZW_i) \right] \notag\\
	&\quad -2^{k+1}\sqrt{M}\omega_{k,k}ZW_x - k\sqrt{M}\omega_{k,k}W_kW_x.  \notag\\
	&\geq \sum\limits_{i=0}^{k}\omega_{k,i}W_i^2  -K'_1 \omega_kZ^2 -K'_2\omega_k Z|W|- k\sqrt{M}\omega_{k,k}W_kW_x \notag
	\end{align}
	for some constants $K'_{1}$ and $K'_2$. We stress that the constants $K'_{1}$ and $K'_2$  depend only on $k$ and $M$, since the ratio $\omega_{k,i}/\omega_k$ is a constant depending only on $k$ and $M$ (independent of $\omega_k$).
	
	Similarly, it follows from \eqref{EqDsptHk3E} and \eqref{EqDsptHk4E} that
	\begin{align}
	&\quad\quad  T_{mix}^A + T_{mix}^B \notag\\
	&\geq -2W_{k-1}W_k - ZW_x + W_x^2-2^{k}\sqrt{M}ZW_x - (k-1)\sqrt{M}W_{k-1}W_x-  2^{k}\sqrt{M}Z(2Z+W_{k})\notag\\
	&\geq W_x^2-2W_{k-1}W_k- (k-1)\sqrt{M}W_{k-1}W_x - K_1''Z^2 - K_2''Z|W|\notag
	\end{align}
	where the constants $K_1''$ and $K_2''$ depend only on $k$ and $M$.
	
	We now come to conclude that the desired estimate \eqref{EqHkGoal} holds true for some constants $K_1$ and $K_2$ given by
	\[
	K_1=K'_1+ K_1'',\quad K_2=K'_2+ K_2''
	\]
	which depend only on $k,M$. And it is clear that the constants can be explicitly computed. Indeed,
	\begin{align}
	&\quad ((h,Lh))_{\dot{H}^k}
	= \sum\limits_{0\leq i \leq k} \omega_{k,i}(T_{i,k-i}^A + T_{i,k-i}^B ) + \omega_{k}(T_{mix}^A + T_{mix}^B)\notag\\
	&\geq \sum\limits_{i=0}^{k}\omega_{k,i}W_i^2  -K'_1 \omega_kZ^2 -K'_2\omega_k Z|W|- k\sqrt{M}\omega_{k,k}W_kW_x\notag\\
	& \quad + \omega_k\left[W_x^2-2W_{k-1}W_k- (k-1)\sqrt{M}W_{k-1}W_x - K_1''Z^2 - K_2''Z|W|\right]\notag\\
	&=Q(W)- \omega_k K_1Z^2 - \omega_k K_2 Z|W| \notag\\
	&\geq \omega_k\big\{\eta |W|^2 - K_1Z^2 -  K_2 Z|W|\big\} \notag
	\end{align}
	where $Q(W)$ was introduced in \eqref{IneqQW} and it satisfies \eqref{EqCoerSwHk}, c.f. Step 1.  By the induction hypothesis,
	\[
	((h,Lh))_{H^{k-1}}\geq \lambda_{k-1,0} Z^2,
	\]
	and so we obtain
	\begin{align*}
	((h,Lh))_{H^{k}}&=((h,Lh))_{H^{k-1}}+ ((h,Lh))_{\dot{H}^{k}}\\
	&\geq \lambda_{k-1,0} Z^2 + \omega_k\big\{\eta |W|^2 - K_1Z^2 -  K_2 Z|W|\big\}.
	\end{align*}
	Then it holds that
	\[
	((h,Lh))_{H^{k}}\geq \lambda_{k,0}(Z^2 + W^2)
	\]
	where
	\[
	\omega_k= \min\bigg\{\frac{\lambda_{k-1,0}}{2K_1}, \frac{3\eta\lambda_{k-1,0}}{4K_2^2}\bigg\}, \quad
	\lambda_{k,0}= \min\bigg\{ \frac{\lambda_{k-1,0}}{4}, \frac{\omega_k \eta}{4}\bigg\}.
	\]
	By now, the proof of \eqref{EqCoerHk} is thus finished.
	\vspace{0.314cm}
	
	\textbf{Step 3.} To prove its consequence \eqref{EqCoerHk*}, it suffices to observe that the constructed twisted $H^{l}(\mu)$-seminorm associated to $ ((\cdot,\cdot))_{\dot{H}^{l}}$ is bounded by (in fact, equivalent to) the usual $H^{l}(\mu)$-seminorm up to a constant, for each $1\leq l\leq ~k$. Indeed, in the setting of Step 1, we may find $\omega_{l,l}\omega_{l,l-1}=4\omega_{l}^2$ for each $1\leq l\leq k$, and then
	\[
	2\omega_{l}|\langle \nabla^{l-1}_x\nabla_v h, \nabla_x^l h\rangle| %\leq\sqrt{\omega_{l,l}\omega_{l,l-1}} ||\nabla^{l-1}_x\nabla_v h||||\nabla_x^l h||
	\leq \frac{1}{2}\big(\omega_{l,l-1}||\nabla^{l-1}_x\nabla_v h||^2+
	\omega_{l,l}||\nabla_x^l h||^2\big),
	\]
	which follows that the seminorm defined by \eqref{EqTwHkSemi} satisfies
	\[
	((h,h))_{\dot{H}^{l}}\leq \frac32 \max\{\omega_{l,l-1}, \omega_{l,l}\} ||h||_{\dot{H}^l}^2.
	\]
	 By the tensorisation property of Poincar\'e inequality (c.f. \cite[Proposition 4.3.1]{BGL14}),
	\[
	||h-\int h\dd\mu||^2 \leq ||\nabla_vh||^2 + \kappa||\nabla_x h||^2
	\]
    where $\kappa$ is given in the Poincar\'e inequality in the assumption \ref{HypPI}.
	
	Gathering the above inequalities, we obtain that
	\[
	((h-\int h\dd\mu,h-\int h\dd\mu))_{{H}^{k}} \leq C (Z^2+ W_x^2)
	\]
	with $C= \max\{1, \kappa, \frac32 \omega_{l,l-1}, \frac32 \omega_{l,l} \,|\, 1\leq l\leq k\}$. Finally we conclude by \eqref{EqCoerHk} that
	\[
	((h,Lh))_{H^{k}}\geq \frac{\lambda_{k,0}}{C}((h-\int h\dd\mu,h-\int h\dd\mu))_{{H}^{k}}
	\]
	which completes the proof of \eqref{EqCoerHk*}.
	
\end{proof}

\begin{rmq}
	Although we have no attempt to give a sharp rate of convergence, we comment on possible refinements of the constants and thus the rate of convergence in the proof of Proposition \ref{PropCoerHk}. The first possibility is that, just as the inequality \eqref{IneqQW} we have proved  for $Q(W)$, we can also find coefficients $\omega_k, \omega_{k,i}(0\leq i\leq k)$ such that
	\[
	((h,Lh))_{\dot{H}^k}\geq \delta\big(\omega_kW_x^2 + \sum\nolimits_{i=0}^{k}\omega_{k,i}W_i^2\big)
	\]
	holds for some $\delta>0$ and for all $h\in \mathcal{S}(\RR^{2d})$. Another possibility is to refine the lower bounds in terms of $Z$ in the proof, for instance, we may distinguish $||\nabla_v h||$, $||\nabla_vh||$, $||\nabla_v^2h||$, $\cdots$, $||\nabla_v^{k}h||$, $||\nabla^{k-1}_v\nabla_x h||$, $\cdots$, $||\nabla^v\nabla_x^{k-1}h||$ (i.e. roughly all the terms appearing in $Z$). It may result in a matrix of very large size. But it is still possible (although technically complicated) to find coefficients such that a coercive estimate holds. The rate of convergence would be better especially when $k$ or $M$ becomes very large.
\end{rmq}

\section{Global hypoellipticity in $H^k(\mu)$}
\label{SectElli}
Guided by \cite[Section A.21.2]{Villani}, we prove the global hypoelliptic estimates for higher derivatives. We closely follow a method due to F.~H\'erau which was used to prove global estimates in $H^1(\mu)$ in short time by constructing a special Lyapunov functional.

To illustrate the efficiency of H\'erau's method, we start by the heat equation. Let $f= f(t,x)$ solve the heat equation in $\RR^d_x$ with $L^2(\dd x)$ initial datum, i.e. $\partial_t f = \Delta f$. For our purpose, we assume that $f$ is smooth and well-behaved at infinity. Then the following a~priori estimates hold,
\[
\frac{\dd}{\dd t} \int f^2\dd x \leq -2 \int |\nabla f|^2 \dd x, \quad \frac{\dd}{\dd t} \int |\nabla f|^2\dd x \leq -2 \int |\nabla^2 f|^2 \dd x.
\]
Hence for the functional
\[
\mathcal{F}(t,f(t,\cdot)):= \int f^2\dd x + 2t \int |\nabla f|^2\dd x,
\]
it holds by the a priori estimates that
\[
\frac{\dd}{\dd t}\mathcal{F}(t,f)\leq -2t \int |\nabla^2 f|^2 \dd x \leq 0.
\]
As a consequence, the functional $\mathcal{F}$ is non-increasing in time. In particular, we know that
\[
2t \int |\nabla f(t,x)|^2\dd x \leq \int  f(0,x)^2\dd x, \quad \mbox{ for } t>0.
\]
By standard approximation, this a priori estimate holds for all solutions of the heat equation with $L^2$ initial datum. It express that $\int |\nabla f(t,x)|^2\dd x $ is of order $t^{-1}$ and, in particular, $f(t,\cdot)\in H^1(\dd x)$ for any $t>0$, provided that $f(0,\cdot)\in L^2(\dd x)$. Note also that it implies higher order global regularity, i.e.
\[
||f(t,\cdot)||^2_{\dot{H}^k(\dd x)} \leq C t^{-k}||f(0,\cdot)||^2_{L^2(\dd x)}, \quad \mbox{ for } t>0.
\]
Indeed, this follows from a simple iteration since the derivatives of $f$ also satisfy the very same heat equation.
\medskip

Now we explain how H\'erau's method can be employed to demonstrate regularity of higher order derivatives for the solutions to \eqref{Eq-kFP*}. The crucial point of his method is to assign a carefully-chosen time-dependent coefficient to each derivative. Following this spirit, we define the functional
\begin{equation}\label{EqHe}
\mathcal{F}_l(t,h_t)
:= \sum\limits_{0\leq i \leq l} \sigma_{l,i}t^{l+2i}||\nabla^{l-i}_v\nabla^i_x h_t||^2 + 2\sigma_{l}t^{3l-1} \langle \nabla_x^{l-1}\nabla_v h_t, \nabla_x^{l}h_t\rangle
\end{equation}
where the to-be-determined coefficients $\sigma_{l,i}, \sigma_l$ ($0\leq i\leq l, 1\leq l$) are strictly positive constants and
\begin{equation}
\sigma_{l,l-1}\sigma_{l,l}> \sigma_l^2.
\end{equation}
(Note that the preceding inequality implies that $t^{l+2i}||\nabla^{l-i}_v\nabla^i_x h_t||^2$ is bounded by $\mathcal{F}_l(t,h_t)$ up to certain constant, see the proof below.) We also set $\sigma_{0,0}:=1$ and
\begin{equation}
\mathcal{F}_{0}(t,h_t):= ||h_t||^2.
\end{equation}
It is a direct computation to show that
\[
-\frac{\dd}{\dd t}\mathcal{F}_{0}(t,h_t)= -\frac{\dd}{\dd t}||h_t||^2 = 2||\nabla_vh_t||^2.
\]
In other words, $\mathcal{F}_{0}$ is a Lyapunov functional for the evolution \eqref{Eq-kFP*}. We can also write
\[
\mathcal{F}_1(t,h_t)= \sigma_{1,0}t||\nabla_vh_t||^2 + \sigma_{1,1}t^{3}||\nabla_xh_t||^2 + 2\sigma_1 t^2\langle \nabla_v h_t, \nabla_x h_t\rangle
\]
and then the Lyapunov functional constructed by H\'erau is in the form of $\mathcal{F}_0(t,h_t)+ \mathcal{F}_1(t,h_t)$ with suitable coefficients.

In the sequel, we shall construct the coefficients of  $\mathcal{F}_{l}$($1\leq l\leq k$) by induction, and then $\sum\nolimits_{l=0}^{k}\mathcal{F}_{l}(t,h_t)$ will be the the Lyapunov functional designed for the hypoellipticity estimates in $H^k(\mu)$.  We shall also prove a strengthened version of the monotonicity along the line, namely,

\begin{prop}\label{propReg}
	In the context of Theorem \ref{thmReg}, there exist strictly positive constants $\sigma_{l,i}, \sigma_l$ ($0\leq i\leq l, 1\leq l\leq k$), defining $\mathcal{F}_l$ ($1\leq l\leq k$) as in \eqref{EqHe}, such that for any solution $h_t=h(t,\cdot)$ to the kinetic Fokker-Planck equation \eqref{Eq-kFP*}, for $0\leq t \leq 1$,
	\begin{equation}\label{EstHe}
	-\frac{\dd}{\dd t}\sum_{l=0}^{k}\mathcal{F}_{l}(t,h_t)
	\geq \frac{\Lambda_{k}}{t}\left( \sum_{1\leq l\leq k}
	 t^{3l}||\nabla_x^lh_t||^2+\sum\limits_{0\leq i \leq l\leq k} t^{l+1+2i}||\nabla^{l-i+1}_v\nabla^i_x h_t||^2\right)
	\end{equation}
	where $\Lambda_k>0$ is some constant depending only on $k$ and $M$. Moreover, the coefficients $\sigma_{l,i}, \sigma_l$ ($0\leq i\leq l, 1\leq l\leq k$) depend only on $l$ and $M$, and
	\[
	\sigma_{l,l-1}\sigma_{l,l}\geq 4\sigma_l^2, \quad \forall 1\leq l\leq k.
	\]
\end{prop}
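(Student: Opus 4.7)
The plan is to prove Proposition \ref{propReg} by induction on $k$, paralleling the proof of Proposition \ref{PropCoerHk} but with time-dependent weights tailored to the parabolic scaling $\|\nabla_v\|\sim t^{-1/2}$, $\|\nabla_x\|\sim t^{-3/2}$ of the kinetic Fokker-Planck equation. The base case $k=0$ is immediate: $-\frac{d}{dt}\|h_t\|^2=2\langle h_t, Lh_t\rangle=2\|\nabla_v h_t\|^2$ yields \eqref{EstHe} with $\Lambda_0=2$. For the inductive step, assume the conclusion at level $k-1$ and determine $\sigma_{k,i}$ ($0\le i\le k$) and $\sigma_k$ so that adding $\mathcal{F}_k$ preserves the strong monotonicity at level $k$.

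I would compute $-\frac{d}{dt}\mathcal{F}_k$ via $\partial_t h=-Lh$ and the product rule, splitting it into the explicit $t$-derivative of the polynomial weights, producing
\[
-\sum_{i=0}^{k}(k+2i)\sigma_{k,i}t^{k+2i-1}\|\nabla^{k-i}_v\nabla^i_x h\|^2-2(3k-1)\sigma_k t^{3k-2}\langle\nabla^{k-1}_x\nabla_v h,\nabla^k_x h\rangle,
\]
and the Hilbert contributions $2\sigma_{k,i}t^{k+2i}(T^A_{i,k-i}+T^B_{i,k-i})$ together with $2\sigma_k t^{3k-1}(T_{mix}^A+T_{mix}^B)$. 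Applying Lemma \ref{lemDsptHkE}, the positive leading contributions are $2\sigma_{k,i}t^{k+2i}W_i^2$ (from \eqref{EqDsptHk1E}) and $2\sigma_k t^{3k-1}\|\nabla^k_x h\|^2$ (from the $\|\nabla^k_x h\|^2$ leading term in \eqref{EqDsptHk4E}); these already match the targeted form $\Lambda_k t^{-1}\cdot t^{k+1+2i}\|\nabla^{k-i+1}_v\nabla^i_x h\|^2$ and $\Lambda_k t^{-1}\cdot t^{3k}\|\nabla^k_x h\|^2$ on the right-hand side of \eqref{EstHe}.

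The remaining indefinite terms split into two groups handled differently. The \emph{same-order cross terms} $W_iW_j$, $W_{k-1}W_k$, $W_{k-1}W_x$, $W_kW_x$ (weighted by appropriate $t$-powers) are absorbed into a quadratic form in $W=(W_x,W_0,\ldots,W_k)$ by choosing $\sigma_{k,i}=C(k,M)\sigma_k$ for $0\le i\le k-1$ (large) and $\sigma_{k,k}=c(k,M)\sigma_k$ (small), exactly as in Step~1 of the proof of Proposition \ref{PropCoerHk}; the resulting matrix inequality of the type \eqref{EqCoerSwHk0}--\eqref{EqCoerSwHk} yields a definite lower bound $\eta\sigma_k|W|^2$ with $\eta$ depending only on $k,M$, and simultaneously enforces the constraint $\sigma_{k,k-1}\sigma_{k,k}\ge 4\sigma_k^2$. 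The \emph{cross terms involving $Z$} (lower-order derivatives) produced by Lemma \ref{lemDsptHkE}, together with the explicit-weight terms $-(k+2i)\sigma_{k,i}t^{k+2i-1}\|\nabla^{k-i}_v\nabla^i_x h\|^2$ for $i\le k-1$, carry total $t$-weight of the form $t\cdot t^{(k-1)+1+2i'}\|\cdots\|^2$ with $i'\le k-1$, and so (using $t\le 1$) are dominated by the induction hypothesis's dissipation at level $k-1$ provided $\sigma_k$ is taken sufficiently small. The single mixed explicit-weight term $-2(3k-1)\sigma_k t^{3k-2}\langle\nabla^{k-1}_x\nabla_v h,\nabla^k_x h\rangle$ is split by Young's inequality into $\varepsilon W_x^2$ (absorbed into the $W$-quadratic form) plus a $Z$-type piece (absorbed via the induction hypothesis).

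Choosing $\sigma_k$ small enough that the lower-order absorptions consume at most half of $\Lambda_{k-1}t^{-1}(\cdots)$ and setting $\Lambda_k$ to be the minimum of $\Lambda_{k-1}/2$ and a suitable fraction of $\eta\sigma_k$ and $\sigma_k$ then yields \eqref{EstHe} at level $k$. The main obstacle will be the coherent bookkeeping of three distinct $t$-scalings: the weights $t^{k+2i}$ and $t^{3k-1}$ inside $\mathcal{F}_k$, the targeted $t^{-1}$ factor on the right-hand side of \eqref{EstHe}, and the extra factor of $t\le 1$ needed so that $Z$-type bad terms fit the induction scaling at level $k-1$. A subsidiary technical difficulty is maintaining the ratio constraint $\sigma_{k,k-1}\sigma_{k,k}\ge 4\sigma_k^2$ (analogous to $\omega_{l,l-1}\omega_{l,l}=4\omega_l^2$ used in Step~3 of the proof of Proposition \ref{PropCoerHk}) while simultaneously keeping $\sigma_k$ small enough for the lower-order absorption — both constraints need to survive the induction across all $k$.
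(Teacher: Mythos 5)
Your proposal is correct and follows essentially the same route as the paper's proof: induction on $k$, splitting $-\frac{\dd}{\dd t}\mathcal{F}_k$ into the weight-derivative part and the Hilbertian part, applying Lemma~\ref{lemDsptHkE}, fixing the ratios $\sigma_{k,i}/\sigma_k$ (large for $i<k$, small for $i=k$) so the top-order cross terms are controlled by a positive quadratic form via Lemma~\ref{LemApp1}, and finally shrinking $\sigma_k$ to absorb the lower-order remainders into the inductive dissipation. The bookkeeping you flag as the main difficulty is handled in the paper by packaging the $t$-weights directly into rescaled quantities $\mathcal{Z}_{i+j,i}:=t^{(j+3i)/2}\|\nabla^j_v\nabla^i_x h_t\|$, which turns all the $t$-power matching (with $t\le 1$ absorbing the surplus powers) into a transcription of the $Z,W$ estimates from Section~3.
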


With this proposition at hand, it is easy to prove the counterparts of H\'erau-Villani's global hypoellipticity estimates in $H^k(\mu)$.

\begin{proof}[Proof of Theorem \ref{thmReg}]
By Proposition \ref{propReg}, $\sum\nolimits_{l=0}^{k}\mathcal{F}_{l}(t,h_t)$ is monotonic increasing in time, and so
\[
\sum\nolimits_{l=0}^{k}\mathcal{F}_{l}(t,h_t)
\leq \mathcal{F}_{0}(0,h_0)= ||h_0||^2.
\]%first a priori, then by approximation, just as in the argument for heat equation

Note that  $\sigma_{l,l-1}\sigma_{l,l}\geq 4 \sigma_l^2$ ($1\leq l\leq k$) implies
\[
 2\sigma_{l}t^{3l-1} \left|\langle \nabla_x^{l-1}\nabla_v h_t, \nabla_x^{l}h_t\rangle\right|
 \leq \frac12 \left[ \sigma_{l,l-1}t^{3l-2}||\nabla^{l-1}_x\nabla_v h_t||^2 + \sigma_{l,l}t^{3l}||\nabla^l_x h_t||^2 \right]
\]
and hence for $1\leq l\leq k$,
\[
 \frac12\sum\limits_{0\leq i \leq l} \sigma_{l,i}t^{l+2i}||\nabla^{l-i}_v\nabla^i_x h_t||^2
 \leq \mathcal{F}_l(t,h_t).
\]
Therefore, we conclude that for $0\leq t\leq 1$
\begin{equation}
||h_t||^2+\frac12\sum\limits_{l=1}^{k}\sum\limits_{0\leq i \leq l} \sigma_{l,i}t^{l+2i}||\nabla^{l-i}_v\nabla^i_x h_t||^2
\leq ||h_0||^2
\end{equation}
which completes the proof.
\end{proof}

Now we turn to
\begin{proof}[Proof of Proposition \ref{propReg}]
Hereafter we assume that $0< t\leq 1$. The coefficients $\sigma_{l,i}, \sigma_l$ ($0\leq i\leq l\leq k$) will be constructed by induction on $k$. For $k=0$, we choose $\sigma_{0,0}=1$ and then $\Lambda_0 =2$, since
\[
-\frac{\dd}{\dd t}\mathcal{F}_{0}(t,h_t)= 2||\nabla_vh_t||^2.
\]
Now suppose the proposition holds true for $k-1$ with $k\geq 1$. We divide the construction of $\sigma_{k,i}, \sigma_k$ ($0\leq i\leq k$) and $\Lambda_{k}$ into several steps.

We start by a reformulation which might help to simplify the presentation, especially in the estimates below. Define $\mathcal{Z}_{i+j,i}>0$ by
\begin{equation}
\mathcal{Z}_{i+j,i}^2 := t^{j+3i} ||\nabla^{j}_v\nabla^i_x h_t||^2.
\end{equation}
We also write
\begin{equation}
\mathcal{Z}^2:= \sum_{1\leq l\leq k-1}
\mathcal{Z}_{l,l}^2 + \sum\limits_{0\leq i \leq l\leq k-1} \mathcal{Z}_{l+1,i}^2.
\end{equation}
(For example, in the case of $k=1$, $\mathcal{Z}^2= t||\nabla_vh||^2$; in the case of $k=2$, $\mathcal{Z}^2= t||\nabla_vh||^2 + t^3||\nabla_xh||^2 + t^2||\nabla_v^2h||^2 + t^4 ||\nabla_{xv}^2h||^2$.)

Then the desired estimate \eqref{EstHe} becomes
\begin{equation}
-\frac{\dd}{\dd t}\sum_{l=0}^{k}\mathcal{F}_{l}(t,h_t)
\geq \frac{\Lambda_{k}}{t}
\left(\mathcal{Z}^2 +  \mathcal{Z}_{k,k}^2 + \sum\limits_{0\leq i \leq k} \mathcal{Z}_{k+1,i}^2 \right),
\end{equation}
while the induction hypothesis asserts that
\begin{equation}
-\frac{\dd}{\dd t}\sum_{l=0}^{k-1}\mathcal{F}_{l}(t,h_t)
\geq \frac{\Lambda_{k-1}}{t} \mathcal{Z}^2
\end{equation} with some constant $\Lambda_{k-1}$ depending only on $k-1$ and $M$. To finish the induction, it suffices to give a lower bound for the temporal derivative for $\mathcal{F}_k(t,h_t)$. The rough idea is that the induction hypothesis will help when we choose the coefficients $\sigma_{k}, \sigma_{k,i}$ being relatively small. 	

{\bf Step 1.} We may assume $M\geq 1$. We set the relations between $\sigma_{k,i}, \sigma_k$ ($0\leq i\leq k$),
\begin{align}\label{EqCoefSigma}
\sigma_{k,i}= \left\{\begin{array}{ll}
64k^2M\sigma_k, &\mbox{ for } 0\leq i\leq  k-1; \\  \frac{1}{16k^2M}\sigma_{k}, &\mbox{ for } i=k
\end{array}\right.
\end{align}
with $\sigma_k$ to-be-determined. Then by Lemma \ref{LemApp1} in the appendix,
\[
\begin{pmatrix}
\sigma_k    & 0       & 0      \\
-k\sqrt{M}\sigma_k  &\sigma_{k,k-1}    &0\\
-2k\sqrt{M}\sigma_{k,k}   & -2\sigma_k  &\sigma_{k,k}
\end{pmatrix}
\geq
\text{Diag}(\frac12 \sigma_k,  \frac14 \sigma_{k,k-1}, \frac14 \sigma_{k,k})
\]
in the sense of quadratic forms. Note that $\frac32 k\sigma_{k,k}\leq \frac14 \sigma_k$, then
\begin{align}
\mathcal{Q}
&:= -\frac{3}{2} k\sigma_{k,k}\mathcal{Z}_{k,k}^2+ \sum\limits_{i=0}^{k}
\sigma_{k,i}\mathcal{Z}_{k+1,i}^2- k\sqrt{M}\sigma_{k,k}\mathcal{Z}_{k+1,k}\mathcal{Z}_{k,k} -2\sigma_k\mathcal{Z}_{k+1,k-1}\mathcal{Z}_{k+1,k}\notag\\
&\quad\quad \label{EstHeQ}
+\sigma_k \mathcal{Z}_{k,k}^2  -(k-1)\sigma_k\sqrt{M}\mathcal{Z}_{k+1,k-1}\mathcal{Z}_{k,k}\\
&\geq (\sigma_k-\frac{3}{2} k\sigma_{k,k})\mathcal{Z}_{k,k}^2+ \sum\limits_{i=0}^{k}
\sigma_{k,i}\mathcal{Z}_{k+1,i}^2- 2k\sqrt{M}\sigma_{k,k}\mathcal{Z}_{k+1,k}\mathcal{Z}_{k,k} -2\sigma_k\mathcal{Z}_{k+1,k-1}\mathcal{Z}_{k+1,k}\notag\\
&\quad\quad
 -k\sigma_k\sqrt{M}\mathcal{Z}_{k+1,k-1}\mathcal{Z}_{k,k}\notag\\
&\geq \frac14\left(\sigma_k \mathcal{Z}_{k,k}^2 + \sum\limits_{i=0}^{k}
\sigma_{k,i}\mathcal{Z}_{k+1,i}^2\right).
\end{align}

{\bf Step 2.} We present a lower bound for the temporal derivative for $\mathcal{F}_k(t,h_t)$.
\begin{align*}
&\quad -\frac12\frac{\dd}{\dd t}\mathcal{F}_{k}(t,h_t)
= -\frac12\frac{\dd}{\dd t} \left\{\sum\limits_{0\leq i \leq k} \sigma_{k,i}t^{k+2i}||\nabla^{k-i}_v\nabla^i_x h_t||^2
+ 2\sigma_{k}t^{3k-1} \langle \nabla_x^{k-1}\nabla_v h_t, \nabla_x^{k}h_t\rangle\right\}\\
&= -\frac{1}{2t}\left\{\sum\limits_{0\leq i \leq k} (k+2i)\sigma_{k,i}t^{k+2i}||\nabla^{k-i}_v\nabla^i_x h_t||^2
+ 2(3k-1)\sigma_{k}t^{3k-1} \langle \nabla_x^{k-1}\nabla_v h_t, \nabla_x^{k}h_t\rangle\right\} \\
&\quad + \left\{\sum\limits_{0\leq i \leq k} \sigma_{k,i}t^{k+2i}(T_{i,k-i}^A + T_{i,k-i}^B)
+ \sigma_{k}t^{3k-1}(T_{mix}^A +T_{mix}^B)\right\}
\notag\\
&=: -\frac{1}{2t} (\Rom{1}) + (\Rom{2}).
\end{align*}

Now we deal with each term in this expression. First, we denote
\begin{align*}
(\Rom{1}_a) &:= 3k\sigma_{k,k}t^{3k}||\nabla^k_x h_t||^2 = 3k\sigma_{k,k}\mathcal{Z}_{k,k}^2,\\
(\Rom{1}_b) &:= \sum\limits_{0\leq i \leq k-1} (k+2i)\sigma_{k,i}t^{k+2i}||\nabla^{k-i}_v\nabla^i_x h_t||^2
= \sum\limits_{0\leq i \leq k-1} (k+2i)\sigma_{k,i} \mathcal{Z}_{k,i}^2, \\
(\Rom{1}_c) &:=  2(3k-1)\sigma_{k}t^{3k-1} ||\nabla_x^{k-1}\nabla_v h_t||\cdot ||\nabla_x^{k}h_t||
=  2(3k-1)\sigma_{k} \mathcal{Z}_{k,k-1}\mathcal{Z}_{k,k},
\end{align*}
and hence by Cauchy-Schwarz inequality,
\begin{align}\label{EstHe1}
(\Rom{1})
&\leq
%\sum\limits_{0\leq i \leq k} (k+2i)\sigma_{k,i}t^{k+2i}||\nabla^{k-i}_v\nabla^i_x h_t||^2 + 2(3k-1)\sigma_{k}t^{3k-1} ||\nabla_x^{k-1}\nabla_v h_t||\cdot ||\nabla_x^{k}h_t||  \\&=:
(\Rom{1}_a) + (\Rom{1}_b) + (\Rom{1}_c).
\end{align}

Next we decompose the terms in $(\Rom{2})$ by $(\Rom{2})=(\Rom{2}_a) + (\Rom{2}_b)$ with
\begin{align*}
(\Rom{2}_a):=\sum\limits_{0\leq i \leq k} \sigma_{k,i}t^{k+2i}(T_{i,k-i}^A + T_{i,k-i}^B),
\quad
(\Rom{2}_b):=\sigma_{k}t^{3k-1}(T_{mix}^A +T_{mix}^B).
\end{align*}
By the estimates in  Lemma \ref{lemDsptHkE}, we have
\begin{align}
(\Rom{2}_a)&\geq
\sum\limits_{0\leq i \leq k}
\sigma_{k,i}t^{k+2i}\bigg\{\left(||\nabla^i_x \nabla^{k-i+1}_vh||^2 + (k-i)||\nabla^i_x \nabla^{k-i}_vh||^2\right)  \notag\\
&-(k-i) ||\nabla_x^{i+1}\nabla_v^{k-i-1}h ||\cdot ||\nabla_x^{i}\nabla_v^{k-i}h|| \notag\\
& -\sum\limits_{l=1}^{i}\sum\limits_{l_1=0}^{i-l}
\begin{pmatrix}
i-l \\
l_1
\end{pmatrix}
\sqrt{M}\Big(||\nabla_x^{i-l_1-1}\nabla^{k-i+1}_v h|| +
||\nabla_x^{i-l_1}\nabla^{k-i+1}_v h||\Big)
|| \nabla_x^{i}\nabla_v^{k-i}h||
 \bigg\}\notag\\
&\geq \sum\limits_{0\leq i \leq k}
\frac{\sigma_{k,i}}{t}  \bigg\{  t^{k+2i+1}||\nabla^i_x \nabla^{k-i+1}_vh||^2   \notag\\
&-(k-i)\cdot t^{\frac{k}{2}+i+1}||\nabla_x^{i+1}\nabla_v^{k-i-1}h ||\cdot t^{\frac{k}{2}+i}||\nabla_x^{i}\nabla_v^{k-i}h|| \notag\\
& -\sum\limits_{l=1}^{i}\sum\limits_{l_1=0}^{i-l}
\begin{pmatrix}
i-l \\
l_1
\end{pmatrix}
\sqrt{M}\Big(t^{(k+2i-3l_1-2)/2}||\nabla_x^{i-l_1-1}\nabla^{k-i+1}_v h||  \notag\\
&\quad\quad\quad + t^{(k+2i-3l_1+1)/2}||\nabla_x^{i-l_1}\nabla^{k-i+1}_v h||\Big)
\cdot t^{(k+2i)/2}|| \nabla_x^{i}\nabla_v^{k-i}h||
\bigg\}\notag
\end{align}
where the last inequality holds since we assumed that $t\in (0,1]$. In terms of $\mathcal{Z}_{l,i}$, we obtain
\begin{align}
(\Rom{2}_a)&\geq
\sum\limits_{0\leq i \leq k}
\frac{\sigma_{k,i}}{t}  \bigg\{ \mathcal{Z}_{k+1,i}^2- (k-i)\mathcal{Z}_{k,i+1}\mathcal{Z}_{k,i}
\notag\\
&\quad\quad -\sum\limits_{l=1}^{i}\sum\limits_{l_1=0}^{i-l}
\begin{pmatrix}
i-l \\
l_1
\end{pmatrix}
\sqrt{M}\Big(\mathcal{Z}_{k-l_1,i-l_1-1} +
 \mathcal{Z}_{k-l_1+1,i-l_1} \Big)
\mathcal{Z}_{k,i}
\bigg\}
\end{align}
Then we can proceed as in the proof of Lemma \ref{lemDsptHkE} to conclude that
\begin{align}
(\Rom{2}_a)&\geq \frac1t \bigg\{\sum\limits_{i=0}^{k}
\sigma_{k,i}\mathcal{Z}_{k+1,i}^2 -  \sum\limits_{i=0}^{k-1}\sigma_{k,i}\left[k\mathcal{Z}^2 + \mathcal{Z}\mathcal{Z}_{k,k} +2^i\sqrt{M}(2\mathcal{Z}^2 + \mathcal{Z}\mathcal{Z}_{k+1,i}) \right] \notag\\
&\label{EstHe2a}
\quad\quad -2^{k+1}\sqrt{M}\sigma_{k,k}\mathcal{Z}\mathcal{Z}_{k,k} - k\sqrt{M}\sigma_{k,k}\mathcal{Z}_{k+1,k}\mathcal{Z}_{k,k}\bigg\}.
\end{align}
Indeed, such a inequality is a direct consequence of the following observations,
\[
(k-i)\mathcal{Z}_{k,i+1}\mathcal{Z}_{k,i}
\leq\left\{\begin{array}{ll}
0,& \mbox{ if } i=k; \\
\mathcal{Z}\mathcal{Z}_{k,k},& \mbox{ if } i=k-1;\\
k\mathcal{Z}^2,& \mbox{ if } 0\leq i<k-1,
\end{array}
\right.
\]
and
\begin{align*}
\Big(\mathcal{Z}_{k-l_1,i-l_1-1} +
\mathcal{Z}_{k-l_1+1,i-l_1} \Big)
\mathcal{Z}_{k,i}
\leq
\left\{
\begin{array}{ll}
2\mathcal{Z}^2,& \mbox{ if } i<k, l_1\geq 1; \\
(\mathcal{Z}+\mathcal{Z}_{k+1,i})\mathcal{Z}, &\mbox{ if } i<k, l_1=0;\\
2\mathcal{Z}\mathcal{Z}_{k,k},& \mbox{ if } i=k, l_1\geq 1; \\
(\mathcal{Z}+\mathcal{Z}_{k+1,k})\mathcal{Z}_{k,k},& \mbox{ if } i=k, l_1=0.
\end{array}
\right.
\end{align*}
Then the inequality follows from a direct summation, as we did in Lemma \ref{lemDsptHkE}, since
\begin{align*}
\sum\limits_{0\leq i \leq k}
\sigma_{k,i} \big[\mathcal{Z}_{k+1,i}^2- (k-i)\mathcal{Z}_{k,i+1}\mathcal{Z}_{k,i} \big] \geq \sum\limits_{0\leq i \leq k}
\sigma_{k,i} \mathcal{Z}_{k+1,i}^2 - \sum\limits_{0\leq i \leq k-1}
\sigma_{k,i} (k\mathcal{Z}^2+ \mathcal{Z}_{k,k}\mathcal{Z}),
\end{align*}
and
\begin{align*}
&\quad-\sum\limits_{0\leq i \leq k}\sigma_{k,i}\sum\limits_{l=1}^{i}\sum\limits_{l_1=0}^{i-l}
\begin{pmatrix}
i-l \\
l_1
\end{pmatrix}
\sqrt{M}\Big(\mathcal{Z}_{k-l_1,i-l_1-1} +
\mathcal{Z}_{k-l_1+1,i-l_1} \Big)
\mathcal{Z}_{k,i} \\
&\geq -\sum\limits_{0\leq i \leq k-1} \sigma_{k,i} \sum\limits_{l=1}^{i}\sum\limits_{l_1=0}^{i-l}
\begin{pmatrix}
i-l \\
l_1
\end{pmatrix}
\sqrt{M}\Big(2\mathcal{Z} +
\mathcal{Z}_{k+1,i} \Big)
\mathcal{Z} \\
&\quad - \sigma_{k,k}\sum\limits_{l=1}^{k}\sum\limits_{l_1=0}^{k-l}
\begin{pmatrix}
k-l \\
l_1
\end{pmatrix}
\sqrt{M}\cdot 2\mathcal{Z}
\mathcal{Z}_{k,k}  - k\sigma_{k,k}\sqrt{M}\mathcal{Z}_{k+1,k}
\mathcal{Z}_{k,k}\\
&\geq -  \sum\limits_{i=0}^{k-1} 2^i\sigma_{k,i}\sqrt{M} (2\mathcal{Z} + \mathcal{Z}_{k+1,i})\mathcal{Z} -2^{k+1}\sqrt{M}\sigma_{k,k}\mathcal{Z}\mathcal{Z}_{k,k} - k\sqrt{M}\sigma_{k,k}\mathcal{Z}_{k+1,k}\mathcal{Z}_{k,k}.
\end{align*}

Likewise, we apply the estimates in Lemma \ref{lemDsptHkE} to control $(\Rom{2}_b)$.
\begin{align}
&(\Rom{2}_b)=\sigma_{k}t^{3k-1}(T_{mix}^A +T_{mix}^B)\notag\\
&\geq \sigma_{k}t^{3k-1} \bigg\{
-2||\nabla_x^{k-1}\nabla_v^2 h|| \cdot ||\nabla^k_x\nabla_v h|| - ||\nabla_x^{k-1}\nabla_v h|| \cdot ||\nabla^k_xh|| \notag\\
&+||\nabla^k_xh||^2  -\sum\limits_{l=1}^{k-1}\sum\limits_{l_1=0}^{k-l-1}
\begin{pmatrix}
k-1-l \\
l_1
\end{pmatrix}
\sqrt{M}\big(||\nabla_x^{k-l_1-2}\nabla^2_v h||+ ||\nabla_x^{k-l_1-1}\nabla^2_v h||\big)
|| \nabla^k_x h|| \notag \\
& -\sum\limits_{l=1}^{k} \sum\limits_{l_1=0}^{k-l}\begin{pmatrix}
k-l \\
l_1
\end{pmatrix}
||\nabla^{k-1}_x\nabla_v h||\cdot\sqrt{M}
\big(||\nabla_v\nabla_x^{k-l_1-1} h||+ ||\nabla_v\nabla_x^{k-l_1} h||\big)
\bigg\}\notag\\
&\geq \frac{\sigma_{k}}{t} \bigg\{
-2t^{\frac{3k-1}{2}}||\nabla_x^{k-1}\nabla_v^2 h|| \cdot t^{\frac{3k+1}{2}}||\nabla^k_x\nabla_v h|| - t^{\frac{3k-2}{2}}||\nabla_x^{k-1}\nabla_v h|| \cdot t^{\frac{3k}{2}}||\nabla^k_xh||  \notag\\
&\quad +t^{3k}||\nabla^k_xh||^2 \notag\\ &-\sum\limits_{l=1}^{k-1}\sum\limits_{l_1=0}^{k-l-1}
\begin{pmatrix}
k-1-l \\
l_1
\end{pmatrix}
\sqrt{M}\big(t^{\frac{3k-3l_1-4}{2}}||\nabla_x^{k-l_1-2}\nabla^2_v h||+ t^{\frac{3k-3l_1-1}{2}}||\nabla_x^{k-l_1-1}\nabla^2_v h||\big)
t^{\frac{3k}{2}}|| \nabla^k_x h|| \notag \\
& -\sum\limits_{l=1}^{k} \sum\limits_{l_1=0}^{k-l}\begin{pmatrix}
k-l \\
l_1
\end{pmatrix}
t^{\frac{3k-2}{2}}||\nabla^{k-1}_x\nabla_v h||\cdot\sqrt{M}
\big(t^{\frac{3k-3l_1-2}{2}}||\nabla_v\nabla_x^{k-l_1-1} h||+t^{\frac{3k-3l_1+1}{2}} ||\nabla_v\nabla_x^{k-l_1} h||\big)
\bigg\}\notag\\
&= \frac{\sigma_{k}}{t} \bigg\{
-2\mathcal{Z}_{k+1,k-1} \mathcal{Z}_{k+1,k} - \mathcal{Z}_{k,k-1} \mathcal{Z}_{k,k}   +\mathcal{Z}_{k,k}^2 \notag\\
&\quad -\sum\limits_{l=1}^{k-1}\sum\limits_{l_1=0}^{k-l-1}
\begin{pmatrix}
k-1-l \\
l_1
\end{pmatrix}
\sqrt{M}\big(\mathcal{Z}_{k-l_1,k-l_1-2}+ \mathcal{Z}_{k-l_1+1,k-l_1-1}\big)
\mathcal{Z}_{k,k} \notag \\
&\quad -\sum\limits_{l=1}^{k} \sum\limits_{l_1=0}^{k-l}\begin{pmatrix}
k-l \\
l_1
\end{pmatrix}
\sqrt{M}\mathcal{Z}_{k,k-1}
\big(\mathcal{Z}_{k-l_1,k-l_1-1}+\mathcal{Z}_{k-l_1+1,k-l_1}\big)
\bigg\}
\end{align}
As above, we observe that
\begin{align*}
\big(\mathcal{Z}_{k-l_1,k-l_1-2}+ \mathcal{Z}_{k-l_1+1,k-l_1-1}\big)
\mathcal{Z}_{k,k}
\leq
\left\{\begin{array}{ll}
\big(\mathcal{Z}+ \mathcal{Z}_{k+1,k-1}\big)
\mathcal{Z}_{k,k}, &\mbox{ if } l_1=0; \\
 2\mathcal{Z}\mathcal{Z}_{k,k} , &\mbox{ if } l_1>0;
\end{array}
\right.
\end{align*}
\begin{align*}
\mathcal{Z}_{k,k-1}
\big(\mathcal{Z}_{k-l_1,k-l_1-1}+\mathcal{Z}_{k-l_1+1,k-l_1}\big)
\leq
\mathcal{Z}\big(2\mathcal{Z}+ \mathcal{Z}_{k+1,k}\big),
\end{align*}
and then we conclude that
\begin{align} \label{EstHe2b}
(\Rom{2}_b)
&\geq \frac{\sigma_k}{t}\bigg\{-2\mathcal{Z}_{k+1,k-1}\mathcal{Z}_{k+1,k} - \mathcal{Z}\mathcal{Z}_{k,k} + \mathcal{Z}_{k,k}^2-2^{k}\sqrt{M}\mathcal{Z}\mathcal{Z}_{k,k}\notag\\
&\quad  - (k-1)\sqrt{M}\mathcal{Z}_{k+1,k-1}\mathcal{Z}_{k,k}-  2^{k}\sqrt{M}\mathcal{Z} (2\mathcal{Z}+\mathcal{Z}_{k+1,k})\bigg\}.
\end{align}
Combined with the inequalities \eqref{EstHe1} and \eqref{EstHe2a}, we deduce
\begin{align}
-\frac{t}{2}\frac{\dd}{\dd t}\mathcal{F}_{k}(t,h_t)
&
\geq
 -\frac{1}{2} \bigg(3k\sigma_{k,k}\mathcal{Z}_{k,k}^2 + \sum\limits_{i=0}^{k-1} (k+2i)\sigma_{k,i} \mathcal{Z}^2 + 2(3k-1)\sigma_{k} \mathcal{Z}\mathcal{Z}_{k,k}\bigg) \notag\\
&
\quad +  \bigg\{\sum\limits_{i=0}^{k}
\sigma_{k,i}\mathcal{Z}_{k+1,i}^2 -  \sum\limits_{i=0}^{k-1}\sigma_{k,i}\left[k\mathcal{Z}^2 + \mathcal{Z}\mathcal{Z}_{k,k} +2^i\sqrt{M}(2\mathcal{Z}^2 + \mathcal{Z}\mathcal{Z}_{k+1,i}) \right] \notag\\
&
\quad\quad -2^{k+1}\sqrt{M}\sigma_{k,k}\mathcal{Z}\mathcal{Z}_{k,k} - k\sqrt{M}\sigma_{k,k}\mathcal{Z}_{k+1,k}\mathcal{Z}_{k,k}\bigg\} \notag\\
&
\quad + \sigma_k\bigg\{-2\mathcal{Z}_{k+1,k-1}\mathcal{Z}_{k+1,k} - \mathcal{Z}\mathcal{Z}_{k,k} + \mathcal{Z}_{k,k}^2-2^{k}\sqrt{M}\mathcal{Z}\mathcal{Z}_{k,k}\notag\\
&\quad\quad  - (k-1)\sqrt{M}\mathcal{Z}_{k+1,k-1}\mathcal{Z}_{k,k}
-  2^{k}\sqrt{M}\mathcal{Z} (2\mathcal{Z}+\mathcal{Z}_{k+1,k})\bigg\}
\end{align}

{\bf Step 3.} Now we divide the terms in the preceding lower bound into three classes.
\begin{itemize}
	\item The ones  depending only on $\mathcal{Z}_{k,k}$ and $\mathcal{Z}_{k+1,i}$ ($1\leq i\leq k$) are collected in $\mathcal{Q}$, c.f. Step~$1$ \eqref{EstHeQ}. And we have
	\[
	\mathcal{Q}
	\geq \frac14\left(\sigma_k \mathcal{Z}_{k,k}^2 + \sum\limits_{i=0}^{k}
	\sigma_{k,i}\mathcal{Z}_{k+1,i}^2\right)
	\geq \frac{1}{64k^2M}\sigma_k \left(\mathcal{Z}_{k,k}^2 + \sum\limits_{i=0}^{k}
	\mathcal{Z}_{k+1,i}^2\right).
	\]
	\item The ones depending only on $\mathcal{Z}$ are collected below,
	\[
	-\frac{1}{2} \sum\limits_{i=0}^{k-1} (k+2i)\sigma_{k,i} \mathcal{Z}^2
	- \sum\limits_{i=0}^{k-1}\sigma_{k,i}\left[k\mathcal{Z}^2+ 2^{i+1}\sqrt{M}\mathcal{Z}^2 \right] - 2^{k+1}\sigma_k\sqrt{M}\mathcal{Z}^2
	\]
	and it can be further bounded from below by
	\[
	-\left[(2k^2+ 2^{k+1}\sqrt{M})\cdot 64k^2M+ 2^{k+1}\sqrt{M}\right]\sigma_k \mathcal{Z}^2 := -K_0 \sigma_k \mathcal{Z}^2.
	\]
	(Note here $K_0$ depends only on $k$ and $M$.)
	\item The other terms can be bounded from below by
	\[
	-K_1 \sigma_k \mathcal{Z} (\mathcal{Z}_{k,k} + \sum_{i=0}^{k}\mathcal{Z}_{k+1,i})
	\]
	for some positive constant $K_1$ depending only on $k$ and $M$.
\end{itemize}
Putting all together, we have
\[
-\frac{t}{2}\frac{\dd}{\dd t}\mathcal{F}_{k}(t,h_t)
\geq
\sigma_k\bigg\{\frac{1}{64k^2M} \left(\mathcal{Z}_{k,k}^2 + \sum\limits_{i=0}^{k}
\mathcal{Z}_{k+1,i}^2\right)
-K_0 \mathcal{Z}^2
-K_1 \mathcal{Z} \left(\mathcal{Z}_{k,k} + \sum_{i=0}^{k}\mathcal{Z}_{k+1,i}\right)\bigg\}.
\]
Recall by the induction hypothesis, we have
\[
-\frac{\dd}{\dd t}\sum_{l=0}^{k-1}\mathcal{F}_{l}(t,h_t)
\geq \frac{\Lambda_{k-1}}{t} \mathcal{Z}^2.
\]
So it suffices to choose $\sigma_k>0$ small enough to conclude
\[
-\frac{\dd}{\dd t}\sum_{l=0}^{k}\mathcal{F}_{l}(t,h_t)
\geq \frac{\Lambda_{k}}{t} \left(\mathcal{Z}^2 +\mathcal{Z}_{k,k}^2 + \sum\limits_{i=0}^{k}
\mathcal{Z}_{k+1,i}^2\right).
\]
Moreover, it is easy to choose $\sigma_k$ and hence $\Lambda_k$  depending only on $\Lambda_{k-1}, k $ and $M$. For instance, one can choose
\[
\sigma_k = \min\left\{ \frac{\Lambda_{k-1}}{4K_0}, \frac{\Lambda_{k-1}}{128(k+2)k^2MK_1^2} \right\},
\quad\quad \Lambda_k = \min\left\{ \frac{\Lambda_{k-1}}{2}, \frac{\sigma_k}{128k^2M}\right\}.
\]
By the induction hypothesis, $\Lambda_{k-1}$ depends only on $k-1$ and $M$. Therefore the proof is completed.

\end{proof}

\section{An application to the mean field interaction}\label{SectApp1}

In this section we shall focus on the Curie-Weiss model and prove Proposition \ref{PropCW}. Recall the potential $V$ is given by \eqref{CW-V} and \eqref{CW-uw}, i.e.
\begin{equation}
V(x_1,x_2,\cdots,x_N)= \sum_{i} \beta(\frac{x_i^4}{4}-\frac{x_i^2}{2}) - \frac{1}{2N}\sum_{j\neq i}\beta K x_ix_j
\end{equation}
where $x_i\in \RR$ for each $i$, $\beta>0$ is the inverse temperature, and the model is ferromagnetic or antiferromagnetic according to $K > 0$ or $K < 0$.

\subsection{On the Assumption \ref{HypBHess}} This subsection is devoted to prove
\begin{lem}\label{CWLemA2}
	For the potential $V$ defined by \eqref{CW-V} and \eqref{CW-uw}, Assumption 2 holds with
	\[
	M=2020(\beta^{2/3}+ \beta^2 + K^4\beta^2).
	\]
	In particular, $M$ is independent of the number $N$ of particles.
\end{lem}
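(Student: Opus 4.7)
The plan is to verify the inequality \eqref{IneqVk} case by case in $l$. Since the Curie-Weiss potential is polynomial of degree four in each variable and only quadratic in the mean-field interaction, $\nabla_x^l V\equiv 0$ for $l\geq 5$, so only $l\in\{2,3,4\}$ demand work. Direct differentiation yields
\begin{align*}
(\nabla_x^2 V)_{ab}&=\beta(3x_a^2-1)\delta_{ab}-\tfrac{\beta K}{N}(1-\delta_{ab}),\\
(\nabla_x^3 V)_{abc}&=6\beta x_a\delta_{ab}\delta_{ac},\qquad (\nabla_x^4 V)_{abcd}=6\beta\delta_{ab}\delta_{ac}\delta_{ad},
\end{align*}
so in particular the coupling constant $K$ appears only in $\nabla_x^2 V$, while $\nabla_x^3 V$ and $\nabla_x^4 V$ are purely ``diagonal''.

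The case $l=4$ is immediate from $|\nabla_x^4 V\cdot\nabla_v g|^2=36\beta^2|\nabla_v g|^2$. For $l=2$ I will split $(\nabla_x^2 V\cdot\nabla_v g)_a=\beta(3x_a^2-1)\partial_{v_a}g-\tfrac{\beta K}{N}\sum_{j\neq a}\partial_{v_j}g$ and apply $(p-q)^2\leq 2p^2+2q^2$, handling the mean-field cross-term $N$-uniformly by the purely combinatorial estimate $\sum_a\bigl|\tfrac{1}{N}\sum_{j\neq a}y_j\bigr|^2\leq\sum_j y_j^2$ with $y_j=\partial_{v_j}g$; this produces a contribution of order $\beta^2K^2\int|\nabla_v g|^2\,\dd\mu$. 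Using $(3x_b^2-1)^2\leq 18x_b^4+2$ for the diagonal piece of $l=2$ and $|\nabla_x^3 V\cdot\nabla_v g|^2=36\beta^2\sum_b x_b^2(\partial_{v_b}g)^2$ for $l=3$ reduces the whole problem to two weighted moment bounds
\[
\sum_b\int x_b^{2m}(\partial_{v_b}g)^2\,\dd\mu\leq C\int|\nabla_v g|^2\,\dd\mu+C'\int|\nabla^2_{xv}g|^2\,\dd\mu,\qquad m\in\{1,2\},
\]
with $C,C'$ to be computed and, crucially, independent of $N$.

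To establish these bounds I will use a Lyapunov-function argument applied particle by particle. For each $b$ set $W_b(x)=e^{\eta x_b^2}$ and recall the general inequality $\int(-LW_b/W_b)\,h^2\,\dd\nu\leq\int|\nabla_x h|^2\,\dd\nu$, valid for any smooth positive $W$ (proved by integration by parts combined with Cauchy-Schwarz). A direct computation gives
\[
-\frac{LW_b}{W_b}=2\eta\beta x_b^4-(4\eta^2+2\eta\beta)x_b^2-2\eta-\frac{2\eta\beta K}{N}x_b\sum_{j\neq b}x_j.
\]
Taking $\eta=\beta/8$ makes the $x_b^4$ coefficient strictly dominate. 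Absorbing the mean-field drift via $|2\eta\beta K x_b\bar x_{-b}|\leq\eta\beta(x_b^2+K^2\bar x_{-b}^2)$, absorbing the remaining $x_b^2$ correction via $x_b^2\leq\tfrac12(1+x_b^4)$, and taking $h=\partial_{v_b}g$ produces a per-index inequality
\[
\int x_b^4(\partial_{v_b}g)^2\,\dd\nu\leq c_1\int(\partial_{v_b}g)^2\,\dd\nu+\tfrac{c_2}{\beta^2}\int|\nabla_x\partial_{v_b}g|^2\,\dd\nu+c_3 K^2\int\bar x_{-b}^2(\partial_{v_b}g)^2\,\dd\nu.
\]
The $m=1$ bound follows by AM-GM interpolation $x_b^2\leq\tfrac{\alpha}{2}x_b^4+\tfrac{1}{2\alpha}$ and optimizing $\alpha$ against $\beta$; this optimization gives $\beta^{2/3}$ in the small-$\beta$ regime and $\beta^2$ in the large-$\beta$ regime, which accounts for the first two summands of $M$.

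The principal obstacle will be closing the self-consistent loop created by the residual mean-field cross-term. Summing the per-index bound in $b$ and using $\sum_b\bar x_{-b}^2\leq\tfrac1N\sum_b\sum_{j\neq b}x_j^2$ reduces the question to controlling $R:=\tfrac{1}{N}\int\bigl(\sum_j x_j^2\bigr)|\nabla_v g|^2\,\dd\mu$. A second application of the same moment estimate, now with roles swapped (using $W_j$ and test function $|\nabla_v g|^2$ in place of $(\partial_{v_b}g)^2$), bounds $R$ by a fixed multiple of $\int|\nabla_v g|^2\,\dd\mu$, $\tfrac{1}{N}\int|\nabla^2_{xv}g|^2\,\dd\mu$, and $K^2R$ itself; provided the coefficient of $R$ is less than one this closes the loop and yields an $N$-independent bound whose $K$-dependence is $K^4$, precisely because of the two nested mean-field expansions. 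Rolling all numerical factors into a single universal constant then gives the stated $M=2020(\beta^{2/3}+\beta^2+K^4\beta^2)$.
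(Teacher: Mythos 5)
Your overall reduction to weighted moment bounds, and the use of a Lyapunov-type weight $W_b=e^{\eta x_b^2}$ per coordinate, is close in spirit to the paper's argument (which uses $\Phi=\beta x_i$, i.e.\ $\Phi=W'/W$ up to a constant, in an integration-by-parts identity). The real divergence, and the genuine gap, is in how you treat the residual mean-field cross term $K^2\int\bar x_{-b}^2(\partial_{v_b}g)^2\,\dd\nu$.

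The paper does \emph{not} set up a self-consistent loop. It sums, in a single pass, the weighted Poincar\'e inequality for the ``main'' index $i$ (with $\Phi=\beta x_i$) together with the inequalities for the remaining indices $j\neq i$, each \emph{down-weighted} by the factor $\tfrac1N$ (i.e.\ $\Phi=\tfrac{\beta}{N}x_j$). The $\tfrac{\beta}{N}x_j^4$ terms thus produced then dominate the $\tfrac{\beta K^2}{N}x_j^2$ cross terms directly, by completing the square: $x_j^4-ax_j^2\geq -a^2/4$ with $a=O(K^2)$, which is exactly what produces the $K^4$ in $M$. This works uniformly in $K$ and $N$ with no contraction argument.

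Your sketch instead fixes the Cauchy--Schwarz split at $|2\eta\beta Kx_b\bar x_{-b}|\leq\eta\beta(x_b^2+K^2\bar x_{-b}^2)$, which after normalizing by the leading $x_b^4$ coefficient leaves a cross term whose prefactor is essentially $K^2$ (not a small tunable constant). Feeding $R:=\tfrac1N\int(\sum_jx_j^2)|\nabla_vg|^2\dd\mu$ back through the same estimate reproduces a term $\sim K^2 R$ on the right, so the loop ``closes'' only under a contraction condition $K^2<1$ (up to a universal constant). In particular, your claim that ``two nested mean-field expansions'' account for the $K^4$ is not accurate: nesting the same estimate yields a geometric series $\sum_n K^{2n}$, which diverges for $K$ large, rather than a clean $K^4$. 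To make your route work for all $K$ you would need to reintroduce a free Cauchy--Schwarz parameter, take it of size $\sim K^2$, track the $K^4$ blow-up in the zeroth-order constant, and only then close the loop; as written, the step fails for $K^2$ of order one or larger, whereas the lemma asserts an $N$-independent $M$ for every $K$. The paper's asymmetric-weight, one-pass argument avoids the loop entirely and is the cleaner way to reach the stated bound.
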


By direct computation, we have
\begin{align*}
\partial_{x_i} V
&= \beta\Big(x_i^3 - x_i - \frac1N \sum_{j:j\neq i} Kx_j \Big),\\
\partial_{x_i}\partial_{x_j} V
& = \left\{\begin{array}{ll}
		\beta(3x_i^2 -1), & \mbox{ if } i= j;\\
		-\frac{\beta K}{N}, & \mbox{ if } i\neq j,
	\end{array}\right.\\
\partial^3_{x_i}V& = 6\beta x_i,\\
\partial^4_{x_i}V &= 6\beta
\end{align*}
and all the other partial derivatives of $V$ vanish identically. Therefore the inequalities \eqref{IneqVk} in Assumption \ref{HypBHess} can be reduced to the three inequalities below: for any $g\in H^2(\mu)$,
\begin{align}
&\label{CWine1}
\beta^2 \int \sum_i |(3x_i^2-1)\partial_{v_i}g - \frac{K}{N}\sum_{j:j\neq i} \partial_{v_j}g|^2 \dd\mu \leq M \bigg(\int |\nabla_vg|^2 \dd\mu + \int |\nabla^2_{xv}g|^2 \dd\mu\bigg),\\
&\label{CWine2}
36 \beta^2 \int \sum_i x_i^2 |\partial_{v_i}g|^2 \dd\mu \leq M \bigg(\int |\nabla_vg|^2 \dd\mu + \int |\nabla^2_{xv}g|^2 \dd\mu\bigg),\\
& 36 \beta^2\int |\nabla_v g|^2\dd\mu \leq M \bigg(\int |\nabla_vg|^2 \dd\mu + \int |\nabla^2_{xv}g|^2 \dd\mu\bigg). \notag
\end{align}
The third inequality holds trivially for
\begin{equation}\label{CW-V4M}
M\geq 36\beta^2.
\end{equation}
The first and the second one are weighted Poincar\'e inequalities which can be proved by applying the Lyapunov function method. It is well-known that  the constants provided by the Lyapunov function method are usually poor in dimension dependence. However, as shown in our previous work \cite{GLWZ}, it is possible to prove the inequalities in Assumption \ref{HypBHess} with constants independent of the number $N$ of particles. Here we show that this idea works for the Curie-Weiss model as well. 

We need the following weighted Poincar\'e inequalities.

\begin{lem}\label{CWLem24}
There exist constants $M_4',M_4'', M_2', M_2''$ such that
\begin{align}
&\label{CWLemI1}
\int x_i^4 g^2 \dd\mu \leq  M_4'\int g^2 \dd\mu + M_4''\int |\nabla_{x}g|^2 \dd\mu,\\
&\label{CWLemI2}
 \int x_i^2 g^2 \dd\mu \leq  M_2'\int g^2 \dd\mu + M_2''\int |\nabla_{x}g|^2 \dd\mu
\end{align}
for all $g\in H^1(\mu)$. Moreover, the constants $M_4',M_4'', M_2', M_2''$ are explicitly computable and independent of $N$,
\begin{align*}
&M'_4=  2\Big(\frac32+\frac{1}{C}+ 2K^2\Big)^2 +\frac{4}{\beta}, \quad M''_4= \frac{2C}{\beta^2},\\
&M'_2=  \Big(\frac32+\frac{1}{C}+ 2K^2\Big)^2 +\frac{2}{\beta}+ \frac12, \quad M''_2= \frac{C}{\beta^2}+ \frac12
\end{align*}
for any parameter $C>0$.
\end{lem}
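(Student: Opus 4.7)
The weights $x_i^2$ and $x_i^4$ depend only on $x$, so writing $\dd\mu=\dd\nu\otimes\dd\gamma$ with $\gamma$ the standard Gaussian on the velocity variable, the inequalities reduce immediately to their position-space counterparts with $\dd\nu\propto e^{-V}\dd x$: first prove them for $\phi\in H^1(\nu)$, then apply at each fixed $v$ to $\phi(\cdot)=g(\cdot,v)$ and integrate against $\gamma$. So I work on $\RR^N$ with $\nu$ only.

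The starting point is the algebraic identity $\beta x_i^4=\beta x_i^2+\beta K x_i\bar x_i+x_i\partial_{x_i}V$, where $\bar x_i:=\tfrac{1}{N}\sum_{j\neq i}x_j$, obtained by multiplying $\partial_{x_i}V=\beta(x_i^3-x_i-K\bar x_i)$ by $x_i$. Integrating against $\phi^2\dd\nu$ and applying the basic integration-by-parts rule $\int f\,\partial_{x_i}V\dd\nu=\int\partial_{x_i}f\dd\nu$ with $f=x_i\phi^2$ yields
\[
\int x_i^4\phi^2\dd\nu=\int x_i^2\phi^2\dd\nu+K\int x_i\bar x_i\phi^2\dd\nu+\tfrac{1}{\beta}\int\phi^2\dd\nu+\tfrac{2}{\beta}\int x_i\phi\,\partial_{x_i}\phi\dd\nu.
\]
I estimate the two cross terms by Young's inequality. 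For the gradient term I take the Young parameter $\varepsilon=C/\beta^2$, which gives $|\tfrac{2}{\beta}x_i\phi\,\partial_{x_i}\phi|\leq\tfrac{1}{C}x_i^2\phi^2+\tfrac{C}{\beta^2}(\partial_{x_i}\phi)^2$ (producing the $C/\beta^2$ in $M_4''$ and the $1/C$ summand in $M_4'$); for the mean-field term I split $|Kx_i\bar x_i|\leq 2K^2x_i^2+\tfrac{1}{8}\bar x_i^2$, producing the $2K^2$ summand. Combined with the elementary bound $x_i^2\leq\tfrac{1}{2}+\tfrac{1}{2}x_i^4$, which lets me absorb $\int x_i^2\phi^2$ into $\int x_i^4\phi^2$ and conversely, this yields bounds of the shape stated in the lemma, except for the residual term $\int\bar x_i^2\phi^2\dd\nu$.

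This residual, where the mean-field coupling enters, is the main obstacle. By Cauchy-Schwarz, $\bar x_i^2\leq\tfrac{1}{N}\sum_{j\neq i}x_j^2$, hence $\int\bar x_i^2\phi^2\dd\nu\leq\tfrac{1}{N}\sum_{j\neq i}\int x_j^2\phi^2\dd\nu$. The function $\phi$ is not permutation-symmetric, so the various $\int x_j^2\phi^2$ cannot be identified by symmetry; the resolution is a bootstrap, feeding the very $x_j^2$-bound one is trying to prove back in for each $j\neq i$. This sets up a self-consistency condition on $(M_2',M_2'')$ which the explicit values stated in the lemma are designed to satisfy, and the averaging factor $\tfrac{1}{N}$ exactly cancels the $N-1$ summands, leaving constants independent of $N$. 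The specific quadratic form $(3/2+1/C+2K^2)^2$ appearing in $M_2'$ arises from applying the weighted Cauchy-Schwarz inequality $(a_1+a_2+a_3)^2\leq(p_1+p_2+p_3)\sum_k a_k^2/p_k$ with weights $p_1=3/2$, $p_2=1/C$, $p_3=2K^2$ to the pointwise control
\[
|x_i|\leq\sqrt{3/2}+2\beta^{-1}|\partial_{x_i}V|+2|K||\bar x_i|,
\]
which itself follows from the identity $|x_i|\,|x_i^2-1|=|\partial_{x_i}V/\beta+K\bar x_i|$ combined with the observation that $|x_i^2-1|\geq 1/2$ whenever $x_i^2\geq 3/2$ (and $|x_i|\leq\sqrt{3/2}$ otherwise).
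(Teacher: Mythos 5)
Your high-level plan (reduce to the marginal measure $\nu$, exploit the algebraic identity $\beta x_i^4 = \beta x_i^2 + \beta K x_i\bar x_i + x_i\partial_{x_i}V$, integrate by parts, control the coupling via the $1/N$ averaging) is close in spirit to the paper's Lyapunov-function argument. However, the specific execution has a genuine gap at the absorption step.

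After your two Young estimates you arrive at
\[
\int x_i^4\phi^2\dd\nu \;\leq\; \Big(1 + 2K^2 + \tfrac{1}{C}\Big)\int x_i^2\phi^2\dd\nu + \tfrac18\int\bar x_i^2\phi^2\dd\nu + \tfrac{1}{\beta}\int\phi^2\dd\nu + \tfrac{C}{\beta^2}\int(\partial_{x_i}\phi)^2\dd\nu.
\]
You then invoke the elementary bound $x_i^2\leq \tfrac12 + \tfrac12 x_i^4$ to "absorb $\int x_i^2\phi^2$ into $\int x_i^4\phi^2$." But this replaces $\int x_i^2\phi^2$ by $\tfrac12\int\phi^2 + \tfrac12\int x_i^4\phi^2$, leaving a coefficient $\tfrac12(1+2K^2+1/C)$ in front of $\int x_i^4\phi^2$ on the right; absorption requires this to be strictly less than $1$, which fails for all $K$ with $2K^2\geq 1$. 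The absorption you need is the parameter-optimized Young inequality $\kappa\,x^2 \leq \tfrac12 x^4 + \tfrac12\kappa^2$ (equivalently, $x^4 - \kappa x^2\geq \tfrac12(x^4-\kappa^2)$), which always eats half of $\int x_i^4\phi^2$ at the price of a constant $\tfrac12\kappa^2$. That is precisely where the squared quantity $(3/2+1/C+2K^2)^2$ in $M_4'$ and $M_2'$ comes from in the paper's proof, via completion of the square in the combined integrand $\Psi$.

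Your alternative explanation of the squared constant — the weighted Cauchy--Schwarz applied to the pointwise bound $|x_i|\leq\sqrt{3/2}+2\beta^{-1}|\partial_{x_i}V|+2|K||\bar x_i|$ — does not close either, because it produces $\int|\partial_{x_i}V|^2\phi^2\dd\nu$, and estimating that term by integration by parts reintroduces $\int x_i^2\phi^2\dd\nu$ with a coefficient proportional to $C/\beta$, so the self-consistency constraint cannot be satisfied with the stated constants for all $\beta, K$ and the chosen $C$.

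Finally, the paper avoids the "feed the conclusion back in" bootstrap entirely: it writes one Lyapunov inequality with $\Phi=\beta x_i$, adds the $(N-1)$ inequalities obtained with $\Phi=\tfrac{\beta}{N}x_j$ for $j\neq i$, and estimates the combined integrand $\Psi$ pointwise. The positive $\tfrac{\beta^2}{N}x_j^4$ terms from the auxiliary inequalities directly dominate the $x_j^2$ contributions coming from the cross term $x_ix_j$, and the $1/N$ prefactor is what makes the resulting constants $N$-free. Your bootstrap idea can in principle be repaired (one does have $\tfrac{N-1}{N}<1$), but it needs an a priori finiteness argument and, more importantly, it is a detour compared to the paper's direct summation; and your explicit constants do not match without the completion-of-squares step above. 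The essential missing ingredient in your write-up is the inequality $\kappa x^2\leq \tfrac12\kappa^2 + \tfrac12 x^4$ applied with $\kappa$ equal to the full coefficient in front of $x_i^2$, not $\kappa=1$.
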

\begin{rmq}
	(a). Compared to the results in  \cite{GLWZ}, $\nabla W$ herein is not uniformly bounded and we do not attempt to prove a uniform log-Sobolev inequality for the mean field measure (which would require extra conditions on $\beta$ and $K$). Instead, we get rid of the dependence on $N$ by choosing some suitable coefficients in the application of the Lyapunov function method. Besides, our method also applies to a class of general potentials of mean-field type.\\
	(b). By the method in our proof, a weighted Poincar\'e inequality with the weight $x_i^6$ can be proved with constants independent of the number of particles as well. 
\end{rmq}
\begin{proof}First note that the second inequality follows from the first one since  $x_i^2\leq (x_i^4 + 1)/2$. So it suffices to verify the existence of $M'_4,M_4''$ (being  explicit and being independent of $N$). By density argument, we may assume that $g\in C_c^{\infty}$.
	
\textbf{Claim:}
\textit{Given $C>0$. For all $g\in C_c^{\infty}$, it holds that
\begin{align}\label{CWLemIn}
\int \Big[\beta^2 \Big(x_i^4-x_i^2 -\frac{K}{N}\sum_{j:j\neq i}x_ix_j\Big) - \beta - \frac{\beta^2 x_i^2}{C}\Big]g^2\dd\mu \leq C \int |\partial_{x_i}g|^2\dd\mu .
\end{align}
}

In fact, this is a standard result by the Lyapunov function method. Here we apply a slightly different argument than the one in \cite{GLWZ}. By the identity $\partial_{x_i}V e^{-V}= -\partial_{x_i}(e^{-V})$ and an integration by parts, it holds
\begin{align*}
\int \Phi \partial_{x_i}V g^2 \dd\mu
&= \int \partial_{x_i}(\Phi g^2)\dd\mu = \int \partial_{x_i}\Phi g^2 \dd\mu + 2\int \Phi g\partial_{x_i}g \dd\mu\\
&\leq  \int \partial_{x_i}\Phi g^2 \dd\mu + C\int |\partial_{x_i}g|^2\dd\mu + \frac{1}{C}\int g^2\Phi^2\dd\mu,
\end{align*}
or
\[
\int \bigg(\Phi \partial_{x_i}V- \partial_{x_i}\Phi- \frac{\Phi^2}{C}\bigg)g^2 \dd\mu
\leq C\int |\partial_{x_i}g|^2\dd\mu.
\]
The claim then follows by putting $\Phi= \beta x_i$.

Now we apply the claim to prove the lemma. By replacing $\beta$ by $\frac{\beta}{N}$ and $i$ by $j$ in the inequality \eqref{CWLemIn}, we get for all $j\neq i$,
\begin{align}\label{CWLemIn'}
\int \Big[\frac{\beta^2}{N} \Big(x_j^4-x_j^2 -\frac{K}{N}\sum_{k:k\neq j}x_jx_k\Big) - \frac{\beta}{N} - \frac{\beta^2 x_j^2}{CN^2}\Big]g^2\dd\mu \leq C \int |\partial_{x_j}g|^2\dd\mu.
\end{align}
Combined with inequality \eqref{CWLemIn}, these inequalities imply that
\begin{align*}
C \int |\nabla_{x}g|^2\dd\mu
&\geq\int \Big[\beta^2 \Big(x_i^4-x_i^2 -\frac{K}{N}\sum_{j:j\neq i}x_ix_j\Big) - \beta - \frac{\beta^2 x_i^2}{C}\Big]g^2\dd\mu\\
&\quad\quad +\sum_{j:j\neq i} \int \Big[\frac{\beta^2}{N} \Big(x_j^4-x_j^2 -\frac{K}{N}\sum_{k:k\neq j}x_jx_k\Big) - \frac{\beta}{N} - \frac{\beta^2 x_j^2}{CN^2}\Big]g^2\dd\mu\\
&:= \int \beta \Psi g^2\dd\mu
\end{align*}
where the expression $\Psi$ is given by
\[
\Psi=\beta \Big(x_i^4-x_i^2 -\frac{K}{N}\sum_{j:j\neq i}x_ix_j\Big) -1- \frac{\beta x_i^2}{C}
+\sum_{j:j\neq i}  \Big[\frac{\beta}{N} \Big(x_j^4-x_j^2 -\frac{K}{N}\sum_{k:k\neq j}x_jx_k\Big) - \frac{1}{N} - \frac{\beta x_j^2}{CN^2}\Big].
\]
The positive terms involving $x_j^4$ helps to control the crossed term involving $x_ix_j$, while the prefactor $\frac1N$ introduced in \eqref{CWLemIn'}  is crucial in the above expression to obtain dimensionless constants in the lemma. 

Then the first inequality in the lemma follows as soon as we prove
\[
\Psi\geq \frac{\beta}{2}x_i^4 - \beta \Big(\frac32+\frac{1}{C}+ 2K^2\Big)^2 -2.
\]
Indeed, using $2Kx_ix_j\leq K^2x_j^2 + x_i^2$ and $2Kx_jx_k\leq K^2x_k^2 + x_j^2$, we have
\begin{align*}
\Psi +2
&\geq
\beta \Big(x_i^4-\frac32 x_i^2 -\frac{K^2}{N}\sum_{j:j\neq i}x_j^2\Big) - \frac{\beta x_i^2}{C}
+\sum_{j:j\neq i}  \Big[\frac{\beta}{N} \Big(x_j^4-\frac32 x_j^2 -\frac{K^2}{N}\sum_{k:k\neq j}x_k^2\Big) - \frac{\beta x_j^2}{CN^2}\Big] \\
&= \beta \Big(x_i^4-\frac32 x_i^2\Big) - \frac{\beta x_i^2}{C} - \frac{(N-1)K^2\beta}{N^2} x_i^2\\
&\quad -\frac{\beta K^2}{N}\sum_{j:j\neq i}x_j^2 + \frac{\beta}{N} \sum_{j:j\neq i}  \Big(x_j^4-\frac32 x_j^2\Big) - \frac{\beta K^2}{N^2}\sum_{j:j\neq i} \sum_{k:k\neq j, k\neq i}x_k^2 - \sum_{j:j\neq i} \frac{\beta x_j^2}{CN^2}\\
&\geq \beta \Big[x_i^4-\big(\frac32+\frac{1}{C}+ K^2\big)x_i^2\Big]
+ \frac{\beta}{N}\sum_{j:j\neq i}\bigg[   x_j^4- \Big(K^2 +\frac{3}{2} + \frac{K^2(N-2)}{N} + \frac{1}{CN} \Big) x_j^2 \bigg]\\
&\geq \frac{\beta}{2}\Big[ x_i^4 - \big(\frac32+\frac{1}{C}+ K^2\big)^2\Big]
- \frac{\beta}{N}\sum_{j:j\neq i}\frac14\Big(K^2 +\frac{3}{2} + \frac{K^2(N-2)}{N} + \frac{1}{CN}\Big)^2\\
&\geq  \frac{\beta}{2}\Big[ x_i^4 - \big(\frac32+\frac{1}{C}+ K^2\big)^2\Big]
- \frac{\beta}{4}\Big(2K^2 +\frac{3}{2}  + \frac{1}{C}\Big)^2\\
&\geq \frac{\beta}{2}x_i^4 -\beta \big(\frac32+\frac{1}{C}+ 2K^2\big)^2.
\end{align*}
Thus we have finished the proof of Lemma \ref{CWLem24}.
\end{proof}

With Lemma \ref{CWLem24} at hand, it is a routine to prove the inequalities \eqref{CWine1} and \eqref{CWine2}. Indeed, the inequality \eqref{CWine2} follows directly  from \eqref{CWLemI2} with
\begin{equation}\label{CW-V3M}
M\geq \max\Big\{18\beta^2 \Big(M_4'+1\Big), 18\beta^2 \Big(M_4''+1\Big)\Big\}.
\end{equation}
For the inequality \eqref{CWine1}, Lemma \ref{CWLem24} yields that
\begin{align*}
\sum_i\int  x_i^4|\partial_{v_i}g|^2 \dd\mu
&\leq  M_4'\sum_i\int |\partial_{v_i}g|^2 \dd\mu + M_4''\sum_i\int |\nabla_{x}\partial_{v_i}g|^2\\
&= M_4'\int |\nabla_vg|^2 \dd\mu + M_4''\int |\nabla_{x}\nabla_vg|^2 \dd\mu
\end{align*}
and hence
\begin{align*}
&\quad \int |\nabla^2V\cdot\nabla_vg|^2\dd\mu=\beta^2 \int \sum_i \Big|(3x_i^2-1)\partial_{v_i}g - \frac{K}{N}\sum_{j:j\neq i} \partial_{v_j}g\Big|^2 \dd\mu \\
&\leq
2\beta^2 \sum_i\int  \Big[(3x_i^2-1)^2|\partial_{v_i}g|^2
+   \Big|\frac{K}{N}\sum_{j:j\neq i} \partial_{v_j}g\Big|^2 \Big] \dd\mu \\
&\leq 18\beta^2 \int \sum_i x_i^4|\partial_{v_i}g|^2 \dd\mu
+   2(1+K^2)\beta^2\int|\nabla_{v}g|^2 \dd\mu\\
&\leq \beta^2(18M_4'+2+2K^2)\int |\nabla_vg|^2 \dd\mu
+   18\beta^2M_4''\int|\nabla^2_{xv}g|^2 \dd\mu.
\end{align*}
That is, the inequality \eqref{CWine1} holds with
\begin{equation}\label{CW-V2M}
	M\geq \max\Big\{2\beta^2(9M_4'+1+K^2), 18\beta^2M_4''\Big\}
\end{equation}
where $M_4'$ and $M_4''$, given in Lemma \ref{CWLem24}, are independent of the number $N$ of particles.

As a conclusion, Assumption \ref{HypBHess} holds with any $M$ satisfying \eqref{CW-V2M}, \eqref{CW-V3M} and \eqref{CW-V4M}, namely,
\begin{align*}
M&\geq \max\{ 18\beta^2M_4' + 2\beta^2 + 2\beta^2K^2, 18\beta^2M_4'', 18\beta^2M_4' + 18\beta^2, 18\beta^2M_4''+ 18\beta^2, 36\beta^2\}.
\end{align*}
This holds for instance if we take  $C=\beta^{2/3}$ and
\[
M= 2020(\beta^{2/3}+ \beta^2 + K^4\beta^2).
\]
Therefore the proof of Lemma \ref{CWLemA2} is completed.

\subsection{Proof of Proposition \ref{PropCW}}
To verify the Assumption \ref{HypPI}, we quote the results in \cite[Section 2.2, Example 1]{GLWZ1} concerning Poincar\'e inequality for the invariant measure $\mu$. The proof is omitted. (Note that one should replace the $K$ therein by $\frac{N-1}{N}K$.)

\begin{lem}[c.f. \cite{GLWZ1}]\label{CWLemA1}
	For the potential $V$ defined by \eqref{CW-V} and $\eqref{CW-uw}$, the probability measure $\dd \nu(x)= \frac{1}{Z_1} e^{-V(x)}\dd x$ satisfies a Poincar\'e inequality with some constant $\kappa$
	\[
	\kappa \leq \left\{
	\begin{array}{ll}
	\left(\frac{\sqrt{\pi}}{\sqrt{\beta}} e^{\beta/4}+\frac{\beta K}{N}\right)^{-1}, \ &\text{ if } K<0,\\
	\left(\frac{\sqrt{\pi}}{\sqrt{\beta}} e^{\beta/4} -\beta \frac{N-1}{N}K\right)^{-1},\ &\text{ if }  K>0,
	\end{array}
	\right.
	\]
	provided that the RHS expressions are strictly positive.
\end{lem}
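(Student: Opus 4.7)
The plan is to reduce the $N$-particle Poincar\'e inequality to a one-site problem by tensorisation, then incorporate the mean-field interaction via a spectral perturbation. \emph{First,} establish a one-dimensional Poincar\'e inequality for $\dd\rho_0(x) = Z_0^{-1}e^{-U_0(x)}\dd x$ on $\RR$, $U_0(x) = \beta(x^4/4 - x^2/2)$. Since $U_0$ is even (median $=0$), I would apply the Muckenhoupt--Bobkov--G\"otze criterion
\begin{equation*}
C_P(\rho_0) \leq 4 \sup_{r>0}\rho_0\bigl([r,\infty)\bigr) \int_0^r e^{U_0(t)}\,\dd t,
\end{equation*}
and split $[0,\infty)$ into the well region $[0,1]$ (where $U_0 \leq 0$, so the inner integral is at worst linear in $r$) and the tail $[1,\infty)$ (where the quadratic comparison $U_0(x) \geq \beta(|x|-1)^2$ yields Gaussian decay $\rho_0([r,\infty)) \lesssim \sqrt{\pi/\beta}\,e^{-\beta(r-1)^2} Z_0^{-1}$). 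The barrier factor $e^{\beta/4}$ then arises from $-\min U_0 = \beta/4$, while the Gaussian-type prefactor $\sqrt{\pi/\beta}$ comes from $\int e^{-\beta y^2}\dd y$; optimising in $r$ matches the one-site constant $\frac{\sqrt{\pi}}{\sqrt{\beta}}e^{\beta/4}$ of the lemma.

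\textbf{Tensorisation and interaction.} By the tensorisation property of Poincar\'e inequalities, the product measure $\rho_0^{\otimes N}$ inherits the same constant. Decompose the full potential as $V = \sum_i U_0(x_i) + V_I$ with
\begin{equation*}
V_I(x) = -\frac{\beta K}{2N}\Bigl[\bigl(\sum_i x_i\bigr)^2 - \sum_i x_i^2\Bigr],
\end{equation*}
so that $\nabla^2 V_I = -\frac{\beta K}{N}(J-I)$ (with $J$ the all-ones $N\times N$ matrix) has only two distinct eigenvalues: $-\beta K(N-1)/N$ on the collective mode $(1,\ldots,1)$, and $\beta K/N$ on its $(N-1)$-dimensional orthogonal complement. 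A Helffer--Sj\"ostrand covariance identity, or equivalently a generalised Brascamp--Lieb representation, then produces an additive spectral-gap correction combining the product-measure gap with $\lambda_{\min}(\nabla^2 V_I)$: this contributes $+\beta K/N$ in the anti-ferromagnetic case ($K<0$, with the orthogonal modes as the bad direction) and $-\beta K(N-1)/N$ in the ferromagnetic case ($K>0$, with the collective mode as the bad direction), reproducing both stated bounds after inversion.

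\textbf{Main obstacle.} The delicate step is rigorously adding the unbounded quadratic perturbation $V_I$: the Holley--Stroock bounded-ratio method does not apply, and in the ferromagnetic case $V_I$ is \emph{concave} on the collective mode, so the interaction \emph{enlarges} the Poincar\'e constant rather than shrinking it. The cleanest route is the Helffer--Sj\"ostrand variance representation $\mathrm{Var}_\nu(f) = \langle\nabla f, (L^\nu)^{-1}\nabla f\rangle_\nu$, bounding the resolvent by combining the product-measure gap with the two-eigenvalue structure of $\nabla^2 V_I$. The resulting estimate is well-posed precisely when the parenthesised expression in the lemma is positive -- the sub-critical regime where no phase transition occurs. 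Matching the one-site constant exactly to $\frac{\sqrt{\pi}}{\sqrt{\beta}}e^{\beta/4}$ is the most tedious bookkeeping, but reduces to explicit Gaussian integrals near each well.
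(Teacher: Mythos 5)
The paper does \emph{not} prove Lemma \ref{CWLemA1}; immediately above the statement it says the result is quoted from \cite[Section~2.2, Example~1]{GLWZ1} and ``the proof is omitted.'' So there is no in-paper argument to compare against, and your proposal is being evaluated as a stand-alone blind attempt.

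As such, the high-level strategy (one-site Muckenhoupt/Bobkov--G\"otze bound, tensorisation, then absorb the quadratic mean-field term via a variance representation) is sensible, and you correctly identify the two-eigenvalue structure of $\nabla^2 V_I = -\frac{\beta K}{N}(J-I)$ as the origin of the $\frac{\beta K}{N}$ and $-\beta\frac{N-1}{N}K$ corrections. However there are two genuine gaps. First, the one-site step is asserted rather than carried out: the Muckenhoupt criterion applied to $e^{-\beta(x^4/4-x^2/2)}$ produces $B_+ \approx \sqrt{\pi/\beta}\,e^{\beta/4}$, which is an upper bound on the Poincar\'e \emph{constant} $\kappa_0$ (equivalently, a spectral gap lower bound of order $\sqrt{\beta/\pi}\,e^{-\beta/4}$). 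You would need to carefully track which of $\kappa_0$ or $\kappa_0^{-1}$ the quantity $\frac{\sqrt{\pi}}{\sqrt{\beta}}e^{\beta/4}$ in the lemma represents before declaring a ``match''; as written, the bookkeeping does not close, and the Kramers heuristic ($\kappa_0 \sim e^{+\text{barrier}}$ for a double well) makes it impossible that the one-site spectral gap itself grows like $e^{\beta/4}$. Second, and more fundamentally, the claim that a Helffer--Sj\"ostrand/Brascamp--Lieb argument yields an \emph{additive} spectral-gap correction, gap$(\nu) \geq$ gap$(\nu_0) + \lambda_{\min}(\nabla^2 V_I)$, is not a standard theorem when the base potential $V_0$ is non-convex. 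The Helffer--Sj\"ostrand operator for $\nu$ is $L^\nu \otimes I + \nabla^2 V_0 + \nabla^2 V_I$, where $L^\nu = -\Delta + \nabla(V_0+V_I)\cdot\nabla$ carries the drift of the \emph{full} potential; in particular $L^\nu \otimes I + \nabla^2 V_0$ is \emph{not} the Witten Laplacian of the product measure $\nu_0$, so one cannot simply insert the product-measure gap and then add $\lambda_{\min}(\nabla^2 V_I)$. Making this precise requires either a conjugation/completion-of-squares argument, or a conditional two-scale decomposition (Otto--Reznikoff type), neither of which you carry out. You flag this as ``the delicate step,'' which is honest, but until it is resolved the proof is incomplete.
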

\begin{proof}[Proof of Proposition \ref{PropCW}]
	By Lemma \ref{CWLemA1}, we know
	\begin{itemize}
		\item in the case $K < 0$, when $N$ is sufficiently large, $\kappa$ in Assumption \ref{HypPI} can be chosen as
		\[
		\kappa = \frac12 \frac{\sqrt{\beta}}{\sqrt{\pi}} e^{-\beta/4};
		\]
		\item in the case $K > 0$, suppose $\beta$ is small in the sense that
		\[
		\lambda_1:=\frac{\sqrt{\pi}}{\sqrt{\beta}} e^{\beta/4} -\beta K>0.
		\]
		(Note that the expression above tends to infinity as $\beta\rightarrow 0$.) Then Assumption $\ref{HypPI}$ holds with $\kappa=1/\lambda_1$.
	\end{itemize}
   Note also Assumption \ref{HypBHess} holds with $M$ given by Lemma \ref{CWLemA2} where $M$ does not depend on $N$. Therefore, in both cases, Theorems \ref{thmHigh} and Theorem \ref{thmReg} apply, and the constants in the estimates are independent of the number $N$ of particles.
\end{proof}

%\section{An application for the Vlasov-Poisson-Fokker-Planck equation}
%\label{SectApp2}

\section{Appendix A: On short time asymptotic of the fundamental solutions}
\label{SectFSol}
In this appendix we show that the exponents in the short-time regularity estimates \eqref{EqThm} of H\'erau-Villani type are optimal, with the help of the fundamental solutions of the kinetic Fokker-Planck equation in the case of quadratic potentials.

\subsection{The fundamental solution of Kolmogorov}
Recall that the fundamental solution to Kolmogorov's operator $\partial_t + v\cdot \nabla_x +\eta\cdot\nabla_v -\theta\Delta_v$ with constant vector $\eta$ and constant $\theta>0$, starting from the initial measure $\delta_{(x_0,v_0)}$, is given by
\begin{align}
&\left(\frac{\sqrt{3}}{2\pi\theta t^2}\right)^{d}
\exp\Bigg\{-\frac{1}{\theta}\bigg[\frac{3|x-x_0-v_0t - \eta t^2/2|^2}{t^3} \notag\\
& \quad\quad- \frac{3(x-x_0-v_0t - \eta t^2/2)\cdot (v-v_0- \eta t)}{t^2}
 + \frac{|v-v_0- \eta t|^2}{t}\bigg]\Bigg\}.
\end{align}
(In some sense, this formula in the simplest case of $\theta=1,x_0=v_0=\eta=0$ is all we need to test optimality in short time.) It corresponds to the diffusion process $(X_t,V_t)_{t\geq 0}$ evolving according to the SDE
\begin{align*}
\left\{
\begin{array}{l}
\dd X_t= V_t\dd t\\
\dd V_t= \eta \dd t + \sqrt{2\theta}\dd B_t
\end{array}
\right.
\end{align*}
subject to the initial condition $(X_0,V_0)=(x_0,v_0)$, where $(B_t)_{t\geq 0}$ is a standard Brownian motion. The mean and covariance matrix of $(X_t,Y_t)$ are given by
\[
\mbox{mean} (\begin{pmatrix}
	X_t\\
	V_t
\end{pmatrix})= (x_0 + v_0t+\frac{\eta t^2}{2}, v_0+\eta t)^{\mathsf{T}},
\quad
\mbox{Cov}(\begin{pmatrix}
X_t\\
V_t
\end{pmatrix})
=\begin{pmatrix}
\frac23 \theta t^3 I_{d} & \theta t^2 I_{d}\\
\theta t^2 I_{d} & 2\theta t I_{d}
\end{pmatrix}
\]
where $I_{d}$ is the identity matrix in $\RR^d$. Note that the fundamental solution can be written as
\begin{align}
&\left(\frac{\sqrt{3}}{2\pi \theta t^2}\right)^{d}
\exp\Bigg\{-\frac{1}{\theta}\bigg[\frac{3|x-x_0-(v+v_0)t/2|^2}{t^3}
+ \frac{|v-v_0- \eta t|^2}{4t}\bigg]\Bigg\}
\end{align}
which is the form given by A.N.~Kolmogorov \cite{Kol34} in 1934.

\subsection{Sharpness of Theorem \ref{thmReg}}
Let us show that the exponents in the regularity estimates \eqref{EqThm} are optimal in short time. Set the potential $V(x)=\frac{\omega_0^2}{2} |x|^2$ (with $\omega_0>0$). Then it is well-known that the fundamental solution can be written explicitly for the evolution equation \eqref{Eq-kFP}. Indeed it is standard to derive the explicit formula by Ito's stochastic calculus: the fundamental solution starting from the initial measure $\delta_{(x_0,v_0)}$ is given by the law of the $\RR^d\times\RR^d$-valued diffusion process $(X_t,V_t)_{t\geq 0}$ evolving according to
\begin{align*}
\dd
\begin{pmatrix}
X_t\\
V_t
\end{pmatrix}
=
\begin{pmatrix}
0 &1\\
-\omega_0^2 & -1
\end{pmatrix}
\begin{pmatrix}
X_t\\
V_t
\end{pmatrix}\dd t + \sqrt{2}\dd \begin{pmatrix}
0\\
B_t
\end{pmatrix}
\end{align*}
with initial datum $(X_0,V_0)=(x_0,v_0)$, where $(B_t)_{t\geq 0}$ is a standard Brownian motion on $\RR^d$. The above SDE can be solved explicitly by
\[
\begin{pmatrix}
X_t\\
V_t
\end{pmatrix}
= e^{\Xi t}
\begin{pmatrix}
x_0\\
v_0
\end{pmatrix}
+
\sqrt{2}\int_0^t e^{\Xi (t-s)}
\begin{pmatrix}
0\\
\dd B_s
\end{pmatrix}
\]
where we denote $\Xi=\begin{pmatrix}
0 &1\\
-\omega_0^2 & -1
\end{pmatrix}$. Moreover, the law of $(X_t,V_t)$ is determined by its mean value and covariance matrix. Instead of reproducing the exact (but complex) form of the fundamental solution, we present its asymptotic form from which it is easier to glean information about the asymptotic behaviour in short time. To avoid heavy expressions we only consider the case $x_0=v_0=0$. We assume furthermore that $\omega_0\neq \frac12$ (the case $\omega_0=\frac12$ is a little bit different). Then the matrix $\Xi$ has two distinct non-zero eigenvalues $\lambda_1$ and $\lambda_2$  with respective eigenvector $\xi_1$ and $\xi_2$, i.e.
\begin{align*}
\begin{pmatrix}
0 &1\\
-\omega_0^2 & -1
\end{pmatrix}
(\xi_1,\xi_2)=
(\xi_1,\xi_2)
\begin{pmatrix}
\lambda_1 & 0\\
0& \lambda_2
\end{pmatrix}.
\end{align*}
Let $\alpha_1, \alpha_2$ be two complex numbers such that
\[
\alpha_1\xi_1 + \alpha_2 \xi_2
=\begin{pmatrix}
0\\
1
\end{pmatrix}
\]
(for instance, one may set $\xi_i= (1, \lambda_i)^{\mathsf{T}}$, $\alpha_1= -\alpha_2= 1/(\lambda_1-\lambda_2)$) and so
\begin{align}
	\begin{pmatrix}
		\sum_k \lambda^n_k\alpha_k\xi_{k1}\\
		\sum_k \lambda^n_k\alpha_k\xi_{k2}
	\end{pmatrix}
	&=
	\begin{pmatrix}
		\xi_{11} & \xi_{21}  \\
		\xi_{12} &\xi_{22}
	\end{pmatrix}
	\begin{pmatrix}
		\lambda_1&0\\
		0&\lambda_2
	\end{pmatrix}^n
	\begin{pmatrix}
		\alpha_1\\
		\alpha_2
	\end{pmatrix}\notag\\
	&\label{FSolCoef1}
	= \begin{pmatrix}
		0 &1\\
		-\omega_0^2 & -1
	\end{pmatrix}^n
	(\xi_1,\xi_2)
	\begin{pmatrix}
		\alpha_1\\
		\alpha_2
	\end{pmatrix}
	= \begin{pmatrix}
		0 &1\\
		-\omega_0^2 & -1
	\end{pmatrix}^n
	\begin{pmatrix}
		0\\
		1
	\end{pmatrix}.
\end{align}
%(It is then clear that $\omega_0$ does not matter a lot in the coefficients $a_k^{i,j}$, $k=1,2,3$. Another way to compute the coefficients is by setting .)

Now we rewrite the martingale part of the diffusion process as
\[
\sqrt{2}\int_0^t e^{\Xi (t-s)}
\begin{pmatrix}
	0\\
	\dd B_s
\end{pmatrix}
=
\sqrt{2}\int_0^t \left(\alpha_1\xi_1 e^{\lambda_1(t-s)} + \alpha_2 \xi_2 e^{\lambda_2(t-s)}\right)\dd B_s,
\]
and we compute the covariance matrix by Ito's isometry. Set $\xi_{k}= (\xi_{k1}, \xi_{k2})^{\mathsf{T}}$, $k=1,2$. Then the covariance matrix is in a $2$-by-$2$ block form with the $(i,j)$-element $\gamma_{ij}(t) I_{d}$ where $1\leq i,j\leq 2$ and
\begin{align}
\gamma_{ij}(t)
&= 2\int_0^t \left(\alpha_1\xi_{1i} e^{\lambda_1(t-s)} + \alpha_2 \xi_{2i} e^{\lambda_2(t-s)}\right)
\left(\alpha_1\xi_{1j} e^{\lambda_1(t-s)} + \alpha_2 \xi_{2j} e^{\lambda_2(t-s)}\right)\dd s \notag\\
&\label{FSolCovMat}
=2 \sum_{k,l} \frac{1}{\lambda_k +\lambda_l} (e^{(\lambda_k+\lambda_l)t}-1) \alpha_k\alpha_l \xi_{ki}\xi_{lj}\\
&= 2 \sum_{k,l} \left[t + \frac{1}{2}(\lambda_k+\lambda_l)t^2 + \frac{1}{6} (\lambda_k+\lambda_l)^2t^3 + o(t^3)\right] \alpha_k\alpha_l \xi_{ki}\xi_{lj}\notag\\
&= 2a^{(i,j)}_1t + a^{(i,j)}_2t^2 + \frac{1}{3}a^{(i,j)}_3t^3 + o(t^3)\notag
\end{align}
as $t\rightarrow 0+$, where the second last line follows by a Taylor expansion and the constants $a^{(i,j)}_1, a^{(i,j)}_2, a^{(i,j)}_3$ are given by
\begin{align*}
a^{(i,j)}_1&
=\sum_{k}\alpha_k \xi_{ki} \cdot  \sum_{l}\alpha_l \xi_{lj}, \\
a^{(i,j)}_2&
=\sum_{k}\alpha_k \lambda_k\xi_{ki}\cdot \sum_{l}\alpha_l \xi_{lj}
+ \sum_{k}\alpha_k \xi_{ki}\cdot  \sum_{l}\alpha_l \lambda_l\xi_{lj},\\
a^{(i,j)}_3&=
 \sum_{k}\alpha_k \lambda_k^2\xi_{ki}\cdot \sum_{l}\alpha_l \xi_{lj}
+2 \sum_{k}\alpha_k \lambda_k\xi_{ki}\cdot  \sum_{l}\alpha_l \lambda_l\xi_{lj}
+  \sum_{k}\alpha_k \xi_{ki}\cdot  \sum_{l}\alpha_l \lambda_l^2\xi_{lj}.
\end{align*}
By \eqref{FSolCoef1} (with $n=0,1$), we have
\begin{align*}
a^{(1,1)}_1= 0,\quad  a^{(1,2)}_1=0,\quad  a^{(2,2)}_1=1;\quad a^{(1,1)}_2= 0, \quad a^{(1,2)}_2=1;\quad a^{(1,1)}_3=2.
\end{align*}
It follows that, omitting the identity $I_d$, the covariance matrix has the asymptotic form
\begin{align}\label{FSolCov0}
\Gamma
:=\begin{pmatrix}
\gamma_{11}(t) &  \gamma_{12}(t)\\
\gamma_{21}(t) & \gamma_{22}(t)
\end{pmatrix}
=\begin{pmatrix}
\frac23 t^3 + o(t^3) &  t^2 + o(t^2)\\
  t^2 + o(t^2) & 2t + o(t)
\end{pmatrix}
\end{align}
as $t\rightarrow 0+$. (We remark that this matrix has the same asymptotic behaviour at the starting time as of the covariance matrix  with $\theta=1$ in Kolmogorov's fundamental solutions, just as expected.) We then calculate the inverse of $\Gamma$
%\[
%\Gamma^{-1}
%=\begin{pmatrix}
%6t^{-3} + o(t^{-3}) &  3t^{-2} + o(t^{-2}) \\
%3t^{-2} + o(t^{-2}) & 2t^{-1} + o(t^{-1})
%\end{pmatrix}
%\]
and find the law of $(X_t,V_t)$ with $(X_0,V_0)=(0,0)$ is given by the following asymptotic form
\begin{align}\label{FSolApp}
G_0(x,v,t):=&\left(\frac{\sqrt{3}}{2\pi}\right)^{d} (t^{-2d} + o(t^{-2d}))
\exp\Bigg\{-\bigg[\frac{3|x|^2}{t^3 + o(t^3)}
- \frac{3x\cdot v}{t^2 + o(t^2)}
+ \frac{|v|^2}{t + o(t)}\bigg]\Bigg\}.
\end{align}
Note that those $o(t^{k})$'s above are indeed (different) functions depending only on $t$, in particular, independent of $x$ and $v$. %Note also there are different formulations as
%\[
%\frac{3|x|^2}{t^3 + o(t^3)}  - \frac{3x\cdot v}{t^2 + o(t^2)} + \frac{|v|^2}{t + o(t)}
%=\frac{3|x-vt/2|^2}{t^3 + o(t^3)}  + \frac{|v|^2}{4t + o(t)}
%=\frac{3|x|^2}{4t^3 + o(t^3)}  + \frac{|v-3x/(2t)|^2}{t + o(t)}.
%\]
Moreover, the density $G(t,x,v)$ with respect to the invariant measure shares the very same asymptotic form in \eqref{FSolApp} (with different $o(t^k)$'s). One may reformulate $G(t,x,v)$ as
\begin{align*}
&G(t,x,v)= t^{-2d} H(t,x',v')\\
&H(t,x',v'):=\left(\frac{\sqrt{3}}{2\pi}\right)^{d} (1 + o(1))
\exp\Bigg\{-\bigg[\frac{3|x'|^2}{1 + o(1)}
- \frac{3x'\cdot v'}{1 + o(1)}
+ \frac{|v'|^2}{1 + o(1)}\bigg]\Bigg\}
\end{align*}
as $t\rightarrow 0+$, where the $o(1)$'s are again functions of $t$, and the variables are defined by
\[
x':= t^{-\frac32} x, \quad v':=t^{-\frac12} v.
\]
So $\nabla^i_x\nabla^j_v G= t^{-2d-(3i+j)/2} \nabla^i_{x'}\nabla^j_{v'}H$, and $\dd x'\dd v'= t^{-2d}\dd x\dd v$.
We thus conclude
\begin{align}
t^{2d+3i+j} \int |\nabla^i_x\nabla^j_vG(t,x,v)|^2 \dd\mu(x,v)
&=
Z'\int |\nabla^i_{x'}\nabla^j_{v'} H(t,x',v')|^2 e^{-\frac{\omega_0^2 t^3}{2}|x'|^2 - \frac{t}{2}|v'|^2} \dd x' \dd v'\notag\\
&\label{FSolLim0}
\longrightarrow Z'\int |\nabla^i_{x'}\nabla^j_{v'} H(0,x',v')|^2  \dd x' \dd v'
\end{align}
with $Z'= \omega_0^{d} (2\pi)^{-d}$, as $t\rightarrow 0+$, where the convergence is guaranteed by Lebesgue dominated convergence theorem with the help of the structure of $\nabla^i_{x'}\nabla^j_{v'} H(x',v',t)$. A simple observation is that  the above limit is non-zero. % (one may use the fact that a polynomial does not vanish everywhere unless it is the zero polynomial).
\begin{rmq}One may find that this convergence relation \eqref{FSolLim0} holds for general initial datum $\delta_{(x_0,v_0)}$ as well, by a translation in the variables. It is also clear that  the invariant measure $\mu$ therein can be replaced by the Lebesgue measure, while $G$ replaced by $G_0$. By a similar argument of the analysis below, the exponent $(2d+3i+j)/2$ might be seen as a possible optimal exponent in the regularity estimates with $L^1$ initial data, which is consistent with the conjectured exponents in \cite[Section A.21, Remark A.16]{Villani}.
\end{rmq}

From \eqref{FSolLim0}, it is easy to conclude that the exponents in the regularity estimates \eqref{EqThm} in Theorem \ref{thmReg} are sharp in short time. For a detailed justification, one may argue as follows. Suppose there exist a constant $C$ and a function $\varphi(t)\in C((0,t_1], \RR_+)$ such that for any solution $h_t$ to \eqref{Eq-kFP*} with initial datum $h_0\in L^2(\mu)$,
\begin{equation}
||\nabla_x^{i}\nabla_v^{j}h_t||^2_{L^2(\mu)}\leq C\varphi(t) ||h_0||^2_{L^2(\mu)}, \quad \mbox{ for } 0<t\leq t_1.
\end{equation}
Then, by \eqref{FSolLim0}, there exists sufficiently small $t_2>0$ such that for $0<s\leq t_2$
\begin{align*}
s^{2d+3i+j}||\nabla_x^{i}\nabla_v^{j}G(s,x,v)||^2_{L^2(\mu)}
&\geq
\frac12 Z'||\nabla_{x'}^{i}\nabla_{v'}^{j}H(0,x,v)||^2_{L^2(\dd x'\dd v')},\\
s^{2d}||G(s,x,v)||^2_{L^2(\mu)}
&\leq 2Z'||H(0,x',v')||^2_{L^2(\dd x'\dd v')}.
\end{align*}
Now consider $h_t=G(\varepsilon+t,x,v)$ with initial datum $h_0= G(\varepsilon,x,v)\in L^2(\mu)$. By the assumption above, it holds for $0<\varepsilon,t\leq  \min\{t_1,t_2\}/2$ that
\begin{align*}
C\varphi(t)
%&\geq\frac{||\nabla_x^{i}\nabla_v^{j}G(\varepsilon+t,x,v)||^2_{L^2(\mu)}}{||G(\varepsilon,x,v)||^2_{L^2(\mu)}}\\
\geq\frac{||\nabla_x^{i}\nabla_v^{j}h_t||^2_{L^2(\mu)}}{||h_0||^2_{L^2(\mu)}} &= \frac{\varepsilon^{2d}}{(t+\varepsilon)^{2d+3i+j}} \cdot
\frac{(t+\varepsilon)^{2d+3i+j}||\nabla_x^{i}\nabla_v^{j}G(\varepsilon+t,x,v)||^2_{L^2(\mu)}}{\varepsilon^{2d}||G(\varepsilon,x,v)||^2_{L^2(\mu)}}\\
&\geq \frac{\varepsilon^{2d}}{(t+\varepsilon)^{2d+3i+j}}\cdot \frac14
\frac{||\nabla_{x'}^{i}\nabla_{v'}^{j}H(0,x,v)||^2_{L^2(\dd x'\dd v')}}{||H(0,x',v')||^2_{L^2(\dd x'\dd v')}}.
\end{align*}
We then take $\varepsilon=t$ to deduce that
\begin{equation}
\varphi(t)\geq ct^{-(3i+j)}, \quad \forall t\in (0,\min\{t_1,t_2\}/2]
\end{equation}
for some constant $c>0$. That is, the exponent $(3i+j)/2$ is optimal in the short time regularity estimates.

\section{Appendix B: A technical lemma}
\label{SectTechLem}
The following observation might be helpful in several situations: \emph{a real-valued lower-triangular matrix $S= (s_{ij})_{1\leq i,j\leq N}$, with positive diagonal elements, is positive in the sense of quadratic forms whenever there exists constants  $ \{k_{ij}\geq0 \,|\, 1\leq i,j\leq N, i\neq j\}$ such that $\sum_{j:j\neq i} k_{ij}\leq 1$ and
	\[
	|s_{ij}|^2 \leq 4 k_{ij}s_{ii} k_{ji}s_{jj}, \mbox{ for all } i>j.
	\]}
Recall we may assume that $M\geq 1$.

\begin{lem}\label{LemApp1}For $b>0$, $b= \frac{a}{64M}$ and $c=\frac{a}{1024M^2}$, we have
	\[
	S_w:=\begin{pmatrix}
	b         & 0      &0\\
	-b\sqrt{M} & a   &0\\
	-2c\sqrt{M}    & -2b  &c
	\end{pmatrix}
	\geq \text{Diag}(\frac12b, \frac14 a, \frac14 c)
	\]
	in the sense of quadratic forms.
\end{lem}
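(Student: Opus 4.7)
The plan is to reduce the matrix inequality to a statement about quadratic forms, and then to decompose the difference $S_w - \mathrm{Diag}(b/2, a/4, c/4)$ into three elementary $2\times 2$ positive semi-definite forms, one for each off-diagonal cross term. Since only the symmetric part of $S_w$ contributes to the quadratic form $x^{\mathsf T} S_w x$, the inequality to verify is
\[
\tfrac{b}{2} x_1^2 + \tfrac{3a}{4} x_2^2 + \tfrac{3c}{4} x_3^2 - b\sqrt{M}\, x_1 x_2 - 2c\sqrt{M}\, x_1 x_3 - 2b\, x_2 x_3 \geq 0, \quad \forall (x_1,x_2,x_3)\in\RR^3.
\]

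The next step is to split each diagonal coefficient into two pieces, one piece reserved to absorb each off-diagonal term in which that variable appears. Concretely, writing $\tfrac{b}{2} = \tfrac{b}{4} + \tfrac{b}{4}$, $\tfrac{3a}{4} = \tfrac{a}{4} + \tfrac{a}{2}$, and $\tfrac{3c}{4} = \tfrac{c}{4} + \tfrac{c}{2}$, the expression above regroups as
\[
\bigl[\tfrac{b}{4} x_1^2 - b\sqrt{M}\, x_1 x_2 + \tfrac{a}{4} x_2^2\bigr]
+\bigl[\tfrac{b}{4} x_1^2 - 2c\sqrt{M}\, x_1 x_3 + \tfrac{c}{4} x_3^2\bigr]
+\bigl[\tfrac{a}{2} x_2^2 - 2b\, x_2 x_3 + \tfrac{c}{2} x_3^2\bigr].
\]
Each bracket is a two-variable quadratic form, and it suffices to check that each has non-positive discriminant. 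This is where the explicit scaling $b = a/(64M)$ and $c = a/(1024M^2)$ is used: the first bracket reduces to $b M \leq a/4$ (and indeed $bM = a/64$), the second to $cM \leq b/16$ (and indeed $cM = a/(1024 M) = b/16$, with equality), and the third to $b^2 \leq ac/4$ (and indeed $b^2 = a^2/(4096 M^2) = ac/4$, again with equality).

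There is essentially no obstacle here beyond bookkeeping: the split of the diagonal is forced by the requirement that each of the three off-diagonal terms gets its own dedicated share of the two diagonal entries it couples, and the tightness $cM = b/16$ and $b^2 = ac/4$ in the last two brackets confirms that the choice of constants $b = a/(64M)$, $c = a/(1024 M^2)$ is made precisely so that the second and third discriminant conditions hold with equality, leaving the first condition with room to spare. Summing the three non-negative brackets yields the lemma.
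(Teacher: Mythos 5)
Your proof is correct and, up to repackaging, identical to the paper's. The paper first states a general observation about lower-triangular matrices — that $S$ is positive as a quadratic form whenever one can choose weights $k_{ij}\geq 0$ with $\sum_{j\neq i}k_{ij}\leq 1$ so that each off-diagonal entry satisfies $|s_{ij}|^2 \leq 4\,k_{ij}s_{ii}\,k_{ji}s_{jj}$ — and then applies it to the matrix with shifted diagonal $(\tfrac12 b,\tfrac34 a,\tfrac34 c)$ with the choices $k_{12}=k_{13}=\tfrac12$, $k_{21}=k_{31}=\tfrac13$, $k_{23}=k_{32}=\tfrac23$. Unwinding that abstract framework gives exactly your explicit split of the diagonal into $\tfrac{b}{4}+\tfrac{b}{4}$, $\tfrac{a}{4}+\tfrac{a}{2}$, $\tfrac{c}{4}+\tfrac{c}{2}$ and the same three discriminant checks; you have simply written out the decomposition directly rather than invoking the general observation.
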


\begin{proof}
It is equivalent to the positiveness of the $3$-by-$3$ matrix
\[
\begin{pmatrix}
\frac12 b           & 0            &0\\
-b\sqrt{M}      & \frac34 a           &0\\
-2c\sqrt{M}     & -2b   &\frac34 c
\end{pmatrix}
\]
in the sense of quadratic forms. In this case we may take $k_{12}=k_{13}=\frac12$, $k_{21}=\frac13$, $k_{23}=\frac23$, $k_{31}=\frac13$, $k_{32}=\frac23$ and it is easy to verify the set of inequalities
\begin{align*}
	% \nonumber % Remove numbering (before each equation)
	&  b^2M \leq 4\times k_{12} \cdot\frac12 b \times k_{21}\cdot \frac34 a = \frac14 ba , \\
	&  4c^2M \leq 4 \times k_{13}\cdot \frac12 b \times k_{31}\cdot \frac34 c=  \frac14 bc,\\
	& 4b^2 \leq 4\times k_{23} \cdot \frac34 a  \times k_{32} \cdot\frac34 c=ac.
\end{align*}
In fact, these inequalities are equivalent to
\[
4M b\leq a,\quad 16M c\leq b, \quad 4 b^2\leq ac
\]
which hold since $a=64Mb$, $b=16M c$.
\end{proof}

\textbf{Acknowledgement.} The author would like to thank his PhD supervisors, Prof.~Arnaud Guillin and Prof.~Liming Wu, for their support and encouragement of this work. Especially, he would like to express his appreciation for Prof.~Wu's great help in the redaction. He also would like to thank Prof.~Cl\'ement Mouhot for his valuable comments.

\bibliographystyle{plain}

\end{document}